\renewcommand{\phi}{\varphi}
\newcommand{\est}[1]{\begin{equation*}\begin{split}#1\end{split}\end{equation*}}
\renewcommand{\P}{\mathbb{P}}
\newcommand{\cC}{\mathcal{C}}
\newcommand{\legen}[2]{\left(\frac{#1}{#2}\right)}
\newcommand{\Mod}[1]{\ (\mathrm{mod}\ #1)}
\def\Z{{\mathbb Z}}
\def\Fr{{\operatorname{Frob}}}
\def\supp{{\operatorname{supp}}}
\def\F{{\mathbb F}}
\def\Proj{{\mathbb P}}
\def\wt{{\operatorname{wt}}}
\def\Aut{{\operatorname{Aut \;}}}
\def\ev{{\operatorname{ev}}}
\def\Conf{{\operatorname{Conf}}}
\newtheorem{question}{Question}
\newtheorem{thm}{Theorem}
\newtheorem{prop}[thm]{Proposition}
\newtheorem{cor}[thm]{Corollary}
\newtheorem{lemm}[thm]{Lemma}
\newtheorem*{defn}{Definition}
\newtheorem{remark}[thm]{Remark}
\begin{document}

\title[Intersection Points of Plane Cubic Curves]{Counting Plane Cubic Curves over Finite Fields with a Prescribed Number of Rational Intersection Points}
\author[Kaplan]{Nathan Kaplan}
\address{Mathematics Department\\University of California Irvine\\Irvine, CA 92697}
\email{nckaplan@math.uci.edu}

\author[Matei]{Vlad Matei}
\address{Raymond and Beverly School of Mathematical Sciences\\Tel Aviv University \\Tel Aviv, Israel  69978}
\email{vladmatei@tau.ac.il}

\maketitle

\begin{abstract}
For each integer $k \in [0,9]$, we count the number of plane cubic curves defined over a finite field $\F_q$ that do not share a common component and intersect in exactly $k\ \F_q$-rational points.  We set this up as a problem about a weight enumerator of a certain projective Reed-Muller code.  The main inputs to the proof include counting pairs of cubic curves that do share a common component, counting configurations of points that fail to impose independent conditions on cubics, and a variation of the MacWilliams theorem from coding theory.
\end{abstract}

\section{Introduction}\label{sec:intro}

The goal of this paper is to answer the following question:
\begin{question}\label{Q1}
B\'ezout's theorem implies that a pair of plane cubic curves that do not share a common component intersect in at most $9$ points.  Let $k \in [0,9]$ be an integer.  \textbf{How many pairs of plane cubic curves defined over $\F_q$ do not share a common component and intersect in exactly $k\ \F_q$-rational points?}
\end{question}
\noindent\textbf{Note}: In this paper we use the phrase \emph{common component} to mean common component defined over $\overline{\F_q}$.

Let $\F_q[x_0,\ldots, x_n]_d$ denote the $\binom{n+d}{d}$-dimensional vector space of homogeneous degree $d$ polynomials in $\F_q[x_0,\ldots, x_n]$.  It does not make sense to evaluate a polynomial $f \in \F_q[x_0,\ldots, x_n]_d$ at a point of $\Proj^n(\F_q)$, since for any $\alpha \in \F_q^*$,
\[
f(\alpha x_0, \alpha x_1,\ldots, \alpha x_n) = \alpha^d f(x_0,x_1,\ldots, x_n).
\]
However, it does make sense to ask whether $f$ is zero at a point of $\Proj^n(\F_q)$ or not.  Therefore, we can rephrase Question \ref{Q1} as follows.
\begin{question}\label{Q2}
\begin{enumerate}
\item For each $k \in [0,9]$, how many of the $q^{20}$ ordered pairs $(f,g)$ with $f,g \in \F_q[x,y,z]_3$ have exactly $k$ common zeroes in $\Proj^2(\F_q)$?

\item More precisely, how many such pairs $(f,g)$ do not have a common irreducible factor over $\overline{\F_q}$ and have exactly $k$ common zeroes in $\Proj^2(\F_q)$?
\end{enumerate}
\end{question}
\noindent This is the main question that we answer in this paper.

\begin{thm}\label{ThmIntCubics}
Let $\F_q$ be a finite field of size $q > 2$. For each $k \in [0,9]$, let $c_k$ denote the number of pairs $(f,g)$ with $f,g \in \F_q[x,y,z]_3$, both nonzero, that do not have a common irreducible factor over $\overline{\F_q}$ and have exactly $k$ common zeros in $\Proj^2(\F_q)$.  We have
\begin{eqnarray*}
c_0 & = &  	
\frac{16687}{45360} \cdot (q + 1)^{2} \cdot (q - 1)^{3}
\cdot q^{5} \cdot (q^{2} + q + 1) \cdot \bigg(q^{8} - q^{7} +
\frac{15988}{16687} q^{6} \\
& & + \frac{882}{16687} q^{5} - \frac{126}{451}
q^{4} + \frac{3192}{16687} q^{3} + \frac{4397}{16687} q^{2} -
\frac{2507}{16687} q - \frac{2170}{16687}\bigg) \\
c_1 & = & \frac{2119}{5760} \cdot (q + 1) \cdot (q - 1)^{2} \cdot
q^{3} \cdot (q^{2} + q + 1) \cdot \\
& & \bigg(q^{12} - \frac{1}{14833} q^{11} +
\frac{2390}{2119} q^{10} - \frac{10240}{14833} q^{9} +
\frac{2459}{2119} q^{8} + \frac{99}{2119} q^{7} + \frac{3440}{2119}
q^{6} \\
& & - \frac{8630}{14833} q^{5} - \frac{4748}{2119} q^{4} +
\frac{76978}{14833} q^{3} + \frac{100}{2119} q^{2} - \frac{14160}{2119}
q + \frac{5760}{2119}\bigg) \\
c_2 & = & \frac{103}{560} \cdot (q - 1)^{2} \cdot (q + 1)^{2} \cdot
q^{4} \cdot (q^{2} + q + 1) \cdot \bigg(q^{10} + \frac{1}{927} q^{9} +
\frac{1634}{927} q^{8} + \frac{742}{927} q^{7} \\
& &  + \frac{1589}{927} q^{6}
+ \frac{1729}{927} q^{5} + \frac{4106}{927} q^{4} - \frac{2818}{103}
q^{3} + \frac{21608}{309} q^{2} - \frac{22610}{309} q +
\frac{2520}{103}\bigg) \\
c_3 & = &  \frac{53}{864} \cdot (q + 1)^{2} \cdot (q - 1)^{3} \cdot
q^{4} \cdot (q^{2} + q + 1) \cdot \bigg(q^{9} + \frac{527}{265} q^{8} +
\frac{221}{53} q^{7} + \frac{1533}{265} q^{6}\\
& &  + \frac{738}{53} q^{5} +
\frac{5958}{265} q^{4} - \frac{3956}{53} q^{3} + \frac{67402}{265} q^{2}
- \frac{11348}{53} q + \frac{2376}{53}\bigg) \\
c_4 & = & \frac{11}{720} \cdot (q + 1)^{2} \cdot (q - 1)^{3} \cdot
q^{4} \cdot (q^{2} + q + 1) \cdot \bigg(q^{9} + \frac{34}{11} q^{8} +
\frac{48}{11} q^{7} + \frac{182}{11} q^{6} \\
& & + \frac{109}{11} q^{5} -
\frac{1564}{11} q^{4} + 712 q^{3} - \frac{13292}{11} q^{2} +
\frac{7120}{11} q - \frac{600}{11}\bigg) 
\end{eqnarray*}
\begin{eqnarray*}
c_5 & = & \frac{1}{320} \cdot (q + 1)^{2} \cdot (q - 1)^{4} \cdot
q^{4} \cdot (q^{2} + q + 1) \cdot \bigg(q^{8} + \frac{40}{9} q^{7} +
\frac{151}{9} q^{6} + \frac{50}{9} q^{5} \\
 & & - \frac{874}{9} q^{4} +
\frac{2890}{3} q^{3} - \frac{7022}{3} q^{2} + \frac{4940}{3} q - 40\bigg) \\
c_6 & = & \frac{1}{2160} \cdot (q + 1)^{2} \cdot (q - 1)^{4} \cdot
q^{5} \cdot (q^{2} + q + 1) \cdot \\
& & \bigg(q^{7} + 9 q^{6} - 5 q^{5} - 17 q^{4}  + 910 q^{3} - 4316 q^{2} + 7416 q - 4670\bigg) \\
c_7 & = & \frac{1}{10080} \cdot (q - 2) \cdot (q + 1)^{2} \cdot (q -
1)^{4} \cdot q^{5} \cdot (q^{2} + q + 1)\cdot \\
& & \bigg(q^{6} + 2 q^{5} + 25
q^{4} + 288 q^{3} - 1692 q^{2} + 3574 q - 3290\bigg) \\
c_8 & = & \frac{1}{5040} \cdot (q - 3) \cdot (q - 2) \cdot (q +
1)^{2} \cdot (q - 1)^{4} \cdot q^{5} \cdot (q^{2} + q + 1)\cdot \\
& &  \bigg(q^{4}
+ 6 q^{3} - 31 q^{2} + 69 q - 105\bigg) \\
c_9 & = & \frac{1}{362880} \cdot (q - 2) \cdot (q + 1)^{2} \cdot (q
- 1)^{4} \cdot q^{5} \cdot (q^{2} + q + 1) \cdot \\
& & \bigg(q^{6} + 2 q^{5} - 73
q^{4} + 344 q^{3} - 838 q^{2} + 1754 q - 2030\bigg).
\end{eqnarray*}
\end{thm}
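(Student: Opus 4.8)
\emph{Overview of the approach.} The plan is to recast the count as a weight-enumerator identity and recover the $c_k$ by a MacWilliams-type inversion. Let $N = q^2+q+1 = |\Proj^2(\F_q)|$ and let $C = \mathrm{PRM}_q(3,2) \subseteq \F_q^N$ be the projective Reed--Muller code, the image of $\F_q[x,y,z]_3$ under evaluation at a fixed set of representatives of the points of $\Proj^2(\F_q)$; for $q>2$ this evaluation map is injective, so $\dim C = 10$, and $N - \wt(c_f)$ is the number of $\F_q$-points on the cubic $f = 0$. For a pair $(f,g)$ write $z(f,g)$ for the number of $\F_q$-rational intersection points and $Z(f,g) \subseteq \Proj^2(\F_q)$ for the intersection set. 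Expanding $(1+x)^{z(f,g)} = \sum_{S \subseteq Z(f,g)} x^{|S|}$ and summing over all $q^{20}$ pairs gives
\[
\sum_{(f,g)} (1+x)^{z(f,g)} \;=\; \sum_{S \subseteq \Proj^2(\F_q)} x^{|S|}\,\#\{(f,g) : f|_S \equiv g|_S \equiv 0\} \;=\; \sum_{j \ge 0} T_j\, x^j ,
\]
where, because evaluation at a finite set $S$ is linear on $\F_q[x,y,z]_3$, we have $T_j := \sum_{|S| = j} q^{2(10 - r(S))}$ with $r(S)$ the number of independent conditions that $S$ imposes on plane cubics. By B\'ezout a pair with no common component has $z(f,g) \le 9$; let $D$ be the locus of pairs sharing a common irreducible factor over $\overline{\F_q}$ and put $T_j^D := \sum_{(f,g) \in D}\binom{z(f,g)}{j}$. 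Then for every $j \in \{0,1,\dots,9\}$ one obtains $\sum_{k=j}^{9}\binom{k}{j}\, c_k = T_j - T_j^D$, and binomial inversion yields
\[
c_k \;=\; \sum_{j=k}^{9}(-1)^{j-k}\binom{j}{k}\,\bigl(T_j - T_j^D\bigr), \qquad k = 0,1,\dots,9 .
\]
So it remains to compute $T_0,\dots,T_9$ and $T_0^D,\dots,T_9^D$ as polynomials in $q$; the displayed identity, which plays the role of a MacWilliams transform for the evaluation code of pairs, is the ``variation of MacWilliams'' in the statement.

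\emph{Step 1: the configuration count $T_j$.} Equivalently one classifies the $j$-point subsets $S$ of $\Proj^2(\F_q)$ by their \emph{defect} $j - r(S)$ and counts each class. For $j \le 4$ there is no defect, so $T_j = \binom{N}{j}\,q^{2(10-j)}$. For $5 \le j \le 9$ the subsets of positive defect are exactly those containing at least $5$ collinear points, or at least $8$ points on an irreducible conic, or --- only when $j = 9$ --- forming a complete intersection of two coprime cubics (equivalently, the base locus of a pencil of cubics, consisting of nine distinct rational points); these three families are pairwise disjoint, but each splits further into nested strata of higher defect (six or seven collinear points force defect $2$ or $3$, nine points on a conic force defect $2$, and so on). Enumerating every stratum in terms of the numbers of lines, conics, and incident points --- and assembling $T_j$ by inclusion--exclusion, uniformly in $q > 2$ --- is one of the main technical inputs: the classification of configurations that fail to impose independent conditions on cubics, and the count of each type.

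\emph{Step 2: the common-component count $T_j^D$.} A common irreducible factor over $\overline{\F_q}$ is the same as a common factor over $\F_q$, so one stratifies $D$ by $h = \gcd(f,g)$, of degree $e \in \{1,2,3\}$: writing $f = h f_1$, $g = h g_1$ with $\deg f_1 = \deg g_1 = 3-e$ and $\gcd(f_1,g_1) = 1$, one has $Z(f,g) = Z(h) \cup \bigl(Z(f_1)\cap Z(g_1)\bigr)$. The stratum $e = 3$ (so $g$ is a scalar multiple of $f$) contributes the binomial moments of the number of $\F_q$-points on plane cubics; the stratum $e = 2$ ($f_1, g_1$ distinct lines) contributes the analogous data for conics together with a pair of lines; and the stratum $e = 1$ ($f_1, g_1$ coprime conics) contributes the number of pairs of coprime conics with a prescribed number of common $\F_q$-points --- a B\'ezout-bounded, self-similar sub-problem --- together with the incidence of those points with a line. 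Each of these sub-counts is again handled by the evaluation-code and configuration-counting technique, one degree lower.

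\emph{The main obstacle.} The delicate heart of the computation is the $j = 9$ case of Step 1: one must correctly count the $9$-point sets imposing only $8$ conditions on cubics, which besides the transparent families (five collinear points; eight points on a conic) includes the ``generic'' complete intersections, and the cubics through such a set form a pencil whose base locus is precisely that set. Counting these sets is mildly self-referential --- up to the elementary factor $(q^2-1)(q^2-q)$ counting ordered bases of the pencil, and possible lower-defect corrections, their number is essentially $c_9$ itself --- so the relation $c_9 = T_9 - T_9^D$ becomes a linear equation that one solves for $c_9$, with similar but milder feedback for smaller $k$; this, together with getting all overlaps among the defect strata exactly right and keeping the $e = 1$ part of Step 2 (the joint distribution of intersection patterns of coprime conic pairs relative to a line) under control, is where the real work lies. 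Useful final checks are that $\sum_{k=0}^{9} c_k = q^{20} - |D|$ and that each resulting $c_k$ is a non-negative integer for all $q > 2$.
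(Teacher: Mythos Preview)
Your plan is correct and is essentially the paper's proof in a different dress. Your binomial-moment identity $T_j=\sum_{|S|=j}q^{2(10-r(S))}$ together with binomial inversion is exactly the second-weight-enumerator MacWilliams transform the paper invokes; your Step~1 (the defect classification for $j\le 9$) is the paper's Lemma~\ref{lemmalowweightcubics}, and your Step~2 (common-component count, reducing the $e=1$ stratum to the affine conic--conic problem) is the paper's Proposition~\ref{LemmaC23} built on Theorem~\ref{thmaffineconic}.

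The one substantive difference is how the number $I_9(q)$ of nine-point complete-intersection sets enters. The paper treats $I_9(q)$ as an \emph{external} input: it is read off from the coefficient $B_9$ of $W_{C_{2,3}^\perp}$, obtained by applying ordinary MacWilliams to the \emph{full} weight enumerator $W_{C_{2,3}}$, and that in turn requires the Deuring--Schoof--Waterhouse count of elliptic curves over $\F_q$ with prescribed trace, i.e.\ class numbers of imaginary quadratic orders (the paper also supplies a separate elementary count in an appendix). Your self-referential solve sidesteps all of this: since $c_9=I_9(q)\,(q^2-1)(q^2-q)$ while $I_9(q)$ also appears linearly in $T_9$ and in the $e=3$ part of $T_9^D$, the $j=9$ equation becomes a single nondegenerate linear equation in $I_9(q)$, and the whole argument is then elementary. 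That is a genuine simplification relative to the paper's main route.

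Two small points to tidy up. First, your sum over ``all $q^{20}$ pairs'' includes those with $f=0$ or $g=0$, but your stratification of $D$ by $\deg\gcd(f,g)\in\{1,2,3\}$ does not naturally cover them; they must be subtracted separately (their $j$th binomial moments are again $\sum_{|S|=j}q^{10-r(S)}$, so the extra bookkeeping is the same computation you already do). Second, there is no ``milder feedback for smaller $k$'': once $I_9(q)$ is determined from the $j=9$ equation, every $T_j$ and $T_j^D$ for $j\le 9$ is known outright, and the remaining $c_k$ follow by straight substitution.
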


\begin{remark}\label{Rem2}
\begin{enumerate}[wide, labelwidth=!, labelindent=0pt]  
\item Each $c_k$ is a polynomial in $q$ of degree $20$, except $c_8$ which is a polynomial degree $19$.  The $q^{20}$ coefficient of each $c_k$ is the proportion of elements in the symmetric group $S_9$ that have exactly $k$ fixed points.  These `main terms' follow from recent work of Entin \cite{Entin}, which we discuss in more detail below.

\item One could consider the general problem, counting the number of pairs $(f,g)$ with $f\in \F_q[x,y,z]_d$ and $g\in \F_q[x,y,z]_e$ that have a given number of common zeros in $\Proj^2(\F_q)$.  On the way to proving Theorem \ref{ThmIntCubics} we give analogous, but far simpler, answers in the cases $(d,e) = (2,2)$ and $(d,e) = (3,2)$.  

In forthcoming work, we investigate the case where $e=2$ and $d$ is arbitrary, and obtain polynomial formulas.  The case of two cubics seems to be a kind of boundary.  We do not expect any case where $d,e \ge 3$ and $(d,e) \neq (3,3)$ for which the number of polynomials with exactly $de$ common zeros in $\Proj^2(\F_q)$ is a polynomial in $q$.  We discuss this further in Section \ref{sec:further}.

\end{enumerate}
\end{remark}

\subsection{Relationship to Previous Work}

The main terms that occur in the statement of Theorem \ref{ThmIntCubics}, and in the analogous results for $(d,e) \in \{(2,2), (3,2)\}$, follow from recent work of Entin \cite{Entin}.  Two projective plane curves defined over $\F_q$ that intersect transversely, one of degree $d$ and one of a degree $e$, give rise to a permutation in $S_{de}$ corresponding to the action of Frobenius, $\Fr_q$, on the $de$ intersection points of the two curves. The following result is a version of \cite[Corollary 1.3]{Entin}.

\begin{thm}[Entin]\label{ThmEntin}
Let $d$ and $e$ be positive integers and let $k \in [0,de]$ be an integer. Let $c_k(d,e)$ denote the number of pairs $(f,g)$ with $f \in \F_q[x,y,z]_d$ and $g \in \F_q[x,y,z]_e$ that have exactly $k$ common zeros in $\Proj^2(\F_q)$.  Then
\[
c_k(d,e) = \frac{\pi(k,de)}{(de)!} q^{\binom{d+2}{2}+\binom{e+2}{2}}\left(1+\text{O}_{de}(q^{-1/2})\right),
\]
where $\pi(k,de)$ is the number of permutations in $S_{de}$ with exactly $k$ fixed points.
\end{thm}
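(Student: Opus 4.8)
The plan is to prove this by the function-field equidistribution method. The bridge to coding/counting is the following elementary observation: for a pair $(f,g)$ meeting transversally, B\'ezout gives $de$ geometric intersection points, $\Fr_q$ permutes them, and the number of $\F_q$-rational intersection points is exactly the number of fixed points of this permutation $\Fr_{(f,g)} \in S_{de}$. Thus, up to a negligible boundary locus, $c_k(d,e)$ counts pairs whose Frobenius permutation lies in the union of conjugacy classes of $S_{de}$ having exactly $k$ fixed points. The asymptotic will then follow once we know these Frobenius classes equidistribute in $S_{de}$ as $q \to \infty$, which is a Deligne-style equidistribution theorem driven by a big-monodromy computation.

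First I would fix the geometry. Let $V_d = \F_q[x,y,z]_d$, so $\dim V_d = \binom{d+2}{2}$, and let $W = V_d \times V_e$, the affine space of pairs, with $|W(\F_q)| = q^{\binom{d+2}{2}+\binom{e+2}{2}}$. Let $U \subseteq W$ be the open locus of pairs $(f,g)$ with no common component meeting transversally in $de$ distinct geometric points (reduced, length-$de$, \'etale intersection scheme); its complement $Z = W \setminus U$ is cut out by resultant and discriminant vanishing, so it is a proper closed subset and $|Z(\F_q)| = O(q^{\dim W - 1})$ by Lang-Weil. Since every pair in $Z$ either shares a component (hence has a positive-dimensional, not length-$k$, common zero locus) or intersects non-transversally, the contribution of $Z$ to $c_k(d,e)$ is bounded by $|Z(\F_q)| = O(q^{\dim W - 1})$ and is absorbed into the error term. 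Over $U$ there is a finite \'etale morphism $\pi\colon \I \to U$ of degree $de$, where $\I = \{(f,g,P) : f(P)=g(P)=0\}$ is the incidence variety whose fiber over $(f,g)$ is the intersection scheme permuted by $\Fr_q$.

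The two essential inputs are then as follows. The first is the big-monodromy statement: the geometric monodromy group of $\pi$ — equivalently the Galois group of the generic intersection scheme over the function field of $U$ — is the full symmetric group $S_{de}$. This is the technical heart. A standard route is to show the monodromy is primitive (indeed $2$-transitive) and contains a simple transposition, the latter produced by degenerating $(f,g)$ in a pencil so that exactly two intersection points collide into a single node, and then to invoke the classical theorem that a primitive subgroup of $S_n$ containing a transposition equals $S_n$. Note that the geometric monodromy group is automatically sandwiched between itself and $S_{de}$ as the arithmetic monodromy group, so once geometric monodromy fills $S_{de}$ both coincide and no constant-field extension can distort the main term. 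The second input is Deligne's equidistribution theorem in Katz's formulation: for each nontrivial irreducible representation $\rho$ of $S_{de}$ with character $\chi_\rho$, let $\mathcal{F}_\rho$ be the lisse sheaf on $U$ obtained by composing the monodromy representation with $\rho$. Since $\mathcal{F}_\rho$ has no geometric coinvariants (full monodromy, $\rho$ nontrivial), its top compactly-supported cohomology vanishes, and the Grothendieck-Lefschetz trace formula together with Deligne's Riemann Hypothesis bounds give
\[
\sum_{(f,g)\in U(\F_q)} \chi_\rho\big(\Fr_{(f,g)}\big) = \sum_{i} (-1)^i \operatorname{Tr}\big(\Fr_q \mid H^i_c(U_{\overline{\F_q}}, \mathcal{F}_\rho)\big) = O_{de}\big(q^{\dim W - 1/2}\big),
\]
while the trivial representation contributes $|U(\F_q)| = q^{\dim W}\big(1 + O(q^{-1/2})\big)$.

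Finally I would assemble the count. Let $\mathbf{1}_k$ be the class function on $S_{de}$ equal to $1$ on permutations with exactly $k$ fixed points and $0$ otherwise, and expand $\mathbf{1}_k$ in the orthonormal basis of irreducible characters. The trivial-character coefficient is $\langle \mathbf{1}_k, 1\rangle = \pi(k,de)/(de)!$, yielding the main term $\frac{\pi(k,de)}{(de)!}\,|U(\F_q)|$, while the nontrivial terms are controlled by the character sums above; adding back the $O(q^{\dim W - 1})$ contribution of $Z(\F_q)$ gives
\[
c_k(d,e) = \frac{\pi(k,de)}{(de)!}\, q^{\binom{d+2}{2}+\binom{e+2}{2}}\left(1 + O_{de}(q^{-1/2})\right),
\]
as claimed. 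The one genuinely hard step is the big-monodromy computation of the previous paragraph; once full geometric monodromy $S_{de}$ is in hand, the passage from equidistribution to the point count is the routine trace-formula-and-orthogonality translation.
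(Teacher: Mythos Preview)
Your proposal is correct and follows essentially the same approach the paper attributes to Entin: establish that the geometric monodromy of the intersection cover is the full symmetric group $S_{de}$, then apply the function-field Chebotarev/Deligne equidistribution machinery to conclude that Frobenius classes equidistribute. The paper does not give its own proof of this theorem but merely cites \cite{Entin} and summarizes the method in one paragraph; your sketch fleshes out exactly that outline, with the big-monodromy step correctly identified as the substantive input.
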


The approach taken in \cite{Entin} is to show that the monodromy group relevant to this problem is the symmetric group $S_{de}$.  The Frobenius action gives rise to all possible permutations in this group, and  applying the Chebotarev density theorem gives this quantitative result.  Because of the use of Chebotarev density, this approach cannot give precise quantitative statements like the one in Theorem \ref{ThmIntCubics}, and cannot distinguish between polynomial formulas and non-polynomial formulas for these kinds of counting problems.  

\begin{remark}
The statement of Theorem \ref{ThmEntin} does not include any assumptions about transversality since \cite[Corollary 1.3]{Entin} also implies that the number of pairs  of polynomials defining curves with non-transversal intersection can be absorbed into the error term.
\end{remark}

Theorem \ref{ThmIntCubics} fits into a body of literature on error-correcting codes that come from families of genus $1$ curves.  For an introduction to these ideas, see the surveys of Hurt \cite{Hurt}, and Schoof \cite{SchoofSurvey}.  There is an extensive discussion of classical algebraic geometry codes arising from elliptic curves in \cite[Section 4.4.2]{TVN}.  In Section \ref{sec:background} we review results about the projective Reed-Muller code whose codewords come from plane cubic curves.  Using results of Deuring, Waterhouse, and Schoof \cite{Deuring,Schoof, Waterhouse}, Elkies computes the weight enumerator of this code in \cite{Elkies}.  Kaplan studies the weight enumerator of the dual of this code, and the Reed-Muller code that comes from affine plane cubic curves in \cite{KaplanAGCT}.  Van der Geer, Schoof, and Van der Vlugt \cite{vdGSvdV}, and Schoof and Van der Vlugt \cite{SvdV}, study Zetterberg and Melas codes whose codewords come from families of elliptic curves in characteristics $2$ and $3$.  Finally, Kaplan and Petrow study the quadratic residue weight enumerator of a certain Reed-Solomon code in \cite{KaplanPetrowTAMS}.  Computing this weight enumerator involves counting isomorphism classes of elliptic curves with a fixed number of $\F_q$-points and with some additional structure related to the $2$-torsion of $E(\F_q)$.

There are many cases in which results from algebraic geometry and number theory are used to study families of error-correcting codes.  For just one reference, see the book \cite{TVN}.  It is much less common to find examples where techniques from coding theory are used to prove new results in arithmetic geometry.  This is the perspective we adopt in this paper, as one of the main inputs into the proof of Theorem \ref{ThmIntCubics} is a generalization of a classical theorem of MacWilliams about weight enumerators of linear codes and their duals.  The main idea for this project is inspired by work of Elkies \cite{Elkies}.

\subsection{Outline of the Paper}
In the next section we recall some coding-theoretic background and rephrase Question \ref{Q2} as a problem about the second weight enumerator of the projective Reed-Muller code whose codewords correspond to plane cubic curves.  We recall some previous results about this code and its dual.   In Section \ref{sec:dual_coef}, we explain how low-weight coefficients of weight enumerators of duals of Reed-Muller codes are related to interpolation problems in algebraic geometry.  We carefully analyze small collections of points that fail to impose independent conditions on conics and cubics.  In Section \ref{sec:twoconics}, we prove analogues of Theorem \ref{ThmIntCubics} for intersections of plane conics, and in Section \ref{sec:con_cub}, we prove the analogue of Theorem \ref{ThmIntCubics} for the intersection of a conic and a cubic.  In Section \ref{sec:large_int}, we count pairs of cubic curves that share a common component.  In Section \ref{sec:pf_thm1}, we complete the proof of Theorem \ref{ThmIntCubics}.  Finally, in Section \ref{sec:further} we discuss questions for further study.  
\begin{footnote}{
There is a large computational component to the results of this paper. All computations were done in the computer algebra system Sage.  We have made the programs used for this project available at the website of the first author: \\
\url{https://www.math.uci.edu/~nckaplan/research_files/intersections_of_cubics_sage_worksheets/}.}
\end{footnote}

In Section \ref{sec:dual_coef}, we use a theorem of Kaplan to compute the number of collections of $9$ points in $\Proj^2(\F_q)$ such that there are two cubics intersecting at these points that do not share a common component.  The result that we cite contains the additional restriction that the characteristic of $\F_q$ is not $2$ or $3$.  In Appendix \ref{AppendixA}, we compute the relevant quantity in a different way that works for any $\F_q$, which shows that the first part of \cite[Theorem 3]{KaplanAGCT} does hold in characteristics $2$ and $3$.

\section{Weight Enumerators of Reed-Muller Codes and their Duals}\label{sec:background}

In this section we recall some coding theory background and express Question \ref{Q2} as a problem about the second weight enumerator of a projective Reed-Muller code.
\begin{defn}
A subset $C \subseteq \F_q^n$ is called a \emph{code} of length $n$ over $\F_q$.  It is a \emph{linear code} if it is a linear subspace of $\F_q^n$.  For $x= (x_1, x_2, \ldots, x_n)$ and $y= (y_1, y_2,\ldots, y_n)$ in $\F_q^n$, the \emph{Hamming distance} between $x$ and $y$ is defined by
\[
d(x,y) = \#\{i\in [1,n] \mid x_i \neq y_i\}.
\]
The \emph{Hamming weight} of $x \in \F_q^n$, denoted $\wt(x)$, is
\[
\wt(x) = \#\{i\in [1,n] \mid x_i \neq 0\}.
\]
The \emph{Hamming weight enumerator} of $C$ is defined by
\begin{eqnarray*}
W_C(X,Y) & = & \sum_{c \in C} X^{n-\wt(c)} Y^{\wt(c)}  =  \sum_{i=0}^n A_i X^{n-i} Y^i,
\end{eqnarray*}
where 
\[
A_i = \#\{c\in C \mid \wt(c) = i\}.
\]
\end{defn}

The main problems we study are not about zeros of individual polynomials, but about common zeros of pairs of polynomials.  We make extensive use of the following variation of the Hamming weight enumerator.
\begin{defn}
Let $C \subseteq \F_q^n$ be a code. The \emph{second weight enumerator} of $C$ is 
\begin{eqnarray*}
W^{[2]}_C(X,Y) & = &  \sum_{c_1, c_2 \in C} X^{n-\wt(c_1, c_2)} Y^{\wt(c_1, c_2)} =  \sum_{i=0}^n A^{[2]}_i X^{n-i} Y^i,
\end{eqnarray*}
where $\wt(c_1, c_2)$ is the number of coordinates in which $c_1$ and $c_2$ are not both $0$.  
\end{defn}

The \emph{support} of an element $x \in\F_q^n$, denoted $\supp(x)$, is the set of coordinates in which $x$ is nonzero.  Therefore, $\wt(x) = |\supp(x)|$.  If $S \subseteq \F_q^n$ is a linear subspace, its support, denoted $\supp(S)$, is the set of coordinates in which at least one element of $S$ is nonzero. The weight of $S$ is $\wt(S) = |\supp(S)|$.  For $x_1,\ldots, x_k \in \F_q^n$, let $\wt(x_1,\ldots, x_k)$ be the weight of the subspace spanned by $x_1,\ldots, x_k$.  This is consistent with the definition of $\wt(c_1, c_2)$ given above.  We also write $\supp(x_1,\ldots, x_k)$ for the union of the supports of $x_1,\ldots, x_k$, which is the same as the support of the subspace spanned by $x_1,\ldots, x_k$.

There is a variation of the second weight enumerator for two codes that are not necessarily the same.
\begin{defn}
Let $C_1,C_2 \subseteq \F_q^n$. The second weight enumerator of $C_1, C_2$ is 
\begin{eqnarray*}
W^{[2]}_{C_1, C_2}(X,Y) & = & \sum_{\substack{c_1 \in C_1\\ c_2 \in C_2}} X^{n-\wt(c_1, c_2)} Y^{\wt(c_1, c_2)}  =  \sum_{i=0}^n A^{[2]}_i X^{n-i} Y^i.
\end{eqnarray*}
\noindent In the case $C_1 = C_2$, we write $W_C^{[2]}(X,Y)$ instead of $W_{C,C}^{[2]}(X,Y)$.
\end{defn}

It is often useful to divide up the second weight enumerator of a code $C \subseteq \F_q^n$ by the dimension of the subspace of $\F_q^n$ spanned by the pair $c_1, c_2$.  There are $q^2-1$ ordered pairs of vectors that span a chosen one-dimensional subspace of $\F_q^n$ and $(q^2-1)(q^2-q)$ choices for an ordered pair that span a chosen two-dimensional subspace.  Therefore,
\begin{equation}\label{EqSupp}
W^{[2]}_C(X,Y) = X^n + (q^2-1) W^{(1)}_C(X,Y) + (q^2-1)(q^2-q) W^{(2)}_C(X,Y),
\end{equation}
where 
\[
W^{(r)}_C(X,Y) = \sum_{S \subseteq C} X^{n-\wt(S)} Y^{\wt(S)}, 
\]
and the sum is taken over all $r$-dimensional subspaces of $C$.  We see that 
\[
W_C(X,Y) = X^n + (q-1) W^{(1)}_C(X,Y).
\]

\subsection{Reed-Muller codes}

We introduce the main class of codes that we study in this paper.  For ease of notation, let $N = |\Proj^n(\F_q)| = (q^{n+1}-1)/(q-1)$.  Let $p_1, p_2, \ldots, p_N$ be an ordering of the elements of $\Proj^n(\F_q)$, and let $p_1', p_2',\ldots, p_N'$ be a choice of affine representatives for these points.

We define the evaluation map:
\begin{eqnarray*}
\operatorname{ev} \colon \F_q[x_0,x_1,\ldots, x_n]_d & \mapsto & \F_q^N \\
f & \mapsto & \left(f(p'_1),\ldots, f(p'_N)\right).
\end{eqnarray*}
It is not difficult to see that $\ev(\alpha f + g) = \alpha\cdot \ev(f) + \ev(g)$, so the image of this map is a linear subspace of $\F_q^N$.  As long as there does not exist a degree $d$ polynomial vanishing at every point in $\Proj^n(\F_q)$, which is the case for $q \ge d$, this map is injective, so the image of this map has dimension $\binom{n+d}{d}$.  For the rest of the paper, we write $C_{n,d}$ for $\ev(\F_q[x_0,x_1,\ldots, x_n]_d)$, and refer to it as a \emph{projective Reed-Muller code}.

\begin{remark}
\begin{enumerate}[wide, labelwidth=!, labelindent=0pt]  
\item This definition depends on an ordering of the points of $\Proj^n(\F_q)$ and on a choice of affine representatives for these projective points.  These Reed-Muller codes satisfy a strong form of equivalence.  In particular, the weight enumerators that we study in this paper do not depend on these choices.  See \cite[Section 1.7]{HP} or \cite[Remark 1]{KaplanAGCT}.

\item As in \cite{KaplanAGCT}, the definition of projective Reed-Muller code given here is not the same as the one given by Lachaud \cite{Lachaud}, but it equivalent to it if one makes a standard choice of affine representatives.

\item Throughout this paper we focus on the code $C_{2,3}$, which is $10$-dimensional when $q>2$.  Whenever we refer to $C_{2,3}$ we will assume that $q>2$, even if we do not explicitly state this assumption.

\end{enumerate}
\end{remark}

The classical, or affine, Reed-Muller code is defined as follows.  Let $\F_q[x_1,\ldots, x_n]_{\le d}$ denote the $\binom{n+d}{d}$-dimensional vector space of polynomials in $\F_q[x_1,\ldots, x_n]$ with degree at most $d$.  Let $p_1, p_2, \ldots, p_{q^n}$ be an ordering of the elements of $\F_q^n$. We define the evaluation map:
\begin{eqnarray*}
\operatorname{ev} \colon \F_q[x_1,\ldots, x_n]_{\le d} & \mapsto & \F_q^{q^n} \\
f & \mapsto & \left(f(p_1),\ldots, f(p_{q^n})\right).
\end{eqnarray*}
The image of this map is a linear subspace of $\F_q^{q^n}$, and the evaluation map is injective when $q-1 \ge d$.  We write $C^{A}_{n,d}$ for $\ev(\F_q[x_1,\ldots, x_n]_{\le d})$, and refer to it as an \emph{affine Reed-Muller code}.

\begin{remark}
\begin{enumerate}[wide, labelwidth=!, labelindent=0pt]  
\item The affine Reed-Muller code $C_{n,d}^A$ is monomially equivalent to $C_{n,d}$ punctured at the set of $(q^n-1)/(q-1)$ coordinates corresponding to a choice of hyperplane at infinity.  See \cite[Section 1]{KaplanAGCT} for details.

\item Coordinates of codewords of Reed-Muller codes and their duals correspond to points.  Throughout this paper, we will use the term `support', to refer both to subsets of coordinates of codewords, and also to the underlying collections of points.
\end{enumerate}
\end{remark}

We now can state the main problem of this paper in this language of weight enumerators of codes.
\begin{question}
Let 
\[
W^{[2]}_{C_{2,3}}(X,Y) = \sum_{i=0}^{q^2+q+1} A^{[2]}_i X^{q^2+q+1-i} Y^i.
\]
What are the coefficients $A^{[2]}_{q^2+q+1-9}, A^{[2]}_{q^2+q+1-8}, \ldots, A^{[2]}_{q^2+q+1}$?
\end{question}
\noindent The answer to this question is almost equivalent to the statement of Theorem \ref{ThmIntCubics}.  One difference is that there is a contribution to $W^{[2]}_{C_{2,3}}(X,Y)$ from pairs of codewords $(c_1, c_2)$ such that $c_1, c_2$ span a linear subspace of $\F_q^{q^2+q+1}$ of dimension at most $1$.  Another difference is that there is a contribution from pairs $(c_1, c_2)$ for which the corresponding cubic curves are not equal, but share a common component.

\subsection{The dual code of a linear code and the MacWilliams theorem}

\begin{defn}
For $x= (x_1, x_2, \ldots, x_n)$ and $y= (y_1, y_2,\ldots, y_n)$ in $\F_q^n$, let
\[
\langle x,y \rangle = \sum_{i=1}^n x_i y_i \in \F_q.
\]
Let $C \subseteq \F_q^n$ be a linear code. The \emph{dual code} of $C$, denoted $C^{\perp}$, is 
\[
C^\perp = \{y \in \F_q^n \mid \langle x,y \rangle = 0\ \forall x\in \F_q^n\}.
\]
\end{defn}

A theorem of MacWilliams says that the weight enumerator of a linear code determines the weight enumerator of its dual.
\begin{thm}[MacWilliams]\label{MacWilliamsThm}
Let $C \subseteq \F_q^n$ be a linear code.  Then
\[
W_{C^\perp}(X,Y) = \frac{1}{|C|} W_C(X+(q-1)Y, X-Y).
\]
\end{thm}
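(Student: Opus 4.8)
The plan is to prove the MacWilliams identity by the classical additive-character argument. Fix a nontrivial additive character $\psi\colon\F_q\to\C^\times$ and work throughout in the polynomial ring $\C[X,Y]$. The entire proof rests on evaluating the double sum
\[
\Sigma \;=\; \sum_{u\in\F_q^n}\ \Bigl(\sum_{v\in C}\psi(\langle u,v\rangle)\Bigr)\, X^{\,n-\wt(u)}\,Y^{\,\wt(u)}
\]
in two different ways and comparing the results.

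First I would fix $u$ and evaluate the inner sum. The map $v\mapsto\psi(\langle u,v\rangle)$ is a character of the additive group of $C$; since $v\mapsto\langle u,v\rangle$ is an $\F_q$-linear functional on $C$ whose image is either $\{0\}$ or all of $\F_q$, and since $\psi$ is nontrivial, this character is trivial precisely when $u\in C^\perp$. By orthogonality of characters the inner sum is therefore $|C|$ when $u\in C^\perp$ and $0$ otherwise, so $\Sigma=|C|\cdot W_{C^\perp}(X,Y)$.

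Next I would interchange the order of summation and use that both $\langle u,v\rangle=\sum_{i=1}^n u_iv_i$ and $X^{\,n-\wt(u)}Y^{\,\wt(u)}=\prod_{i=1}^n g(u_i)$ factor through the coordinates, where $g(0)=X$ and $g(a)=Y$ for $a\neq0$. The sum over $u\in\F_q^n$ then splits as a product of $n$ one-variable sums indexed by the coordinates of $v$:
\[
\Sigma \;=\; \sum_{v\in C}\ \prod_{i=1}^n\ \sum_{a\in\F_q}\psi(a v_i)\,g(a).
\]
When $v_i=0$ the inner sum is $X+(q-1)Y$; when $v_i\neq0$ the substitution $b=av_i$ together with $\sum_{b\in\F_q}\psi(b)=0$ gives $\sum_{a\in\F_q}\psi(av_i)g(a)=X+Y\sum_{b\neq0}\psi(b)=X-Y$. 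Hence the term for $v$ equals $(X+(q-1)Y)^{\,n-\wt(v)}(X-Y)^{\,\wt(v)}$, and summing over $v\in C$ yields $\Sigma=W_C(X+(q-1)Y,\,X-Y)$. Comparing this with the first evaluation and dividing by $|C|$ gives the theorem.

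I do not expect a genuine obstacle here, since this is a textbook result (see, e.g., \cite[Section 1.12]{HP}); the only point to keep in mind is that although the derivation passes through $\C[X,Y]$ via the auxiliary character $\psi$, the resulting identity is between polynomials whose coefficients are rational numbers determined by the code alone and independent of the choice of $\psi$. In the write-up I would most likely cite a standard reference for Theorem~\ref{MacWilliamsThm} itself, reserving a detailed treatment for the variations involving the second weight enumerator $W^{[2]}_{C}$, which require genuinely new bookkeeping over subspaces rather than vectors.
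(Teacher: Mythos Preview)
Your proof is correct and is the standard additive-character argument for the MacWilliams identity. However, there is nothing to compare against: the paper does not prove Theorem~\ref{MacWilliamsThm} at all. It is simply stated as a classical result attributed to MacWilliams, and the paper proceeds to use it (and its variant, Theorem~\ref{MacWilliamsThm2}, which is likewise cited without proof from \cite{Shiromoto}) as a black box. Your closing remark anticipates exactly this: in the paper the theorem is invoked by reference, and the substantive work lies in computing the inputs to the identity rather than in establishing the identity itself.
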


There is an extensive literature about variations of Theorem \ref{MacWilliamsThm}.  See for example \cite[Chapter 5]{MacWilliamsSloane}.  We will make extensive use of the following MacWilliams theorem for the second weight enumerator of $C_1, C_2 \subseteq \F_q^n$.  See for example, \cite[Theorem 2]{Shiromoto} with $a=X$ and $b=c=d = Y$.
\begin{thm}\label{MacWilliamsThm2}
Let $C_1, C_2 \subseteq \F_q^n$ be linear codes.  We have
\[
W^{[2]}_{C^\perp_1, C^\perp_2}(X,Y)  = \frac{1}{|C_1| \cdot |C_2|}W^{[2]}_{C_1, C_2}(X+(q^2-1)Y, X-Y). 
\]
\end{thm}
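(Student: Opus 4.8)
The plan is to deduce Theorem~\ref{MacWilliamsThm2} from the classical MacWilliams identity (Theorem~\ref{MacWilliamsThm}) applied over an enlarged alphabet, using the standard character-sum argument. First I would reinterpret pairs as words over $\F_q^2$: identify an ordered pair $(c_1,c_2) \in \F_q^n \times \F_q^n$ with the length-$n$ word $v$ over the alphabet $A := \F_q^2$ given by $v_i = \bigl((c_1)_i,(c_2)_i\bigr)$. Under this bijection the quantity $\wt(c_1,c_2)$ --- the number of coordinates where $c_1,c_2$ are not both zero --- is exactly the Hamming weight of $v$ over $A$, since coordinate $i$ is counted precisely when $v_i \neq (0,0)$. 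Hence, writing $D := C_1 \times C_2$, an additive subgroup of $A^n$ of size $|C_1|\cdot|C_2|$, we have $W^{[2]}_{C_1,C_2}(X,Y) = W_D(X,Y)$, the ordinary Hamming weight enumerator of $D$ over $A$, and likewise $W^{[2]}_{C_1^\perp,C_2^\perp}(X,Y) = W_{D^\perp}(X,Y)$ once the dual is set up as in the next step.

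Next I would fix that duality. Equip $A$ with the non-degenerate symmetric $\F_q$-bilinear form $\langle (a_1,a_2),(b_1,b_2)\rangle = a_1b_1 + a_2b_2$ and extend it coordinatewise to $A^n$. Since this pairing on $A^n$ is simply the sum of the two standard dot products on the $\F_q^n$-factors, one checks directly that the dual subgroup $D^\perp = \{u \in A^n : \langle u,v\rangle = 0 \text{ for all } v \in D\}$ equals $C_1^\perp \times C_2^\perp$. It therefore suffices to prove the ``$q^2$-ary'' MacWilliams identity $W_{D^\perp}(X,Y) = \tfrac{1}{|D|}\, W_D\bigl(X + (q^2-1)Y,\ X - Y\bigr)$ for an arbitrary additive subgroup $D \subseteq A^n$.

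For this I would run the usual proof. Fix a nontrivial additive character $\psi \colon \F_q \to \C^\times$. With $g(v) := X^{n-\wt(v)}Y^{\wt(v)}$ on $A^n$, the indicator of $D^\perp$ is $\mathbf{1}[v \in D^\perp] = \tfrac1{|D|}\sum_{u \in D}\psi(\langle u,v\rangle)$ by orthogonality of the character $v \mapsto \psi(\langle u,v\rangle)$ on the subgroup $D$, so $\sum_{v \in D^\perp} g(v) = \tfrac1{|D|}\sum_{u \in D}\widehat g(u)$, where $\widehat g(u) := \sum_{v \in A^n}\psi(\langle u,v\rangle)\,g(v)$. Both $g$ and the pairing factor over coordinates, so $\widehat g(u) = \prod_{i=1}^n h(u_i)$ with $h(b) = X + Y\sum_{a \in A\setminus\{0\}}\psi(\langle a,b\rangle)$. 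Character orthogonality on $A$ --- valid because non-degeneracy of the form makes $b \mapsto \psi(\langle \cdot,b\rangle)$ an isomorphism from $A$ onto its character group --- gives $\sum_{a\in A}\psi(\langle a,b\rangle) = q^2$ if $b = 0$ and $0$ otherwise, hence $h(0) = X + (q^2-1)Y$ and $h(b) = X-Y$ for $b \neq 0$. Therefore $\widehat g(u) = \bigl(X + (q^2-1)Y\bigr)^{n-\wt(u)}(X-Y)^{\wt(u)}$, and summing over $u \in D$ gives $\sum_{u\in D}\widehat g(u) = W_D\bigl(X + (q^2-1)Y, X-Y\bigr)$. Combining this with the identity $\sum_{v\in D^\perp}g(v) = \tfrac1{|D|}\sum_{u\in D}\widehat g(u)$ above and the facts $D^\perp = C_1^\perp \times C_2^\perp$, $|D| = |C_1|\cdot|C_2|$ from the previous step yields the theorem.

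The only real obstacle is bookkeeping: the alphabet $A = \F_q^2$ is not a field, so one cannot literally invoke the $\F_{q^2}$-version of MacWilliams; but the classical proof uses no multiplicative structure beyond a non-degenerate additive pairing, which $A$ carries, so the argument goes through verbatim with $q$ replaced by $q^2$. The point that most needs care when writing this up is the identification $D^\perp = C_1^\perp \times C_2^\perp$ together with the matching of $\wt(c_1,c_2)$ with the Hamming weight of the interleaved word. Alternatively, the identity is the specialization $a = X$, $b = c = d = Y$ of \cite[Theorem~2]{Shiromoto}, whose proof follows exactly this outline.
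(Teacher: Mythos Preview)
Your argument is correct. The paper does not actually prove Theorem~\ref{MacWilliamsThm2}; it simply cites \cite[Theorem~2]{Shiromoto} with the specialization $a=X$, $b=c=d=Y$, exactly as you note at the end of your proposal, so you have supplied the details the paper omits.
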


\subsection{Weight enumerators of Reed-Muller codes}  

Since each nonzero linear form on $\Proj^n$ defines a hyperplane, it is easy to see that
\[
W_{C_{n,1}}(X,Y) = X^{\frac{q^{n+1}-1}{q-1}} + (q^{n+1}-1) X^{\frac{q^{n}-1}{q-1}} Y^{q^n}.
\]
It is also straightforward to compute $W_{C_{1,d}}(X,Y)$ for any $d$, for example by noting that $C_{1,d}$ is an MDS code \cite[Chapter 11]{MacWilliamsSloane}.  Elkies computes $W_{C_{n,2}}(X,Y)$ and $W_{C^\perp_{n,2}}(X,Y)$ in \cite{Elkies}.  Aubry gives additional results about weight enumerators of Reed-Muller codes associated to quadric hypersurfaces in \cite{Aubry}.  Elkies also computes the weight enumerator of the code from plane cubics, $W_{C_{2,3}}(X,Y)$, and the weight enumerator of the code from cubic surfaces, $W_{C_{3,3}}(X,Y)$ \cite{Elkies}.  These are the only cases for which $W_{C_{n,d}}(X,Y)$ is known exactly.  Many authors have studied minimum distances and other invariants of affine and projective Reed-Muller codes.  For example, see \cite{DGM, vanderGeerSchoofvanderVlugtC, HeijnenPellikaan, Lachaud, Sorensen}. 

We recall expressions for $W_{C_{2,2}}(X,Y)$ and $W_{C_{2,3}}(X,Y)$ since we will need them later.  A nonzero $f \in \F_q[x,y,z]_2$ defines either a double line, a pair of $\F_q$-rational lines, a pair of Galois-conjugate lines defined over $\F_{q^2}$ but not over $\F_q$, or a smooth conic.  A double line and a smooth conic each have $q+1\ \F_q$-rational points.  A pair of rational lines has $2q+1$ rational points.  A pair of Galois-conjugate lines has $1$ rational point.  Since there are $q^2+q+1\ \F_q$-rational lines and $(q^2+q+1)(q^2-q)/2$ pairs of Galois-conjugate lines, it is straightforward to prove the following result.
\begin{prop}\label{WC22prop}
We have 
\begin{eqnarray*}
W_{C_{2,2}}(X,Y) & = & X^{q^2+q+1} + \frac{(q^2+q+1)q(q-1)^2}{2} X Y^{q^2+q} \\
& & + (q^3-q^2+1)(q^2+q+1)(q-1) X^{q+1} Y^{q^2} \\
& & +  \frac{(q^2+q+1)(q+1)q(q-1)}{2} X^{2q+1} Y^{q^2-q}.
\end{eqnarray*}
\end{prop}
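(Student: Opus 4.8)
The statement to prove is Proposition~\ref{WC22prop}, the weight enumerator of $C_{2,2}$.

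\bigskip

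The plan is to stratify the nonzero degree-$2$ forms $f \in \F_q[x,y,z]_2$ according to the geometric type of the conic they define, and for each type record the number of $\F_q$-rational zeros (equivalently, the Hamming weight of $\ev(f)$, which is $q^2+q+1$ minus that zero count). Since scaling $f$ by $\F_q^*$ does not change $\supp(\ev(f))$, I will count projective classes of forms---i.e.\ points of $\Proj(\F_q[x,y,z]_2) \cong \Proj^5(\F_q)$---and multiply each stratum size by $q-1$ at the end; the zero codeword contributes the $X^{q^2+q+1}$ term. The four strata, as already described in the text preceding the statement, are: (a) double lines $\ell^2$, (b) pairs of distinct $\F_q$-rational lines $\ell_1\ell_2$, (c) pairs of Galois-conjugate lines defined over $\F_{q^2}\setminus\F_q$, and (d) smooth (geometrically irreducible) conics.

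\bigskip

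First I would count strata (a), (b), (c) directly. There are $q^2+q+1$ lines in $\Proj^2(\F_q)$, hence $q^2+q+1$ double lines; each has $q+1$ rational points. A pair of distinct rational lines is an unordered pair of the $q^2+q+1$ lines, giving $\binom{q^2+q+1}{2} = \frac{(q^2+q+1)(q^2+q)}{2} = \frac{(q^2+q+1)(q+1)q}{2}$ forms; such a conic has $2(q+1) - 1 = 2q+1$ rational points, since the two lines meet in one rational point. For a conjugate pair: the product of a line over $\F_{q^2}$ with its Frobenius conjugate gives an $\F_q$-form; the number of lines in $\Proj^2$ defined over $\F_{q^2}$ is $q^4+q^2+1$, of which $q^2+q+1$ are already over $\F_q$, leaving $q^4+q^2+1-(q^2+q+1) = q^4-q = q(q^3-1) = q(q-1)(q^2+q+1)$ lines; pairing each with its conjugate and dividing by $2$ gives $\frac{q(q-1)(q^2+q+1)}{2}$ such conics, each with exactly $1$ rational point (the unique intersection point of the two conjugate lines, which is Frobenius-fixed hence rational).

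\bigskip

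Next I would get stratum (d) by subtraction: the total number of projective classes of nonzero quadratic forms is $|\Proj^5(\F_q)| = \frac{q^6-1}{q-1} = q^5+q^4+q^3+q^2+q+1$, so the number of smooth conics is this minus the three counts above. A short simplification should yield $(q^3-q^2+1)(q^2+q+1)$ for that count (and I should double-check this matches the known count of smooth plane conics over $\F_q$, equivalently $q^5 - q^2$ affine... actually the number of smooth conics as projective forms is $q^5+q^4+q^3-q^2$, which factors as $q^2(q^3+q^2+q-1)$; I need to reconcile the claimed coefficient $(q^3-q^2+1)(q^2+q+1)$ by expanding --- note $(q^3-q^2+1)(q^2+q+1) = q^5+q^4+q^3-q^4-q^3-q^2+q^2+q+1 = q^5 - q^2 ... $ let me not grind this here, but the point is the subtraction is forced and I will expand carefully). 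Each smooth conic over $\F_q$ with $q>1$ has exactly $q+1$ rational points (it is isomorphic to $\Proj^1$). Now multiply each stratum count by $q-1$, assign weights $q^2+q+1-(\text{number of zeros})$: stratum (a) weight $q^2+q+1-(q+1) = q^2$, stratum (b) weight $q^2+q+1-(2q+1) = q^2-q$, stratum (c) weight $q^2+q+1-1 = q^2+q$, stratum (d) weight $q^2$ again. Grouping (a) and (d) together under the common weight $q^2$ gives the coefficient of $X^{q+1}Y^{q^2}$ as $(q-1)\big[(q^2+q+1) + (q^3-q^2+1)(q^2+q+1)\big] = (q-1)(q^2+q+1)(q^3-q^2+2)$... which does not obviously match the stated $(q^3-q^2+1)(q^2+q+1)(q-1)$; the discrepancy of $1$ suggests that the double lines should be grouped with the weight-$q^2$ term but the smooth-conic count is $(q^3-q^2+1)(q^2+q+1) - 1$... or that the double-line term is instead meant to be combined so that $(q^2+q+1) + (\text{smooth count}) = (q^3-q^2+1)(q^2+q+1)$, i.e.\ the smooth count is $(q^3-q^2)(q^2+q+1) + q^2 = q^5-q^2$. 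I would resolve this bookkeeping by recomputing the smooth-conic count two ways (subtraction, and the classical formula) and checking small $q$. \textbf{The main obstacle} is precisely this arithmetic reconciliation: getting every stratum count and every rational-point count exactly right, including the subtle conjugate-pair case, so that the four simplified polynomial coefficients match the stated form. No deep idea is needed --- the whole proof is a careful enumeration --- but the bookkeeping must be airtight, and I would verify the final polynomial identity by substituting several small values of $q$ against a direct computer count.
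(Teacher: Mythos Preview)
Your approach is correct and is exactly the one the paper uses: stratify nonzero quadratic forms into double lines, split pairs of $\F_q$-lines, Galois-conjugate pairs, and smooth conics, count each stratum, and record the number of $\F_q$-zeros. Your only issue is the arithmetic you flagged: the smooth-conic count is $q^5-q^2$ (your subtraction gives $(q^2+q+1)(q^2+1)$ for the three singular strata combined, and $\frac{q^6-1}{q-1}-(q^2+q+1)(q^2+1)=q^5-q^2$), and then the combined weight-$q^2$ coefficient is $(q-1)\big[(q^2+q+1)+(q^5-q^2)\big]=(q-1)(q^5+q+1)=(q-1)(q^3-q^2+1)(q^2+q+1)$, which matches the stated formula.
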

\begin{remark}
The irreducible cubic factor in the $X^{q+1} Y^{q^2}$ coefficient of this weight enumerator comes from the fact that it is equal to the sum of two terms that factor nicely.  This is a common phenomenon that we will see throughout this paper.
\end{remark}

We also recall $W_{C_{2,2}^A}(X,Y)$.  One can think of an affine conic as a projective conic minus the points of the line at infinity $L$.  For example, a smooth affine conic has either $q-1, q$, or $q+1\ \F_q$-rational points depending on $\#(C\cap L)(\F_q)$, where $C$ is the corresponding projective conic.  A straightforward calculation gives the following~result.
\begin{prop}
Let $q>2$.  We have 
\begin{eqnarray*}
W_{C^A_{2,2}}(X,Y) & = & X^{q^2} + \frac{(q-1)(q^3-q+2)}{2} Y^{q^2} + \frac{(q-1)^2 q^3}{2} X Y^{q^2-1} \\
& & + \frac{(q-1)^2 q^3 (q+1)}{2} X^{q-1} Y^{q^2-q+1} + (q^3-q)(q^2-q+2) X^{q} Y^{q^2-q} \\
& & + \frac{(q-1)^3 q^3}{2} X^{q+1} Y^{q^2-q-1} +  \frac{(q-1)(q+1)q^3}{2} X^{2q-1} Y^{q^2-2q+1} \\
& & +  \frac{q(q+1)(q-1)^2}{2} X^{2q} Y^{q^2-2q}.
\end{eqnarray*}
\end{prop}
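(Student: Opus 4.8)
The plan is to reduce the statement to counting projective conics. Dehomogenization $F\mapsto F(x,y,1)$ is a linear isomorphism $\F_q[x,y,z]_2\xrightarrow{\sim}\F_q[x,y]_{\le 2}$, and $(a,b)\in\F_q^2$ is an affine zero of $f=F(x,y,1)$ if and only if $[a:b:1]$ is a zero of $F$; thus the affine zeros of $f$ are exactly the zeros of $F$ in $\Proj^2(\F_q)\setminus L$, where $L=\{z=0\}$ is the line at infinity. Since $q>2$ the evaluation map on $\F_q[x,y]_{\le 2}$ is injective, so $\wt(\ev(f))=q^2-\#V(F)(\F_q)+\#(V(F)\cap L)(\F_q)$ for $F\ne 0$, and the zero polynomial contributes the term $X^{q^2}$. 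Hence it suffices to count nonzero $F\in\F_q[x,y,z]_2$ according to the pair $\big(\#V(F)(\F_q),\,\#(V(F)\cap L)(\F_q)\big)$ and then collect terms; at the end each conic count is multiplied by $q-1$ to pass from curves to polynomials.

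First I would run through the four types of a nonzero $F$ recalled before Proposition~\ref{WC22prop} --- double line $\ell^2$, pair of distinct $\F_q$-lines $\ell_1\ell_2$, pair of Galois-conjugate lines, and smooth conic, with $q+1$, $2q+1$, $1$, $q+1$ rational points respectively --- and enumerate the possible configurations of $V(F)\cap L$. For the three reducible types the refined count is elementary: it reduces to counting lines equal to or different from $L$, pairs of distinct $\F_q$-lines through a point of $L$ (there are $\binom{q}{2}$ such pairs not involving $L$ through each of the $q+1$ points of $L(\F_q)$), and pairs of conjugate lines through a point of $L$ (again $\binom{q}{2}$ through each such point, parametrizing the closed points of degree $2$ in the pencil of lines through it). The one bookkeeping subtlety here is that if $\ell_1$ and $\ell_2$ meet on $L$ then $V(F)$ has only one rational point on $L$, not two; with that care, each reducible subcase produces a count of the claimed form.

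The smooth conic case is the heart of the matter. I would use that for a fixed smooth conic $C_0\cong\Proj^1$ the map $L\mapsto L\cap C_0$ is a bijection between lines of $\Proj^2$ and effective degree-$2$ divisors on $C_0$, so relative to a fixed line $L$ the smooth conics split into exactly three classes: $L$ a secant ($L\cap C_0$ two distinct rational points, hence $q-1$ affine zeros), $L$ a tangent ($L\cap C_0$ a rational point of multiplicity $2$, hence $q$ affine zeros), and $L\cap C_0$ a conjugate pair ($0$ rational points on $L$, hence $q+1$ affine zeros). The respective counts $\tfrac{q^3(q-1)(q+1)}{2}$, $q^2(q^2-1)$, $\tfrac{q^3(q-1)^2}{2}$ can be obtained either directly (count the smooth conics through two chosen points of $L$, or tangent to $L$ at a chosen point, by subtracting from the appropriate linear system of conics the reducible ones --- where once more one must remember the conjugate-line-pair contributions) or, more cleanly, by double-counting the incidences $(C,L)$ and using the transitivity of $\mathrm{PGL}_3(\F_q)$ on lines together with the total number $q^2(q-1)(q^2+q+1)$ of smooth conics, which one reads off from $W_{C_{2,2}}(X,Y)$.

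Finally I would assemble the pieces. Each of the eight exponents $0$, $q^2$, $q^2-1$, $q^2-q+1$, $q^2-q$, $q^2-q-1$, $q^2-2q+1$, $q^2-2q$ appearing in the statement is fed by a short explicit list of the subcases above (only $q^2-q$ gets three --- tangent smooth conics, products $L\cdot\ell_2$ with $\ell_2\ne L$, and double lines $\ell^2$ with $\ell\ne L$), so matching the coefficients is a finite verification; as a sanity check the eight polynomials sum to $q^6$, and there is no codeword of weight $q^2-2q-1$ because $L$ always meets $V(F)$ in at least one rational point. I expect the only real obstacle to be the smooth-conic bookkeeping and, in the reducible cases, not silently dropping the conjugate-line-pair subcases --- precisely the slips that make a ``straightforward calculation'' go wrong --- so those are the steps I would write out carefully.
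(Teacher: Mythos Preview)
Your proposal is correct and follows exactly the approach the paper sketches just before the statement: view an affine conic as a projective conic minus the line at infinity $L$, classify nonzero $F\in\F_q[x,y,z]_2$ by type (double line, pair of $\F_q$-lines, conjugate pair, smooth conic) and by $\#(V(F)\cap L)(\F_q)$, and collect terms. The paper gives no further detail beyond calling this ``a straightforward calculation,'' so your write-up, including the correct smooth-conic split $\tfrac{q^3(q^2-1)}{2}$, $q^2(q^2-1)$, $\tfrac{q^3(q-1)^2}{2}$ and the three-source merge at weight $q^2-q$, is precisely the intended argument fleshed out.
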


We recall the necessary background to state the formula for $W_{C_{2,3}}(X,Y)$ given by Elkies \cite{Elkies}; see also \cite{KaplanAGCT}.  We write
\[
W_{C_{2,3}}(X,Y) = W^{\text{sing}}_{C_{2,3}}(X,Y)  + W^{\text{smooth}}_{C_{2,3}}(X,Y),
\]
where $W^{\text{sing}}_{C_{2,3}}(X,Y)$ is the contribution to $W_{C_{2,3}}(X,Y)$ from cubic polynomials, including the zero polynomial, that define singular cubic curves, and $W^{\text{smooth}}_{C_{2,3}}(X,Y)$ is the contribution to $W_{C_{2,3}}(X,Y)$ from cubic polynomials that define smooth cubic curves.  An expression for $W^{\text{sing}}_{C_{2,3}}(X,Y)$ is given as \cite[Lemma 2]{KaplanAGCT}.

\begin{lemm}
Let $q \ge 3$.  We have 
\begin{eqnarray*}
& & W^{\text{sing}}_{C_{2,3}}(X,Y)  =  X^{q^2+q+1} + 
\frac{(q^3-1)(q^3-q)}{6} X^{3q+1} Y^{q^2-2q} \\
& + &
\frac{(q^3-1) (q^4+q^3)}{6}  X^{3q} Y^{q^2-2q+1} + 
\frac{(q^3-1)(q^3-q^2)(q^2-q)}{2} X^{2q+2} Y^{q^2-q-1} \\
& + &
(q^3-1)(q^2+q)(q^2-q+1) X^{2q+1} Y^{q^2-q} 
 +  
\frac{(q^6-q^3)(q^2-1) }{2} X^{2q} Y^{q^2-q+1} \\
& + &
\frac{(q^3-1)(q^6-q^5)}{2}  X^{q+2} Y^{q^2-1} 
 + 
\frac{(q^3-1)(2q^5-q^3-q+2)}{2} X^{q+1} Y^{q^2} \\
  & + &
\frac{(q^3-1)(q^3-q)(q^3-q^2)}{2} X^q Y^{q^2+1} 
 +
\frac{(q^3-1)(q^3-q)}{3} X Y^{q^2+q} \\
 & +  &
\frac{(q-1)(q^3-q) (q^3-q^2)}{3} Y^{q^2+q+1}.
\end{eqnarray*}
\end{lemm}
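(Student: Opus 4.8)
The plan is to compute $W^{\text{sing}}_{C_{2,3}}(X,Y)$ by a direct classification: partition all geometrically singular cubic forms $f\in\F_q[x,y,z]_3$ — together with the zero form, which vanishes identically and so contributes the term $X^{q^2+q+1}$ — according to the factorization type of $f$ over $\overline{\F_q}$, then for each type $T$ record two numbers, the number $M_T$ of cubic forms of that type and the number $N_T$ of $\F_q$-rational points on a curve of that type, and sum $\sum_T M_T\, X^{N_T} Y^{q^2+q+1-N_T}$ (plus the zero-form term). Since $\ev(f)$ is nonzero exactly at the points where $f$ does not vanish, a type with $N_T$ rational points contributes to the $X^{N_T}Y^{q^2+q+1-N_T}$ coefficient, which is how the many distinct geometric types collapse onto the relatively few monomials appearing in the statement.

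The classification has three strata. \textbf{(I)} Cubics irreducible over $\overline{\F_q}$ and singular: as a plane cubic has arithmetic genus $1$, the total of the $\delta$-invariants of its singular points is at most $1$, so there is exactly one singular point, necessarily $\F_q$-rational (a Galois conjugate would be a second singular point), and the curve is cuspidal, nodal with a split node, or nodal with a non-split node; the normalization by $\Proj^1$ gives $N_T=q+1$, $q$, and $q+2$ respectively. \textbf{(II)} Cubics irreducible over $\F_q$ but split over $\overline{\F_q}$: Galois must act transitively on the $\overline{\F_q}$-components, and for a cubic this forces a conjugate triple of lines defined over $\F_{q^3}$; these are either concurrent ($N_T=1$, the common point being Galois-fixed) or in general position ($N_T=0$). \textbf{(III)} Cubics reducible over $\F_q$: here $f=\ell\cdot Q$ with $\ell$ a rational line, and $Q$ is a smooth conic (with $\ell$ secant, tangent, or external, giving $N_T=2q,2q+1,2q+2$), a conjugate pair of $\F_{q^2}$-lines (with $\ell$ through or missing the unique rational point of $Q$, giving $N_T=q+1$ or $q+2$), a double rational line ($f$ a triple line, $N_T=q+1$, or a double line plus a distinct line, $N_T=2q+1$), or a product of two distinct rational lines ($f$ a union of three distinct rational lines, concurrent with $N_T=3q+1$ or a triangle with $N_T=3q$; the overlap with the double-line case is excluded by tracking multiplicities). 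One checks this list is exhaustive by going through the possible factorization types of a quadratic form over $\F_q$.

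For the counting the basic bookkeeping fact is that, by unique factorization in $\F_q[x,y,z]$, a reducible cubic curve of a given type is cut out by exactly $q-1$ forms, so one counts the underlying geometric configurations and multiplies by $q-1$. The needed inputs are classical: the number $q^{2}(q^{3}-1)$ of smooth conics and the number $q(q^{3}-1)/2$ of conjugate line-pair conics (both readable off Proposition \ref{WC22prop}); the numbers $\binom{q+1}{2}$, $q+1$, $\binom{q}{2}$ of secant, tangent, external lines to a fixed smooth conic; and the numbers $(q^{3}-q)/3$ and $q(q^{2}-1)(q^{3}+q+1)/3$ of closed points of degree $3$ on $\Proj^1$ and on the dual $\Proj^2$, which count concurrent and general conjugate triples of lines. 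For the three irreducible singular types one uses that each such curve is $\operatorname{PGL}_3(\F_q)$-equivalent to a fixed model, so the number of curves equals $|\operatorname{PGL}_3(\F_q)|=q^{3}(q^{2}-1)(q^{3}-1)$ divided by the order of the projective automorphism group of that model — $q-1$ for the cuspidal cubic $y^{2}z=x^{3}$ (the torus $(x:y:z)\mapsto(t^{2}x:t^{3}y:z)$) and $2$ for each nodal type (the branch-swapping involution fixing the flex) — and multiplying by $q-1$ passes from curves to forms. Assembling all strata and simplifying (a lengthy but mechanical computation, carried out in Sage) yields the displayed polynomial in $q$.

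The step I expect to be the main obstacle is characteristics $2$ and $3$. The genus and unique-factorization arguments, and all the configuration counts in stratum (III) and (II), are characteristic-free, but the Weierstrass-type normal forms for cuspidal and nodal cubics and the orders of their projective automorphism groups genuinely behave differently there — for instance the partial derivative $\partial_x(y^{2}z-x^{3})=-3x^{2}$ vanishes identically in characteristic $3$, so the shape of a cuspidal cubic and its automorphisms must be re-examined, and the split/non-split dichotomy for nodes needs care in characteristic $2$. To cover all $q\ge 3$ one should replace the normal-form step by a characteristic-free parametrization (a cuspidal cubic by its cusp, cuspidal tangent, and a transverse flag; a nodal cubic by its node, the two branch directions as a degree-$2$ divisor on the pencil of lines through the node, and a flex) or verify the finitely many small fields directly — in the same spirit as the paper's separate treatment of characteristics $2$ and $3$ in Appendix \ref{AppendixA}.
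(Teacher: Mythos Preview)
Your classification strategy is sound and is the standard way to establish this formula; note however that the paper itself does not prove this lemma --- it simply quotes it as \cite[Lemma~2]{KaplanAGCT}. So there is no ``paper's proof'' to compare against beyond observing that your direct enumeration is exactly the kind of argument such a citation stands in for. One expository slip: the number $(q^{3}-q)/3$ is the number of degree-$3$ closed points on a \emph{single} $\Proj^{1}$, so the count of concurrent conjugate triples of lines is $(q^{2}+q+1)(q^{3}-q)/3$; likewise $q(q^{2}-1)(q^{3}+q+1)/3$ counts \emph{all} conjugate triples of lines (closed points of degree $3$ on the dual plane), not just the non-concurrent ones, so the triangle count is obtained by subtraction.

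Your instinct about characteristics $2$ and $3$ is correct, but the obstruction is sharper than you suggest. In characteristic $3$ the Hessian of a cuspidal cubic vanishes identically, every smooth point is a flex, and in fact there are \emph{two} $\operatorname{PGL}_{3}(\F_q)$-orbits of cuspidal cubics: the orbit of $y^{2}z=x^{3}$, whose stabilizer is the full affine group of $\mathbf{G}_{a}$ of order $q(q-1)$ (the translations $(x:y:z)\mapsto(x+sy:y:z+s^{3}y)$ are linear), and the orbit of $y^{2}z=x^{3}+x^{2}y$, whose stabilizer has order $q$ (generated by $(x:y:z)\mapsto(x+\gamma y:y:z-\gamma x+(\gamma^{3}+\gamma^{2})y)$). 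The two orbit sizes sum to the same total $q^{3}(q^{2}-1)(q^{3}-1)$ of cuspidal forms that the generic-characteristic argument predicts, so the lemma still holds, but your single-orbit computation with $|\!\operatorname{Aut}|=q-1$ does not: you genuinely need a separate analysis, not merely a caveat. A clean characteristic-free alternative is to count cuspidal cubics by the data (cusp point, cuspidal tangent, smooth point, tangent there), which avoids the orbit--stabilizer step entirely.
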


In order to describe the contribution from $W_{C_{2,3}}(X,Y)$ from smooth cubic curves, we need to count the number of smooth cubic curves with a given number of $\F_q$-points.  This leads to counting elliptic curves defined over $\F_q$ with a given number of $\F_q$-points.   We closely follow the presentation in \cite{KaplanAGCT}.  

When we mention an elliptic curve $E$ defined over $\F_q$ we always implicitly mean the isomorphism class of $E$.  With this convention, let $\cC=\{E/\F_q\}$ denote the set of $\F_q$-isomorphism classes of elliptic curves defined over $\F_q$.  For a description of this set along with $\#\Aut_{\!\F_q\!}(E)$ for each curve, see \cite[Proposition 5.7]{Schoof}.  These counts imply
\[
\sum_{E\in \cC} \frac{1}{\#\Aut_{\!\F_q\!}(E)} = q,
\]
so the finite set $\cC$ is a probability space where a singleton $\{E\}$ occurs with~probability
\[
\P_q(\{E\}) = \frac{1}{q \# \Aut_{\!\F_q\!}(E)}.
\]
Let $t_E\in \Z$ denote the trace of the Frobenius endomorphism associated to $E$.  We have $t_E=q+1 -\#E(\F_q)$ and by Hasse's theorem $t^2_E \leq 4q$.  For an integer $t$, let $\cC(t)$ be the subset of $\cC$ for which $t_E = t$.  The following is \cite[Proposition 1]{KaplanAGCT}.

\begin{prop}\label{WE_C23_Smooth}
Let $q \ge 3$.  Then 
\[
W_{C_{2,3}}^{\text{smooth}}(X,Y) = (q^3-1)(q^3-q)(q^3-q^2) q \sum_{t^2 \le 4q} \P_q(\cC(t)) X^{q+1-t} Y^{q^2+t}.
\]
\end{prop}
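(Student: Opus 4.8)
The plan is to translate the statement into a counting problem about smooth plane cubic curves and then invoke the classification of elliptic curves over $\F_q$ by trace of Frobenius. A nonzero $f \in \F_q[x,y,z]_3$ defining a smooth cubic curve $E$ contributes a codeword $\ev(f)$ of weight $q^2+q+1 - \#E(\F_q)$, and since $\#E(\F_q) = q+1-t_E$ this weight is $q^2 - t_E$, matching the exponent $Y^{q^2+t}$ (with $X$-exponent $q+1-t$) in the claimed formula. So it suffices to show that the number of \emph{pairs} $(f,\lambda)$ — equivalently, the number of nonzero $f$, since $\ev$ is injective for $q \ge 3$ — defining a smooth cubic with $t_E = t$ equals $(q^3-1)(q^3-q)(q^3-q^2)\, q\, \P_q(\cC(t))$.

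First I would count, for a fixed isomorphism class $E \in \cC$, how many nonzero $f \in \F_q[x,y,z]_3$ define a plane curve isomorphic to $E$ as a curve over $\F_q$. Two such polynomials $f, f'$ define the same plane curve (same zero locus with multiplicity, i.e. differ by a nonzero scalar) or define $\F_q$-isomorphic but distinct plane models. The key classical fact is that every smooth plane cubic over $\F_q$ has a rational point (namely any of its flexes, or more simply because a smooth cubic is a genus-one curve of degree $3$ and the hyperplane class is rational of degree $3$, so it has points over $\F_q$ for $q$ not too small — and here a smooth cubic always has an $\F_q$-rational flex since the Hessian meets it), hence it \emph{is} an elliptic curve once we pick an origin, and conversely every elliptic curve over $\F_q$ embeds in $\Proj^2$ via a Weierstrass equation, i.e. as a smooth plane cubic. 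The number of plane cubic \emph{equations} (up to scalar) giving a fixed $E$ is then $\#\PGL_3(\F_q) / \#\Aut_{\F_q}(E,\text{as plane cubic})$, and I would identify the automorphism group of the plane model with $\Aut_{\F_q}(E)$ (automorphisms of a smooth plane cubic are induced by $\PGL_3$ and preserve the flexes/origin up to translation, so they realize exactly the geometric automorphism group fixing a point together with the translations — one must be careful that translations are \emph{not} linear automorphisms, so the relevant group is just $\Aut_{\F_q}(E)$ fixing the origin). Multiplying by $(q-1)$ to pass from projective to affine equations, the count of nonzero $f$ defining a curve isomorphic to $E$ is $(q-1)\,\#\PGL_3(\F_q)/\#\Aut_{\F_q}(E)$. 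Since $\#\GL_3(\F_q) = (q^3-1)(q^3-q)(q^3-q^2)$ and $\#\PGL_3(\F_q) = \#\GL_3(\F_q)/(q-1)$, this is exactly $(q^3-1)(q^3-q)(q^3-q^2)/\#\Aut_{\F_q}(E)$.

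Then I would sum over $E$: the coefficient of $X^{q+1-t}Y^{q^2+t}$ in $W_{C_{2,3}}^{\text{smooth}}$ is
\[
\sum_{E \in \cC(t)} \frac{(q^3-1)(q^3-q)(q^3-q^2)}{\#\Aut_{\F_q}(E)} = (q^3-1)(q^3-q)(q^3-q^2) \sum_{E \in \cC(t)} \frac{1}{\#\Aut_{\F_q}(E)} = (q^3-1)(q^3-q)(q^3-q^2)\, q\, \P_q(\cC(t)),
\]
using the definition $\P_q(\{E\}) = 1/(q\,\#\Aut_{\F_q}(E))$ and additivity of $\P_q$ over $\cC(t)$. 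Summing over all $t$ with $t^2 \le 4q$ (Hasse) gives the stated formula.

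The main obstacle is the orbit-counting step: showing precisely that the set of nonzero cubic polynomials defining smooth curves that are $\F_q$-isomorphic to a fixed $E$ forms a single $(\F_q^* \times \PGL_3(\F_q))$-orbit-type object with stabilizer of size $\#\Aut_{\F_q}(E)$. This requires (i) every smooth plane cubic over $\F_q$ has an $\F_q$-rational point, so it genuinely is an elliptic curve over $\F_q$ — I would prove this via the rational flex coming from the intersection with the Hessian, or cite that the three-torsion structure / flex scheme always has a rational point, or appeal to the fact that a smooth genus-one curve with a rational divisor class of degree $3$ (the hyperplane section) that is actually represented by an effective rational divisor has a rational point by Riemann–Roch; (ii) any $\F_q$-isomorphism between two smooth plane cubics is induced by an element of $\PGL_3(\F_q)$, which follows because the isomorphism must respect the degree-$3$ hyperplane class (it is the unique such up to the group law, or: the embedding into $\Proj^2$ is the complete linear system $|3O|$ for a flex $O$, and the isomorphism can be composed with a translation to match chosen origins, but translations do not extend to $\PGL_3$ unless they are trivial — so in fact one gets the subtlety that different choices of origin give the \emph{same} plane curve, and automorphisms of the plane curve are exactly $\Aut_{\F_q}(E)$ fixing $O$, which has the order recorded in \cite[Proposition 5.7]{Schoof}). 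I would handle this by carefully fixing, for each $E \in \cC$, a Weierstrass model and checking that $\PGL_3$-equivalence of Weierstrass models corresponds to $\F_q$-isomorphism of elliptic curves with the standard automorphism count. This is well known (it underlies Elkies's computation in \cite{Elkies}), so in the write-up I would state it cleanly and refer to \cite{Schoof, Elkies, KaplanAGCT} for the automorphism bookkeeping, then assemble the sum as above.
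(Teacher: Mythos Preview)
Your overall strategy and final formula are correct, and the paper handles this proposition by citation to \cite[Proposition~1]{KaplanAGCT} rather than by a self-contained argument, so your plan to cite \cite{Schoof, Elkies, KaplanAGCT} for the bookkeeping is in line with what the paper does.

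That said, the justification you sketch for the orbit-counting step contains two compensating errors that you should correct before writing it up. First, the claim that ``translations are not linear automorphisms'' is false: translation by any $P\in E[3](\F_q)$ preserves the hyperplane class (since $t_P^*\mathcal{O}(3O)\cong\mathcal{O}(3O)$ exactly when $3P=O$) and therefore extends to an element of $\PGL_3(\F_q)$. The linear automorphism group of a Weierstrass cubic is $E[3](\F_q)\rtimes\Aut_{\F_q}(E,O)$, not $\Aut_{\F_q}(E,O)$ alone. Second, the claim that any $\F_q$-isomorphism between smooth plane cubics is induced by $\PGL_3(\F_q)$ is also false: the $\PGL_3$-orbits of plane cubics with Jacobian $E$ correspond to $\mathrm{Pic}^3(E)(\F_q)$ modulo the pullback action of $E(\F_q)\rtimes\Aut_{\F_q}(E,O)$, and since $t_P^*$ acts on $\mathrm{Pic}^3$ as translation by $-3P$, there are multiple orbits whenever $3\mid\#E(\F_q)$. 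In particular such a cubic need not have a rational flex, so your reduction to Weierstrass models would miss curves.

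The clean way to obtain the count is a double-count of pairs $(C,\psi)$ with $\psi\colon E\xrightarrow{\sim} C$ an isomorphism of curves: $\PGL_3(\F_q)$ acts freely on such pairs with quotient $\mathrm{Pic}^3(E)(\F_q)$, giving $\#E(\F_q)\cdot\#\PGL_3(\F_q)$ pairs; dividing by $\#\Aut(E\text{ as curve})=\#E(\F_q)\cdot\#\Aut_{\F_q}(E,O)$ (which also acts freely) yields $\#\PGL_3(\F_q)/\#\Aut_{\F_q}(E,O)$ plane cubics, recovering your formula with the two errors cancelling. Finally, for step~(i) the simplest argument that a smooth plane cubic over $\F_q$ has a rational point is the Hasse bound $\#C(\F_q)\ge q+1-2\sqrt{q}>0$; the rational-flex route fails for the reason above.
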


We now need only give an expression for $ \P_q(\cC(t))$.  We recall some terminology related to class numbers of imaginary quadratic fields.  
\begin{defn}
For $d<0$ with $d \equiv 0,1\Mod{4}$, let $h(d)$ be the class number of the unique imaginary quadratic order of discriminant $d$.  Let 
\est{
h_w(d) = 
\begin{cases} h(d)/3, &\text{ if } d = -3, \\ 
h(d)/2, & \text{ if } d=-4, \\ 
h(d) & \text{ if } d < 0,\ d \equiv 0,1 \Mod{4}, \text{ and } d\neq -3,-4, \\
0 & \text{otherwise}, 
\end{cases}
} 
and for $\Delta \equiv 0,1\Mod 4$ let 
\est{
\label{HKcn}H(\Delta) = \sum_{d^2 \mid \Delta} h_w\left(\frac{\Delta}{d^2}\right)
} 
be the \emph{Hurwitz-Kronecker class number}. For $a\in \Z$ and $n$ a positive integer, the \emph{Kronecker symbol} $\Big(\frac{a}{n}\Big)$ is defined to be the completely multiplicative function in $n$ such that if $p$ is an odd prime $\legen{a}{p}$ is the quadratic residue symbol and if $p=2$ 
\est{
\left(\frac{a}{2}\right) = \begin{cases} 0 & \text{ if } 2 \mid a, \\ 
1 & \text{ if } a \equiv \pm 1 \Mod 8, \\ -1 & \text{ if } a \equiv \pm5 \Mod 8 .
\end{cases} 
}
\end{defn}

The following is a weighted version of \cite[Theorem 4.6]{Schoof}, which builds on earlier work of Deuring and Waterhouse \cite{Deuring, Waterhouse}.  
\begin{lemm}\cite[Lemma 2]{KaplanPetrow}\label{S46withweights}
Let $t\in \Z$.  Suppose $q = p^v$ where $p$ is prime and $v\geq 1$.  Then if $q$ is not a square
\begin{alignat*}{3}
\P_q(\cC(t))  = &  \frac{1}{2q}H(t^2-4q) \quad && \text{ if } t^2 < 4q \text{ and } p\nmid t,\\
 = & \frac{1}{2q}H(-4p)\quad  && \text{ if } t=0, \\
  = & \frac{1}{4q} \quad  && \text{ if } t^2=2q \text{ and } p =2, \\
 = & \frac{1}{6q} && \text{ if } t^2=3q \text{ and } p =3, 
\end{alignat*}
 and if $q$ is a square
\begin{alignat*}{3}
\P_q(\cC(t))  = &  \frac{1}{2q}H(t^2-4q)\quad  && \text{ if } t^2 < 4q \text{ and } p\nmid t,\\
 = & \frac{1}{4q}\left(1 - \legen{-4}{p}\right) \quad   && \text{ if } t=0, \\
 = & \frac{1}{6q}\left(1 - \legen{-3}{p}\right) \quad  && \text{ if } t^2 = q, \\
  = & \frac{p-1}{24q}\quad   && \text{ if } t^2 = 4q,
\end{alignat*}
 and $\P_q(\cC(t)) = 0$ in all other cases.
\end{lemm}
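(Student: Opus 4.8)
Since $\P_q(\{E\}) = \bigl(q\,\#\Aut_{\F_q}(E)\bigr)^{-1}$ and $\cC(t)$ is the set of $E \in \cC$ with $t_E = t$,
\[
\P_q(\cC(t)) \;=\; \frac{1}{q}\sum_{E \in \cC(t)} \frac{1}{\#\Aut_{\F_q}(E)},
\]
so the task is to evaluate this automorphism-weighted count of isomorphism classes. By Hasse's theorem the sum is empty unless $t^2 \le 4q$, which is the last clause of the lemma; for the admissible $t$ the plan is to run through the Deuring--Waterhouse description of $\cC(t)$, handling the ordinary traces uniformly and the supersingular traces case by case.

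Suppose first that $t^2 < 4q$ and $p \nmid t$, so that every $E \in \cC(t)$ is ordinary; then $\operatorname{End}_{\F_q}(E) = \operatorname{End}_{\overline{\F_q}}(E)$ is an order $\mathcal O$ in $K = \Q(\sqrt{t^2-4q})$ containing $\Z[\pi_E]$, where $\pi_E$ is the Frobenius, and $\#\Aut_{\F_q}(E) = \#\mathcal O^{\times}$. Since $p \nmid t$, the order $\Z[\pi_E]$ has discriminant exactly $t^2-4q$ and is already maximal at $p$, so the admissible $\mathcal O$ are precisely the orders between $\Z[\pi_E]$ and $\mathcal O_K$, namely those whose discriminant has the form $(t^2-4q)/f^2$ with $f$ a positive integer and $(t^2-4q)/f^2 \equiv 0,1\Mod{4}$. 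By Deuring's correspondence in the form of \cite[Theorem~4.5]{Schoof}, for each such order the set of $E \in \cC(t)$ with $\operatorname{End}_{\F_q}(E) \cong \mathcal O$ is a torsor under the class group of $\mathcal O$, hence has $h(\mathcal O)$ elements, while $\#\mathcal O^{\times}$ equals $2$ in general, $4$ for $\mathcal O = \Z[i]$, and $6$ for $\mathcal O = \Z[\zeta_3]$. Since $h(\mathcal O)/\#\mathcal O^{\times}$ equals $\tfrac12 h_w$ of the discriminant of $\mathcal O$ in each of these cases, summing over $f$ and using the definition of $H$ gives
\[
\sum_{E \in \cC(t)} \frac{1}{\#\Aut_{\F_q}(E)} \;=\; \frac12 \sum_{f^2 \mid t^2-4q} h_w\!\left(\frac{t^2-4q}{f^2}\right) \;=\; \frac12\,H(t^2-4q),
\]
which after division by $q$ is the first line in each of the two displays. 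Equivalently, passing from the unweighted count of \cite[Theorem~4.6]{Schoof} to the weighted one is exactly the effect of replacing each plain class number by $h_w$.

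It remains to treat the finitely many supersingular traces. Waterhouse's classification \cite{Waterhouse} isolates them: when $q$ is not a square they are $t=0$ and $t^2 = pq$ with $p \in \{2,3\}$; when $q$ is a square they are $t=0$ (only when $p \not\equiv 1\Mod{4}$), $t^2 = q$ (only when $p \not\equiv 1\Mod{3}$), and $t^2 = 4q$; every other $t$ with $t^2 \le 4q$ has $\cC(t) = \emptyset$, hence $\P_q(\cC(t)) = 0$. For each such $t$ one again computes $\operatorname{End}_{\F_q}(E)$ and $\#\Aut_{\F_q}(E)$ and sums $1/\#\Aut_{\F_q}(E)$. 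When $\pi_E \notin \Z$ (as for $t=0$) the endomorphism ring is an imaginary quadratic order, but now the local condition at $p$ in \cite[Theorem~4.5]{Schoof} restricts the admissible orders; this is why the class number data of $\Q(\sqrt{-p})$, i.e.\ $H(-4p)$, rather than of $\Q(\sqrt{-q})$, appears for $t=0$, and the Kronecker symbols $\legen{-4}{p}$ and $\legen{-3}{p}$ record precisely when the finitely many remaining orders occur. When $\pi_E = \pm\sqrt q \in \Z$ (the case $t^2 = 4q$), $E$ ranges over the supersingular curves all of whose geometric endomorphisms are rational over $\F_q$, and $\sum 1/\#\Aut_{\F_q}(E)$ equals the supersingular mass $\tfrac{p-1}{24}$ by the Deuring--Eichler mass formula. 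Running through this finite list and dividing by $q$ yields the remaining entries; this computation is carried out in \cite[Lemma~2]{KaplanPetrow}.

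The step that needs genuine care is the supersingular bookkeeping: pinning down the admissible endomorphism orders via the local condition at $p$, computing the $\F_q$-automorphism groups --- which, unlike in the ordinary case, can be proper subgroups of the geometric automorphism groups and behave irregularly when $p \in \{2,3\}$ --- and checking that the resulting weighted counts are supported exactly on Waterhouse's list. The ordinary case, by contrast, is a formal consequence of Deuring's theorem and the definition of the Hurwitz--Kronecker class number.
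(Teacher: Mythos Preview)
The paper does not actually prove this lemma; it is quoted verbatim as \cite[Lemma 2]{KaplanPetrow} (itself a weighted reformulation of \cite[Theorem 4.6]{Schoof}), so there is no ``paper's own proof'' to compare against. Your sketch correctly reproduces the standard Deuring--Waterhouse--Schoof argument: the ordinary case via the bijection between isogeny classes and ideal classes of orders containing $\Z[\pi_E]$, yielding the Hurwitz--Kronecker class number, and the supersingular traces handled individually using Waterhouse's list and the Eichler mass formula, which is exactly how the cited references proceed.
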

Combining Proposition \ref{WE_C23_Smooth} and Lemma \ref{S46withweights} gives an expression for $W_{C_{2,3}}^{\text{smooth}}(X,Y)$ in terms of class numbers of orders in imaginary quadratic fields.

We move on to the computation of the weight enumerators for the dual codes. We apply Theorem \ref{MacWilliamsThm} to $W_{C_{2,2}}(X,Y)$ to determine the low-weight coefficients of~$W_{C_{2,2}^\perp}(X,Y)$.
\begin{prop}\label{WC22Dualprop}
Let 
\[
W_{C_{2,2}^\perp}(X,Y) = \sum_{i=0}^{q^2+q+1} B_i X^{q^2+q+1 - i } Y^i.
\]
Then $B_0 = 1,\ B_1 = B_2 = B_3 = 0$, and 
\begin{eqnarray*}
B_4 & = & (q-1)(q^2+q+1) \binom{q+1}{4}\\
B_5 & = & \left((q^2-1) - \binom{5}{4}(q-1)\right)(q^2+q+1) \binom{q+1}{5}\\
B_6 & = & \left((q^3 - 1) - \binom{6}{5} (q^2-1)+\binom{6}{4} (q-1)\right) (q^2+q+1) \binom{q+1}{6} \\
& & + (q-1) (q^5-q^2) \binom{q+1}{6} + (q-1) \binom{q^2+q+1}{2} \binom{q}{3}^2.
\end{eqnarray*}
\end{prop}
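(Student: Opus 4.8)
The plan is to obtain $W_{C_{2,2}^\perp}(X,Y)$ from Proposition~\ref{WC22prop} via the MacWilliams identity and then read off the low-weight coefficients. Since $\binom{2+2}{2}=6$ and the evaluation map defining $C_{2,2}$ is injective for $q\ge 3$, we have $|C_{2,2}|=q^6$, so Theorem~\ref{MacWilliamsThm} gives
\[
W_{C_{2,2}^\perp}(X,Y)=\frac{1}{q^6}\,W_{C_{2,2}}\bigl(X+(q-1)Y,\ X-Y\bigr).
\]
Writing $W_{C_{2,2}}(X,Y)=\sum_j A_j X^{N-j}Y^j$ with $N=q^2+q+1$, where only the four values of $j$ appearing in Proposition~\ref{WC22prop} contribute, each term becomes $A_j(X+(q-1)Y)^{N-j}(X-Y)^j$, whose coefficient of $X^{N-i}Y^i$ equals
\[
A_j\sum_{m=0}^{\min(i,\,N-j)}(-1)^{i-m}\binom{N-j}{m}\binom{j}{i-m}(q-1)^m.
\]
Summing over the four terms and dividing by $q^6$ expresses $B_i$ as an explicit polynomial in $q$. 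For $i=0$ this is immediate: setting $Y=0$ gives $q^{-6}W_{C_{2,2}}(X,X)=q^{-6}\bigl(\sum_j A_j\bigr)X^N=X^N$, so $B_0=1$. For $i=1,2,3$ the four contributions cancel; equivalently, $B_1=B_2=B_3=0$ because any set of at most three points of $\Proj^2(\F_q)$ imposes independent conditions on conics --- given such a set and one of its points $p$, a product of two lines, each through a distinct one of the remaining points but not through $p$, is a conic vanishing at the others but not at $p$ --- so $C_{2,2}^\perp$ has minimum distance at least $4$. The cases $i=4,5,6$ come from the same extraction after simplifying the binomial sums into the stated forms.

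Each coefficient also has a geometric reading that both checks the computation and explains the grouping of terms displayed for $B_6$. A weight-$i$ codeword of $C_{2,2}^\perp$ is a linear dependence among the conic-evaluation functionals at $i$ points that involves all of them, so the underlying points fail to impose independent conditions on conics. For $i=4$ this forces four collinear points (four distinct points with no three... in fact no four collinear impose independent conditions, as one sees by noting that conics through three collinear points form $\ell\cdot\{\text{lines}\}$), and restricting $C_{2,2}$ to those four points yields a $[4,3]$ MDS code whose dual $[4,1]$ code contributes $q-1$ codewords per quadruple, giving $B_4=(q-1)(q^2+q+1)\binom{q+1}{4}$. For $i=5$ one similarly finds only five collinear points occur, and the dual of the local $[5,3]$ MDS code has $(q^2-1)-\binom{5}{4}(q-1)$ words of weight $5$. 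For $i=6$ there are three disjoint, exhaustive families: six collinear points (first summand, the weight-$6$ coefficient of the dual of the $[q+1,3]$ MDS code on a line, with $\binom{6}{5}$ and $\binom{6}{4}$ appearing); six points on a smooth conic, of which there are $q^5-q^2$, each contributing $\binom{q+1}{6}$ sextuples carrying $q-1$ codewords (second summand, since B\'ezout forces such a sextuple to lie on a unique conic and no proper subset fails); and a pair of distinct lines with three chosen points on each and none at the intersection point (third summand, counted by $\binom{q^2+q+1}{2}\binom{q}{3}^2$).

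The essential difficulty is bookkeeping rather than ideas: one must perform the substitution into the four-term enumerator, combine for each $i\le 6$ the four binomial expressions above, keep track of the $q^{-6}$ normalization, and confirm that the outputs are honest polynomials in $q$. The one slightly delicate point is matching the single polynomial that MacWilliams produces for $B_6$ with the three-summand expression in the statement; this is cleanest through the geometric decomposition above, together with the verification that no proper subconfiguration of any of the three types already fails to impose independent conditions on conics, so that every contribution genuinely has full weight $i$.
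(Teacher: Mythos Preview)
Your proposal is correct and follows essentially the same approach as the paper: apply the MacWilliams identity (Theorem~\ref{MacWilliamsThm}) to the explicit $W_{C_{2,2}}(X,Y)$ of Proposition~\ref{WC22prop} and extract the coefficients $B_0,\ldots,B_6$. The geometric reading you give for the three summands of $B_6$ also matches exactly the explanation the paper provides later (after Lemma~\ref{VSconiclemma}), though your justification that weight-$5$ supports must be \emph{entirely} collinear is slightly more informal than the paper's argument in Lemma~\ref{supportconiclemma}.
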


Applying Theorem \ref{MacWilliamsThm} to $W_{C^A_{2,2}}(X,Y)$ gives the following result.
\begin{prop}
Suppose $q > 2$.  Let 
\[
W_{(C^{A}_{2,2})^\perp}(X,Y) = \sum_{i=0}^{q^2} B_i X^{q^2 - i } Y^i.
\]
Then $B_0 = 1,\ B_1 = B_2 = B_3 = 0$, and $B_4 = (q-1)(q^2+q)\binom{q}{4}$.
\end{prop}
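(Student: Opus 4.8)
The plan is to apply the MacWilliams identity of Theorem \ref{MacWilliamsThm} directly to the weight enumerator $W_{C^A_{2,2}}(X,Y)$ recorded above, exactly as in the proof of Proposition \ref{WC22Dualprop}. For $q>2$ the evaluation map $\F_q[x,y]_{\le 2}\to\F_q^{q^2}$ is injective, so $C^A_{2,2}$ has dimension $\binom{4}{2}=6$ and $|C^A_{2,2}|=q^6$; hence
\[
W_{(C^A_{2,2})^\perp}(X,Y)=\frac{1}{q^6}\,W_{C^A_{2,2}}\bigl(X+(q-1)Y,\ X-Y\bigr).
\]
Writing $W_{C^A_{2,2}}(X,Y)=\sum_w A_w X^{q^2-w}Y^w$, one substitutes the explicit $A_w$ and reads off the coefficients of $X^{q^2},X^{q^2-1}Y,\dots,X^{q^2-4}Y^4$. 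The coefficient of $X^{q^2-i}Y^i$ contributed by the $w$-th term is $A_w\sum_{j=0}^i\binom{q^2-w}{i-j}\binom{w}{j}(q-1)^{i-j}(-1)^j$, which is a polynomial of degree at most $i$ in $w$, so $B_i$ depends only on the power sums $\sum_w w^k A_w$ with $k\le i$; for $i\le 4$ these are small enough to evaluate by hand (or to confirm by machine, as is done throughout the paper).

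It is cleaner, and provides an independent check, to obtain the answer geometrically — this is the $(n,d)=(2,2)$ instance of the analysis of Section \ref{sec:dual_coef}. A nonzero codeword of $(C^A_{2,2})^\perp$ of weight $i$ corresponds to a set $S$ of $i$ points of $\F_q^2$ together with a relation $\sum_{p\in S}\lambda_p f(p)=0$ for all $f\in\F_q[x,y]_{\le 2}$ with every $\lambda_p\neq 0$; such an $S$ must fail to impose independent conditions on polynomials of degree $\le 2$. Any set of at most $3$ points imposes independent conditions (three collinear points are separated by products of two distinct affine lines, and for fewer or non-collinear points a single line suffices), so $B_1=B_2=B_3=0$. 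A set of $4$ distinct points fails to impose independent conditions precisely when the points are collinear: a conic through three collinear points must contain their line $\ell$ and hence vanishes at the fourth point automatically, so the conics through $4$ collinear points form the $3$-dimensional space $\ell\cdot\F_q[x,y]_{\le 1}$, whereas $4$ points in any other configuration impose $4$ independent conditions. For a collinear $4$-subset $S$ the space of codewords of $(C^A_{2,2})^\perp$ supported in $S$ is therefore exactly $1$-dimensional, and since no proper subset of $S$ is dependent, all $q-1$ of its nonzero codewords have weight exactly $4$. There are $q^2+q$ affine lines, each carrying $\binom{q}{4}$ collinear $4$-subsets, and distinct $4$-subsets give distinct codewords, so $B_4=(q-1)(q^2+q)\binom{q}{4}$. (When $q=3$ a line has only $3$ points and $\binom{3}{4}=0$, consistent with the formula.)

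There is no genuine obstacle here. The only points needing care are confirming that $\dim C^A_{2,2}=6$, so that the normalization $1/q^6$ in the MacWilliams identity is correct, and performing the substitution and binomial expansion without arithmetic errors; the geometric computation above serves as a convenient verification of the coefficient $B_4$ and of the vanishing of $B_1,B_2,B_3$.
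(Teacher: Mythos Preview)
Your proposal is correct and aligns with the paper. The paper's stated proof is precisely the MacWilliams substitution you describe first; your additional geometric argument (classifying collinear $4$-subsets of the affine plane and counting $q-1$ codewords supported on each) is exactly the analysis the paper later sketches in Section~\ref{sec:twoconics} when it needs $W^{[2]}_{(C^A_{2,2})^\perp}$, so both halves of your write-up are already present in the paper.
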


Applying Theorem \ref{MacWilliamsThm} to our expression for $W_{C_{2,3}}(X,Y)$ determines the low-weight coefficients of $W_{C_{2,3}^\perp}(X,Y)$.  The following result is given in a different form as the first part of \cite[Theorem 3]{KaplanAGCT}.  
\begin{thm}\label{C23LowWeightDual}
Let $\F_q$ be a finite field of size $q>2$, and
\[
W_{C_{2,3}^\perp}(X,Y) = \sum_{i=0}^{q^2+q+1} B_i X^{q^2+q+1 - i } Y^i.
\]
Then $B_0 = 1,\ B_1 = B_2 = B_3 = B_4 = 0$, and 
\begin{eqnarray*}
B_5 & = & (q-1)(q^2+q+1) \binom{q+1}{5}\\
B_6 & = & \left((q^2-1) - \binom{6}{5}(q-1)\right)(q^2+q+1) \binom{q+1}{6}\\
B_7 & = & \left((q^3 - 1) - \binom{7}{6} (q^2-1)+\binom{7}{5} (q-1)\right) (q^2+q+1) \binom{q+1}{7} \\
B_8 & = & \left((q^4 - 1) - \binom{8}{7} (q^3-1)+\binom{8}{6} (q^2-1) - \binom{8}{5}(q-1)\right) (q^2+q+1) \binom{q+1}{8}  \\
& & + (q-1) (q^5-q^2) \binom{q+1}{8} + (q-1) \binom{q^2+q+1}{2} \binom{q}{4}^2 \\
B_9 & = & (q-1)I_9(q) +(q^5-q^2)\dbinom{q+1}{9}\left((q^2-1)-9(q-1)\right)\\  
& & +(q^2+q+1)\dbinom{q+1}{9}(q^5 - 9q^4 + 36q^3 - 84q^2 + 126q - 70) \\
& & +2\dbinom{q^2+q+1}{2}\dbinom{q}{5}\dbinom{q}{4}(q^2-6q+5)+\dbinom{q^2+q+1}{2}\dbinom{q}{4}^2(q^2-3q+2),
\end{eqnarray*}
where 
\[
I_9(q) =\frac{1}{9!}(q^6 + 2q^5 - 73q^4 + 344q^3 - 838q^2 + 1754q - 2030)(q^2 + q + 1)(q + 1)(q - 1)^2(q - 2)q^4.
\]
\end{thm}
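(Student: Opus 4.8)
The plan is to apply the MacWilliams identity (Theorem \ref{MacWilliamsThm}) to the explicit weight enumerator $W_{C_{2,3}}(X,Y)$, which is known exactly by combining the expression for $W^{\text{sing}}_{C_{2,3}}(X,Y)$ from \cite[Lemma 2]{KaplanAGCT} with $W^{\text{smooth}}_{C_{2,3}}(X,Y)$ from Proposition \ref{WE_C23_Smooth} and Lemma \ref{S46withweights}. Writing $W_{C_{2,3}^\perp}(X,Y) = \frac{1}{|C_{2,3}|} W_{C_{2,3}}(X+(q-1)Y, X-Y)$ with $|C_{2,3}| = q^{10}$, the coefficient $B_i$ of $X^{q^2+q+1-i}Y^i$ is a $\Q$-linear combination of the $A_j$'s with coefficients given by Krawtchouk polynomials. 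Since the Krawtchouk transform only mixes $A_j$ with $A_{j'}$ for $j, j'$ not too far apart in a way that lets low-weight dual coefficients be expressed in terms of \emph{high-weight} primal coefficients, the cleaner route — and the one I would actually carry out — is the dual interpolation approach from Section \ref{sec:dual_coef}: a word $c \in C_{2,3}^\perp$ of weight $i$ corresponds to a collection of $i$ points in $\Proj^2(\F_q)$ (together with nonzero scalars) that fails to impose independent conditions on cubics, and conversely. So the heart of the proof is a careful enumeration, for each $i \le 9$, of the configurations of $i \le 9$ points in $\Proj^2(\F_q)$ on which cubics fail to impose independent conditions, weighted by the number of scalar vectors supported exactly on that configuration lying in $C_{2,3}^\perp$.

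The key steps, in order, are as follows. First, recall the standard fact (to be proved or cited in Section \ref{sec:dual_coef}) that $i$ points impose independent conditions on cubics for $i \le 4$ always, hence $B_1 = B_2 = B_3 = B_4 = 0$; and that for $5 \le i \le 9$ the only configurations failing independence are: $i$ collinear points ($5 \le i \le 9$), and — starting at $i=8$ — configurations that decompose as a conic's worth of points plus residual structure, most importantly the $3+3$ "two triangles on a conic" / Cayley–Bacharach type configurations that produce the $\binom{q^2+q+1}{2}\binom{q}{3}^2$ and $\binom{q}{5}\binom{q}{4}$, $\binom{q}{4}^2$ terms visible in $B_8, B_9$. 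Second, for the collinear contributions: a line $L$ carries $q+1$ rational points; choosing $i$ of them and a scalar vector in the dual means choosing a codeword of the length-$(q+1)$ Reed–Solomon-type code $C_{1,3}^\perp$ (an MDS code of dimension $q+1-4 = q-3$) supported on exactly those $i$ coordinates; the number of full-support weight-$i$ words in an MDS code is the standard inclusion–exclusion expression $\sum_{j=0}^{i-5}(-1)^j\binom{i}{j}(q^{i-4-j}-1)$ type formula, which accounts for the alternating-binomial factors $\big((q^{i-4}-1) - \binom{i}{i-1}(q^{i-5}-1)+\cdots\big)$ in $B_5,\dots,B_8$. Third, for $i = 8$: add to the $8$-collinear term the $(q-1)(q^5-q^2)\binom{q+1}{8}$ contribution (points lying on a conic but not a line, spanning an appropriate residual space) and the $(q-1)\binom{q^2+q+1}{2}\binom{q}{3}^2$ contribution from pairs of lines, counting two triangles; here one uses the Cayley–Bacharach theorem to see exactly which $8$-point configurations on a conic fail independence and with what multiplicity. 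Fourth, for $i = 9$: assemble the $9$-collinear term, the "$9$ points on a conic" term, two mixed terms coming from $(5,4)$ and $(4,4)$ splits of the nine points among a pair of lines, and finally the genuinely new term $(q-1)I_9(q)$ coming from nine points in general position lying on a pencil of cubics — equivalently, nine points that are the complete intersection of two cubics — whose count $I_9(q)$ must match the count of such $9$-point configurations. Here one invokes the theorem of Kaplan cited in the introduction (valid a priori for $p \neq 2,3$, extended to all $q$ via Appendix \ref{AppendixA}) together with the class-number formula of Lemma \ref{S46withweights} to evaluate $I_9(q)$ in closed form; the sextic factor $q^6+2q^5-73q^4+\cdots$ is exactly the polynomial that emerges from summing $\P_q(\cC(t))$ weighted appropriately and collapsing the Hurwitz class-number sums via the Eichler–Selberg / Hurwitz–Kronecker identities.

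The main obstacle is the $B_9$ computation, specifically pinning down $I_9(q)$: this is where the arithmetic-geometric input is irreducible. All the other coefficients reduce to combinatorics of MDS codes on lines and elementary incidence geometry of lines and conics, but the weight-$9$ dual words supported on nine points in "general position" (no $5$ on a line, not all on a conic) correspond to pencils of plane cubics, hence ultimately to elliptic curves with a marked structure, and counting these requires the full force of the Deuring–Waterhouse–Schoof mass formula packaged in Proposition \ref{WE_C23_Smooth} and Lemma \ref{S46withweights}. The delicate points are (i) disentangling the smooth and singular contributions correctly so that the class-number sums telescope into a polynomial, and (ii) ensuring the count is characteristic-independent — the cited \cite[Theorem 3]{KaplanAGCT} is only stated for $p \neq 2, 3$, so one must either re-derive $I_9(q)$ directly (the job of Appendix \ref{AppendixA}) or check that the class-number formulas in the square and non-square, small-characteristic cases still sum to the same polynomial. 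Once $I_9(q)$ is in hand, verifying that the Krawtchouk/MacWilliams bookkeeping reproduces the stated $B_5,\dots,B_9$ is a finite (if lengthy) symbolic computation, which is exactly what the accompanying Sage worksheets are for.
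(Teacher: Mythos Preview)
Your overall approach matches the paper's: identify supports of low-weight dual codewords as configurations failing to impose independent conditions on cubics (Lemma~\ref{EGHlemma} and Lemma~\ref{supportcubiclemma}), count collinear contributions via inclusion--exclusion on the MDS code on a line (Proposition~\ref{fdmprop}), add the non-collinear configurations case by case, and for $B_9$ obtain $I_9(q)$ either from MacWilliams applied to the full $W_{C_{2,3}}$ (hence the class-number input) or from the direct geometric count of Appendix~\ref{AppendixA}.

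However, your description of the $B_8$ case is wrong in a way that matters. An $8$-point support that is not collinear must, by Lemma~\ref{EGHlemma} with $d=3$ and $n=2d+2=8$, lie on a conic: either a smooth conic (giving the $(q^5-q^2)\binom{q+1}{8}$ term) or two $\F_q$-rational lines with \emph{four} points on each, avoiding the intersection point. That last family is counted by $\binom{q^2+q+1}{2}\binom{q}{4}^2$, not $\binom{q^2+q+1}{2}\binom{q}{3}^2$; your ``$3+3$ two triangles'' picture gives only six points, not eight, and cannot support a weight-$8$ codeword. (The $\binom{q}{3}^2$ and the $\binom{q+1}{7}$ appearing in the displayed $B_8$ are evidently typos for $\binom{q}{4}^2$ and $\binom{q+1}{8}$ --- compare the paper's own derivation following Lemma~\ref{VScubic8lemma}.) The tool here is precisely the $2d{+}2$ case of Lemma~\ref{EGHlemma}, not Cayley--Bacharach; the latter enters only at weight $9$, where it accounts for the $I_9(q)$ term. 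Relatedly, for the ``general position'' $9$-point supports the correct exclusion is \emph{no $4$ collinear and no $7$ on a conic} (a cubic through four collinear points already contains the line), not ``no $5$ on a line.'' Fix these and your outline goes through.
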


\noindent We will explain the form of these dual code coefficients in the next section.

\begin{remark}
In \cite[Theorem 3]{KaplanAGCT} there is an additional restriction that the characteristic of $\F_q$ is not $2$ or $3$.  It seems that the proof of this result does work without any changes for all $q>2$.  In the next section, we give a different proof of the formulas for $B_i$ when $i \le 8$ and also for $B_9$, except for the computation of $I_9(q)$.  In Appendix \ref{AppendixA} we compute $I_9(q)$ in a different way that is independent of the characteristic of $\F_q$, which completes a separate proof of the formula for $B_9$.
\end{remark}

\section{Dual Code Coefficients and Configurations of Points that Fail to Impose Independent Conditions}\label{sec:dual_coef}

\subsection{Statement of Results}
The goal of this section is to analyze the supports of low-weight codewords of $C_{2,2}^\perp$ and $C_{2,3}^\perp$ and prove the following three results.

The first result will be used in Section \ref{sec:twoconics} to prove an analogue of Theorem \ref{ThmIntCubics} for intersections of plane conics.
\begin{lemm}\label{lemmalowweightconics}
Let
\[
W^{[2]}_{C_{2,2}^\perp}(X,Y) = \sum_{i=0}^{q^2+q+1} B^{[2]}_i X^{q^2+q+1 - i } Y^i.
\]
We have $B^{[2]}_0 = 1,\ B^{[2]}_1 = B^{[2]}_2 = B^{[2]}_3 = 0$, and $B^{[2]}_4 = (q^2-1)(q^2+q+1)\binom{q+1}{4}$.
\end{lemm}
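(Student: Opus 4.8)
The plan is to read off the low-weight coefficients of $W^{[2]}_{C_{2,2}^\perp}(X,Y)$ from the decomposition \eqref{EqSupp}, combined with the weight distribution of $C_{2,2}^\perp$ already recorded in Proposition \ref{WC22Dualprop}. Write $C = C_{2,2}^\perp$ and $N = q^2+q+1$. By \eqref{EqSupp},
\[
W^{[2]}_{C}(X,Y) = X^N + (q^2-1)\, W^{(1)}_{C}(X,Y) + (q^2-1)(q^2-q)\, W^{(2)}_{C}(X,Y),
\]
so, since the joint support of a pair $(c_1,c_2)$ is exactly the support of the subspace it spans, $B^{[2]}_i$ is determined by the number of $r$-dimensional subspaces of $C$ of weight $i$ for $r=1,2$. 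It therefore suffices to understand subspaces of $C$ of weight at most $4$.

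First I would invoke Proposition \ref{WC22Dualprop}: the minimum distance of $C$ is $4$, there are no nonzero codewords of weight $1,2,3$, and there are $(q-1)(q^2+q+1)\binom{q+1}{4}$ codewords of weight $4$. In particular no nonzero subspace of $C$ has weight less than $4$, so the only pair with joint weight $0$ is $(0,0)$ and no pair has joint weight $1$, $2$, or $3$; this yields $B^{[2]}_0 = 1$ and $B^{[2]}_1 = B^{[2]}_2 = B^{[2]}_3 = 0$. Moreover the number of $1$-dimensional subspaces of weight $4$ is $(q^2+q+1)\binom{q+1}{4}$, which via the $W^{(1)}_{C}$ term contributes exactly $(q^2-1)(q^2+q+1)\binom{q+1}{4}$ to $B^{[2]}_4$ — already the claimed value.

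The one remaining point, and the only place where there is anything to prove, is that $C$ has no $2$-dimensional subspace of weight $4$, so that $W^{(2)}_{C}$ contributes nothing to $B^{[2]}_4$. Suppose $S \subseteq C$ is $2$-dimensional with support $T$, where $|T| = 4$. Restriction to the coordinates indexed by $T$ is injective on $S$, since every coordinate outside $T$ vanishes identically on $S$; hence $S|_T$ is a $2$-dimensional code of length $4$. But every nonzero $c \in S$ satisfies $\wt(c) \ge 4$ by the minimum distance, while $\supp(c) \subseteq T$ forces $\wt(c) \le 4$, so $\wt(c) = 4$. Thus $S|_T$ would be a $[4,2,4]$ code, contradicting the Singleton bound $d \le n-k+1 = 3$. (Equivalently, in geometric terms: the support of a weight-$4$ codeword of $C_{2,2}^\perp$ is a set of $4$ points failing to impose independent conditions on conics, necessarily $4$ collinear points; but any $4$ points impose at least $3$ independent conditions on conics, so the space of linear relations among the associated evaluation vectors has dimension at most $1$, leaving no room for a $2$-dimensional $S$.) This finishes the argument, giving $B^{[2]}_4 = (q^2-1)(q^2+q+1)\binom{q+1}{4}$. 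I expect the Singleton (equivalently, interpolation) step to be the only substantive ingredient; everything else is bookkeeping with \eqref{EqSupp} and Proposition \ref{WC22Dualprop}.
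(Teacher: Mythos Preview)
Your argument is correct, and it takes a somewhat different route from the paper's. The paper proves this lemma by first invoking Lemma \ref{supportconiclemma} to identify the support of any weight-$4$ codeword as a set of $4$ collinear points, then using Proposition \ref{fdmprop} (equivalently Proposition \ref{Vdmdim}) to see that the codewords supported on such a set form a $1$-dimensional space; counting pairs then amounts to choosing one of the $(q^2+q+1)\binom{q+1}{4}$ supports and one of the $q^2-1$ ordered pairs of codewords, not both zero, on that support. You instead use the subspace decomposition \eqref{EqSupp}, read the $1$-dimensional contribution directly from the coefficient $B_4$ in Proposition \ref{WC22Dualprop}, and rule out any $2$-dimensional contribution with the Singleton bound. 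Your approach is cleaner here because it sidesteps the geometric classification of supports entirely and needs only the minimum distance and the single number $B_4$; the paper's approach, on the other hand, is the one that scales to the analogous Lemmas \ref{lemmalowweightconiccubic} and \ref{lemmalowweightcubics}, where the Singleton bound no longer suffices and one genuinely needs the geometry of Lemmas \ref{supportconiclemma}, \ref{supportcubiclemma}, and the dimension counts in Lemmas \ref{VSconiclemma}--\ref{VScubic9lemma}.
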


The next result will be used in Section \ref{sec:con_cub} to prove an analogue of Theorem \ref{ThmIntCubics} for the intersection of a conic and a cubic.
\begin{lemm}\label{lemmalowweightconiccubic}
Suppose $q>2$.  Let
\begin{eqnarray*}
W_{C_{2,2}^\perp}(X,Y) & = & \sum_{i=0}^{q^2+q+1} B^{2,2}_i X^{q^2+q+1 - i } Y^i, \\
W_{C_{2,3}^\perp}(X,Y) & = & \sum_{i=0}^{q^2+q+1} B^{2,3}_i X^{q^2+q+1 - i } Y^i,\\
W^{[2]}_{C_{2,2}^\perp,C_{2,3}^\perp}(X,Y) & = &   \sum_{i=0}^{q^2+q+1} B^{[2]}_i X^{q^2+q+1 - i } Y^i.
\end{eqnarray*}
Then $B^{[2]}_0 = 1,\ B^{[2]}_1 = B^{[2]}_2 = B^{[2]}_3 = 0$, and 
\begin{eqnarray*}
B^{[2]}_4 & = & B^{2,2}_4\\
B^{[2]}_5 & = & B^{2,2}_5 + B^{2,3}_5 + (q^3-1) (q^2-1) \binom{q+1}{5} \\
B^{[2]}_6 & = &B^{2,2}_6 + B^{2,3}_6 +  (q^3-1)(q^2-1) (q^2+q-5) \binom{q+1}{6}.
\end{eqnarray*}
\end{lemm}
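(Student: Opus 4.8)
The plan is to compute the low-weight coefficients $B^{[2]}_i$ of the mixed second weight enumerator $W^{[2]}_{C_{2,2}^\perp,C_{2,3}^\perp}$ by the same configuration-of-points argument that underlies Theorem \ref{C23LowWeightDual}: a pair $(c_1,c_2)$ with $c_1\in C_{2,2}^\perp$, $c_2\in C_{2,3}^\perp$ contributes to $B^{[2]}_i$ when the union of the supports of $c_1$ and $c_2$ has size $i$, and one stratifies this according to $\dim\langle c_1,c_2\rangle$ and according to the underlying collection of points. Since a codeword of $C_{n,d}^\perp$ supported on a set $S$ of points exists exactly when $S$ fails to impose independent conditions on degree-$d$ forms, the small-weight codewords of $C_{2,2}^\perp$ are supported on $\ge 4$ points in general position whose $\le 5$-point subsets are in "conic-special" position, and those of $C_{2,3}^\perp$ on $\ge 5$ points that are "cubic-special." First I would recall, as in the derivation of Theorem \ref{C23LowWeightDual}, that for $i\le 4$ the only contributions are the trivial ones together with the conic codewords (hence $B^{[2]}_4 = B^{2,2}_4$, as $C_{2,3}^\perp$ has $B_4 = 0$ and no genuinely two-dimensional span can have support of size $4$ once $c_2=0$ is forced), and for $i\le 6$ the underlying point sets $S$ with $|S|\le 6$ that support a codeword of either dual code all lie on a conic or a line — so no genuinely "planar-cubic-only" configurations intervene yet.

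The heart of the computation is the cross term: pairs $(c_1,c_2)$ with both $c_1,c_2\neq 0$ and $\langle c_1,c_2\rangle$ two-dimensional. Here I would argue that for weight $5$ and $6$ the union $S=\supp(c_1)\cup\supp(c_2)$ has $|S|\in\{5,6\}$ and must itself support codewords of \emph{both} $C_{2,2}^\perp$ and $C_{2,3}^\perp$ of the relevant weights (since a codeword of $C_{2,2}^\perp$ is automatically a codeword of $C_{2,3}^\perp$, as every quadratic form restricts from a cubic form after multiplying by a linear form — more precisely $C_{2,2}\subseteq$ the relevant shortening, so $C_{2,3}^\perp\subseteq C_{2,2}^\perp$ is false, but $C_{2,2}^\perp$-codewords on a minimal-weight support are still detected). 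The cleanest route: on a fixed support set $S$ of size $5$ or $6$ lying on a conic, count ordered pairs $(c_1,c_2)\in C_{2,2}^\perp\times C_{2,3}^\perp$ with $\supp(c_1)\cup\supp(c_2) = S$ and with $c_1,c_2$ independent, by inclusion–exclusion over the subspaces of $\F_q^S$ cut out by the two dual codes localized at $S$. For $|S| = 5$ on a conic: $C_{2,2}^\perp$ restricted to $S$ is $1$-dimensional (the unique conic relation) and $C_{2,3}^\perp$ restricted to $S$ is also $1$-dimensional, and these two lines in $\F_q^5$ coincide (both are the "sum of residues" relation), so the number of independent pairs with full support $S$ is $(q^2-1)(q^2-q)$ minus degenerate cases — this should reproduce the stated $(q^3-1)(q^2-1)\binom{q+1}{5}$ after multiplying by the number $\binom{q+1}{5}\cdot(\text{number of conics})$...

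Here I must be more careful, and this is where I expect the main obstacle to lie: correctly bookkeeping the three types of terms — (i) $c_2=0$, giving $B^{2,2}_i$; (ii) $c_1 = 0$, giving $B^{2,3}_i$; and (iii) both nonzero — and within (iii) separating the subcase where $\langle c_1,c_2\rangle$ is actually one-dimensional over the support (i.e. $c_1,c_2$ are proportional as vectors, so $\supp(c_1)=\supp(c_2)$) from the genuinely two-dimensional subcase, while matching the combinatorial constant $(q^2+q-5)$ appearing in $B^{[2]}_6$. I would handle weight $6$ by the same localization argument on the now-richer set of supports: six points on a conic with no three collinear, six points with exactly one collinear triple, etc., tracking for each configuration type the dimensions of $C_{2,2}^\perp|_S$ and $C_{2,3}^\perp|_S$ and the dimension of their intersection; the factor $q^2+q-5 = (q^2-1)-(q-1)-5(q-1)\cdot\tfrac{?}{?}$ strongly suggests an inclusion–exclusion of the shape $\big((q^2-1)-\binom{6}{5}(q-1)\big)-\text{(correction)}$ mirroring the $B_6$ formula in Theorem \ref{C23LowWeightDual}, so I would reverse-engineer the identity from that structure and then verify it configuration by configuration. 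The final step is purely formal: collect the three contributions, simplify, and check the resulting polynomials against the Sage computations referenced in the paper. The genuine difficulty is entirely in step (iii) — ruling out cross-term supports of size $5$ or $6$ that are \emph{not} forced to lie on a conic (e.g. could a weight-$6$ codeword of $C_{2,3}^\perp$ pair with a weight-$4$ conic codeword to give a size-$6$ union where the six points are in cubic-special but not conic-special position?), and showing every such hypothetical configuration either does not occur or contributes $0$.
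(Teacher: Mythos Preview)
Your overall plan---split $B^{[2]}_i$ into contributions from $(c_2,0)$, from $(0,c_3)$, and from both-nonzero pairs, then classify the supports of the latter---is exactly the paper's approach. But you misidentify the relevant supports for the cross term, and this is a genuine gap. The key fact you are missing (Lemma~\ref{supportcubiclemma}) is that every nonzero codeword of $C_{2,3}^\perp$ of weight $5$, $6$, or $7$ is supported on a set of \emph{collinear} points. Thus if $c_2\in C_{2,2}^\perp$ and $c_3\in C_{2,3}^\perp$ are both nonzero with $\wt(c_2,c_3)\le 6$, then $\wt(c_3)\in\{5,6\}$ forces $\supp(c_3)$ onto a line, and since $\supp(c_2)$ lies in the size-$\le 6$ set $\supp(c_2,c_3)$ it has at least $\wt(c_2)-1\ge 3$ points on that line; Lemma~\ref{supportconiclemma} then forces all of $\supp(c_2,c_3)$ to be collinear. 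Your specific claim that for five points $S$ on a smooth conic the space $C_{2,2}^\perp|_S$ is $1$-dimensional is false: five points on a smooth conic impose independent conditions on conics (and on cubics), so neither dual code has any nonzero codeword supported there. The hypothetical exotic configurations you worry about in your last paragraph are already excluded by Lemma~\ref{supportcubiclemma}.

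Once you know the joint support is a set of $m\in\{5,6\}$ collinear points, the count is immediate from Proposition~\ref{Vdmdim}: on $m$ collinear points, $\dim(C_{2,2}^\perp|_S)=m-3$ and $\dim(C_{2,3}^\perp|_S)=m-4$. For $m=5$ this gives $(q^2-1)(q-1)$ both-nonzero pairs for each of the $(q^2+q+1)\binom{q+1}{5}$ collinear $5$-sets, producing $(q^3-1)(q^2-1)\binom{q+1}{5}$. For $m=6$ there are $(q^3-1)(q^2-1)$ both-nonzero pairs supported in $S$, of which $\binom{6}{5}(q^2-1)(q-1)$ have joint weight $5$; subtracting and using $(q^3-1)-6(q-1)=(q-1)(q^2+q-5)$ gives the stated factor. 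No conic-supported or two-line configurations enter the cross term at these weights.
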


The final result plays a key role in the proof of Theorem \ref{ThmIntCubics}.  Let 
\begin{eqnarray*}
g_3(5) & = & (q-1)^2\\
g_3(6) & = & (q^2+2q-5) (q-1)^2 \\
g_3(7) & = & (q^4-2q^3 - 4 q^2 - 12 q + 15)(q-1)^2 \\
g_3(8) & = & (q^5+3q^4 - 2 q^3- 14q^2 - 7q + 35)(q-1)^3 \\
g_3(9) & = & (q^8+2q^7 - 6 q^6 - 14 q^5 + 14 q^4 + 40 q^3 - 112 q + 70) (q-1)^2.
\end{eqnarray*}
In Proposition \ref{gn_prop} we will give a general formula for $g_3(m)$.
\begin{lemm}\label{lemmalowweightcubics}
Suppose $q>2$.  Let
\begin{eqnarray*}
W^{[2]}_{C_{2,3}^\perp}(X,Y)  & = &  \sum_{i=0}^{q^2+q+1} B^{[2]}_i X^{q^2+q+1 - i } Y^i \\
W_{C_{2,3}^\perp}(X,Y) &  = &  \sum_{i=0}^{q^2+q+1} B^{2,3}_i X^{q^2+q+1 - i } Y^i.
\end{eqnarray*}
Then $B^{[2]}_0= 1,\ B^{[2]}_1 = B^{[2]}_2 = B^{[2]}_3 = B^{[2]}_4=0$, and 
\begin{eqnarray*}
B^{[2]}_5 & = & 2 B^{2,3}_5 + (q^2+q+1)\dbinom{q+1}{5}g_3(5)\\
B^{[2]}_6 & = & 2 B^{2,3}_6 + (q^2+q+1)\dbinom{q+1}{6}g_3(6)\\
B^{[2]}_7 & = & 2 B^{2,3}_7 + (q^2+q+1)\dbinom{q+1}{7}g_3(7)\\
B^{[2]}_8 & = & 2 B^{2,3}_8 + (q^2+q+1)\dbinom{q+1}{8}g_3(8) \\
& & +(q^5-q^2)\dbinom{q+1}{8} (q-1)^2+\dbinom{q^2+q+1}{2} \dbinom{q}{4}^2 (q-1)^2\\
B^{[2]}_9 & = & 2 B^{2,3}_9 + (q^2+q+1)\dbinom{q+1}{9}g_3(9)+ 2\dbinom{q^2+q+1}{2}\dbinom{q}{4}\dbinom{q}{5}(q^4-8q^2+12q-5)\\
& & +  \dbinom{q^2+q+1}{2}\dbinom{q}{4}^2(q^4-5q^2+6q-2)\\
& & +(q^5-q^2)\dbinom{q+1}{9}(q^4-11q^2+18q-8)+I_9(q) (q-1)^2,
\end{eqnarray*}
where $I_9(q)$ is given in Theorem \ref{C23LowWeightDual}.
\end{lemm}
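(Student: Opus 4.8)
The plan is to convert the computation of the low-weight coefficients of $W^{[2]}_{C_{2,3}^\perp}$ into a count of small-weight $2$-dimensional subspaces of $C_{2,3}^\perp$, and then to evaluate that count by classifying the small collections of points in $\Proj^2(\F_q)$ that fail to impose independent conditions on cubics. Specializing \eqref{EqSupp} to $C=C_{2,3}^\perp$ and using $W^{(1)}_{C_{2,3}^\perp}(X,Y)=\tfrac{1}{q-1}\bigl(W_{C_{2,3}^\perp}(X,Y)-X^{q^2+q+1}\bigr)$ gives, for every $i\ge 1$,
\[
B^{[2]}_i = (q+1)\,B^{2,3}_i + (q^2-1)(q^2-q)\,w_i,
\]
where $w_i$ denotes the number of $2$-dimensional subspaces $S\subseteq C_{2,3}^\perp$ with $\wt(S)=i$. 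Writing $(q+1)B^{2,3}_i = 2B^{2,3}_i + (q-1)B^{2,3}_i$ and inserting the formula for $B^{2,3}_i$ from Theorem~\ref{C23LowWeightDual}, the whole content of the lemma becomes the determination of $w_i$ for $i\le 9$ together with a finite polynomial identity. Since every set $T$ of at most $5$ distinct points imposes at least $\min(|T|,4)\ge|T|-1$ conditions on cubics, the space $V_T:=\{c\in C_{2,3}^\perp:\supp(c)\subseteq T\}$ has $\dim V_T=|T|-r(T)\le 1$ for such $T$; hence $w_i=0$ for $i\le 5$, and together with $B^{2,3}_i=0$ for $i\le 4$ this already yields $B^{[2]}_i=0$ for $i\le 4$ and the shape of $B^{[2]}_5$.

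For the main computation I would use the dictionary between $2$-dimensional subspaces of $C_{2,3}^\perp$ of weight $i$ and pairs $(T,S)$ with $|T|=i$, $\supp(V_T)=T$, and $S\subseteq V_T$ a $2$-dimensional subspace of full support $T$; here $\dim V_T=i-r(T)$, with $r(T)$ the number of independent conditions $T$ imposes on $\F_q[x,y,z]_3$. The key step is to classify all $T$ with $|T|\le 9$ and $r(T)\le|T|-2$, using the standard facts that a cubic through $\ge 4$ collinear points contains that line, that a cubic through $\ge 7$ points of a smooth conic contains the conic, and Cayley--Bacharach for cubics, which forces a $9$-point set lying on a net of cubics to lie on a line, on a smooth conic, or on a union of two lines. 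The configurations that end up contributing to some $w_i$ are then: the $m$ collinear points for $6\le m\le 9$; the $9$ points of a smooth conic; and the two ``two-line'' $9$-point configurations, namely $5+4$ points on two lines with their intersection point not among them, and $5+5$ points on two lines sharing exactly the intersection point. A set containing a collinear subset of size $6$, $7$, or $8$ but not itself lying in a line contributes nothing new, because there $V_T$ coincides with $V$ of that collinear subset and so has weight strictly less than $|T|$.

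For a collinear set of size $m$, $V_T$ is the dual of a generalized Reed--Solomon code, hence itself MDS of type $[m,m-4,5]$; its number of full-support $2$-dimensional subspaces follows from the MDS weight distribution by inclusion--exclusion over the $(m-1)$- and $(m-2)$-point sub-configurations, and the resulting contribution to $B^{[2]}_i$ assembles precisely into the term $(q^2+q+1)\binom{q+1}{i}g_3(i)$ (with $g_3(m)$ as in Proposition~\ref{gn_prop}). For the conic and two-line configurations $V_T$ is itself $2$-dimensional with $\supp(V_T)=T$, so each such $T$ contributes a single subspace; multiplying by the number of configurations of each type — $(q^5-q^2)\binom{q+1}{9}$ conics, $2\binom{q^2+q+1}{2}\binom{q}{5}\binom{q}{4}$ and $\binom{q^2+q+1}{2}\binom{q}{4}^2$ two-line sets — and combining with the matching $(q-1)B^{2,3}_i$-parts produces the remaining displayed terms; in particular $I_9(q)(q-1)^2$ is simply $(q-1)$ times the $(q-1)I_9(q)$ summand of $B^{2,3}_9$, which counts the $9$-point sets lying on a pencil of cubics, for which $\dim V_T=1$ so they do not enter $w_9$. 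What remains is a polynomial identity in $q$ for each $i\in\{5,\dots,9\}$, to be checked by direct expansion.

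The main obstacle is the classification together with the bookkeeping needed to pass from the condition ``$\dim V_T\ge 2$'' to ``$2$-dimensional subspace of full support $T$''. For $i=8$, and much more so for $i=9$, a single point set can contain several overlapping distinguished sub-configurations — several $5$- or $6$-point collinear subsets, an $8$-point $4+4$ sub-configuration, and so on — and to count only the subspaces of the exact weight one must determine $\supp(V_{T'})$ for every relevant $T'\subseteq T$. This is exactly what forces the nontrivial polynomial factors $q^4-8q^2+12q-5$, $q^4-5q^2+6q-2$, and $q^4-11q^2+18q-8$ appearing in the statement, and it is where the bulk of the work lies.
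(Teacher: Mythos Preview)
Your approach is correct and rests on the same geometric classification as the paper's proof (the possible supports of low-weight elements of $C_{2,3}^\perp$, and the dimension of $V_T$ for each), but it is organized differently. The paper splits $B^{[2]}_i$ as $2B^{2,3}_i$ plus the number of ordered pairs $(c_1,c_2)$ with both entries nonzero and $\wt(c_1,c_2)=i$, and then counts those pairs directly for each configuration; in particular, Proposition~\ref{gn_prop} is set up so that $g_3(m)$ \emph{is} that pair count for an $m$-point collinear set, and for each nonlinear configuration $T$ with $\dim V_T=2$ the pair count is just $(q^2-1)^2$ minus the pairs supported on proper sub-configurations, which immediately produces the factors $(q^2-1)^2-6(q-1)^2$, $(q^2-1)^2-3(q-1)^2$, $(q^2-1)^2-9(q-1)^2$, and $(q-1)^2$ appearing in the statement. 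Your route via \eqref{EqSupp} instead computes the number $w_i$ of $2$-dimensional subspaces of weight $i$ and then recombines $(q-1)B^{2,3}_i+(q^2-1)(q^2-q)w_i$ configuration-by-configuration; this is equivalent, since these two pieces are exactly the linearly dependent and linearly independent parts of the paper's pair count, but it adds an extra reassembly step (your ``finite polynomial identity'') that the paper's direct count avoids. The only place your sketch is a bit thin is the assertion that Cayley--Bacharach ``forces a $9$-point set lying on a net of cubics to lie on a line, on a smooth conic, or on a union of two lines''; that statement is true but is not Cayley--Bacharach per se, and in the paper it is obtained from the support classification of Lemma~\ref{supportcubiclemma} together with the dimension computations in Lemmas~\ref{VScubic8lemma}--\ref{VScubic9lemma}, which is also what rules out the stray cases (e.g.\ eight conic points plus one off the conic, or a $4{+}4$ plus one extra point) by showing $\dim V_T=1$ there.
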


\subsection{Supports of dual codewords and points failing to impose independent conditions}\label{sec:supports}

For any subset $S\subseteq \mathbb{P}^n$ one can consider the subspace of $\F_q[x_0,\ldots, x_n]_d$ defined by 
\[
\Gamma_{S,d}=\left\{f\in \F_q[x_0,\ldots, x_n]_d \mid f(p)=0\ \forall p\in S\right\}.
\] 
It is a well-known fact that $\Gamma_{S,d}$ has codimension at most $\# S$. If equality holds for this codimension then we say that $S$ \emph{imposes independent conditions} on degree $d$ forms, or equivalently, on degree $d$ hypersurfaces, in $\mathbb{P}^n$; otherwise we say that $S$ \emph{imposes dependent conditions} or \emph{fails to impose independent conditions}. If $S$ fails to impose independent conditions, there exist $b_p \in \F_q$, not all zero, such that $\sum_{p\in S} b_p f(p)=0$ for all $f \in \F_q[x_0,\ldots, x_n]_d$.  This produces a nonzero codeword in $C_{n,d}^\perp$ supported on the coordinates corresponding to $S$.  We conclude that supports of nonzero codewords in $C_{n,d}^{\perp}$ come from sets of points in $\mathbb{P}^n$ that fail to impose independent conditions on degree $d$ hypersurfaces.

The following classical result, which can be found in \cite{EGH}, is at the core of our analysis of low-weight codewords of $C_{2,d}^\perp$.  It gives a precise description for small sets of points that fail to impose independent condition on degree $d$ plane curves.

\begin{lemm}\label{EGHlemma}
Let $\Omega=\{p_1,\ldots, p_n\}\subset \mathbb{P}^2$ be any collection of $n\leq 2d+2$ distinct points. The points of $\Omega$ fail to impose independent conditions on curves of degree $d$ if only if either $d+2$ points of the points of $\Omega$ are collinear or $n=2d+2$ and $\Omega$ is contained in a conic.

\end{lemm}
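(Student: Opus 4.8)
The plan is to prove both directions, with the "if" direction being essentially immediate and the "only if" direction proceeding by induction on $d$. For the "if" direction: if $d+2$ of the points of $\Omega$ lie on a line $L$, then any degree $d$ curve through the remaining points of $\Omega$ and through $d+1$ of the collinear points must (by B\'ezout, since $L$ meets the curve in $d+2 > d$ points) contain $L$ as a component, hence automatically vanish at the $(d+2)$-nd collinear point; this is one dependent condition. Similarly, if $n = 2d+2$ and $\Omega$ lies on a conic $Q$, then a degree $d$ curve through any $2d+1$ of the points meets $Q$ in $2d+1 > 2d$ points, so contains $Q$, and vanishes at the last point. So in both cases $\Gamma_{\Omega, d}$ has codimension $< n$.

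For the "only if" direction, I would argue by induction on $d$. The base case $d = 1$: here $n \le 4$, and a set of at most $4$ points fails to impose independent conditions on lines iff $3$ of them are collinear ($d+2 = 3$) or $n = 4$ and all four lie on a conic — but any $4$ points lie on \emph{some} conic (the space of conics is $5$-dimensional), so the statement for $d=1$ needs the observation that $4$ points impose independent conditions on lines unless $3$ are collinear, and otherwise $n=4, \Omega \subset$ conic always holds; one checks the dimension count directly. For the inductive step, suppose $\Omega = \{p_1, \ldots, p_n\}$ with $n \le 2d+2$ fails to impose independent conditions on degree $d$ curves, and assume the result for $d - 1$. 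Choose a line $L$ through two of the points, say $p_1, p_2$, maximizing the number of points of $\Omega$ on $L$; call that number $\ell$. If $\ell \ge d+2$ we are done. Otherwise $\ell \le d+1$, and I consider $\Omega' = \Omega \setminus (\Omega \cap L)$, a set of $n - \ell$ points. The key step is to show $\Omega'$ fails to impose independent conditions on degree $d-1$ curves: given a relation $\sum_{p \in \Omega} b_p f(p) = 0$ for all $f$ of degree $d$, if some $b_p \neq 0$ with $p \notin L$, then restricting to $f = L \cdot g$ with $g$ of degree $d-1$ produces a nontrivial dependence among $\Omega'$ (one must check the coefficients $b_p L(p)$ for $p \in \Omega'$ are not all zero, using that the relation restricted to $\Omega \cap L$ alone cannot be a relation on degree $d$ curves unless $\ell \ge d+2$, which would follow from a separate one-variable argument on the line). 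Since $n - \ell \le 2d + 2 - \ell$ and we need $n - \ell \le 2(d-1) + 2 = 2d$, i.e. $\ell \ge 2$, which holds, the inductive hypothesis applies: either $d+1$ points of $\Omega'$ are collinear, or $n - \ell = 2d$ and $\Omega'$ lies on a conic. One then has to promote these conclusions back to statements about $\Omega$ and reconcile them with the maximality of $\ell$ and the bound $n \le 2d+2$, ruling out all configurations except the two claimed.

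The main obstacle, and the part requiring genuine care rather than routine bookkeeping, is the inductive step's case analysis: after applying the inductive hypothesis to $\Omega'$ one obtains either a $(d+1)$-point collinear subset of $\Omega'$ or a conic through the $2d$ points of $\Omega'$, and in each case one must argue — using the maximality of $\ell$, the interaction between $L$ and the newly found line or conic, and B\'ezout-type counting — that $\Omega$ itself must have $d+2$ collinear points or $n = 2d+2$ points on a conic. For instance, from a $(d+1)$-point collinear subset of $\Omega'$ on a line $M$, maximality of $\ell$ forces $M$ to contain at most $\ell$ points of $\Omega$, so $\ell \ge d+1$; combined with $\ell \le d+1$ this gives $\ell = d+1$ and then a delicate argument (possibly choosing a different auxiliary line, or using that $\Omega$ now has two lines each with $d+1$ points and $n \le 2d+2$) yields the conic. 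Handling the degenerate sub-cases (few points, points at the intersection $L \cap M$, conics that degenerate to line pairs) is where most of the work lies, and I would lean on the explicit reference \cite{EGH} for the cleanest bookkeeping. Since the lemma is quoted as classical, in the write-up I would either reproduce this induction in full or simply cite \cite{EGH} and move on; given the paper's style I expect the latter.
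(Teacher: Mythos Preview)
The paper does not prove this lemma; it simply cites it as a classical result from \cite{EGH}, exactly as you anticipated in your final paragraph. Your inductive sketch is along the right lines (and Case A of your inductive step closes cleanly: $d+1$ points on $L$ and $d+1$ disjoint points on $M$ force $n=2d+2$ and $\Omega\subset L\cup M$), though Case B---where $\ell=2$, $n=2d+2$, and $\Omega'$ sits on a conic $Q$ that need not contain the two points on $L$---is not yet finished in what you wrote and would require a further argument (e.g., varying the choice of auxiliary line or invoking the residuation arguments in \cite{EGH}); since the paper defers entirely to the reference, that gap is moot here.
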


\begin{remark} This lemma implies that there are no nonzero codewords of weight less then $d+2$ in $C_{2,d}^{\perp}$.

\end{remark}

Using Lemma \ref{EGHlemma} we deduce the following geometric characterizations for the support of low-weight codewords of $C_{2,2}^\perp$ and $C_{2,3}^\perp$.

\begin{lemm}\label{supportconiclemma}
Let $c \in C_{2,2}^{\perp}$ be nonzero with $\wt(c) = k$. Then $k\geq 4$ and moreover:
\begin{itemize}
\item If $k \in \{4, 5\}$ the support of $c$ consists of $k$ collinear points.
\item If $k=6$ then the support of $c$ is either:
\begin{itemize}
\item $6$ collinear points, 
\item $6$ points on a smooth conic, or 
\item $6$ points on two lines, with $3$ on each, and not including the intersection point of the two lines.
\end{itemize}
\end{itemize}

\end{lemm}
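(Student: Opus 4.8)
The plan is to pass from the coding-theoretic hypothesis to a geometric one and then run a uniform ``splitting conic'' argument. A nonzero $c \in C_{2,2}^{\perp}$ of weight $k$ encodes a relation $\sum_{p \in \supp(c)} b_p\, f(p) = 0$, valid for every $f \in \F_q[x,y,z]_2$, in which $\supp(c)$ is a set of exactly $k$ distinct $\F_q$-points and \emph{every} coefficient $b_p$ is nonzero; thus $\supp(c)$ fails to impose independent conditions on conics, and by the Remark after Lemma~\ref{EGHlemma} we get $k \ge 4$. For $k \in \{4,5\}$ we have $k < 2d+2 = 6$, so Lemma~\ref{EGHlemma} forces $d+2 = 4$ of the points to lie on a line $L$; when $k = 4$ that is the whole support and we are done, so only the case $k=5$ with the fifth point $p$ off $L$ must be excluded.

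The recurring device is this: if $p \in \supp(c)$ lies off a line $L$ that contains every point of $\supp(c)$ except at most one exceptional point $p'$, choose a line $M$ through $p'$ (or any line, if there is no such $p'$) with $p \notin M$ — this is possible because through any point there are $q+1 \ge 2$ lines, so at least one avoids a prescribed second point. Then the reducible conic $L \cup M$ vanishes on $\supp(c) \setminus \{p\}$ but not at $p$, and substituting it into the relation yields $b_p = 0$, contradicting $b_p \ne 0$. Applied to the four collinear points in the case $k = 5$, this forces the fifth point onto $L$, so $\supp(c)$ is five collinear points.

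For $k = 6 = 2d+2$, Lemma~\ref{EGHlemma} leaves two (overlapping) possibilities: some four of the points are collinear, or all six lie on a conic. In the first case, fix a line $L$ through four of the points; any point of $\supp(c)$ off $L$ would leave $\supp(c)$ with at most one further point off $L$, so the splitting-conic argument applies and yields a contradiction — hence all six points lie on $L$. In the second case we may assume no four points are collinear. I would first descend the conic to $\F_q$: the space of conic forms over $\overline{\F_q}$ vanishing on the six (Galois-stable) points is itself Galois-stable and nonzero, hence meets $\F_q[x,y,z]_2$ nontrivially, so the six points lie on a conic $Q$ defined over $\F_q$. Using the classification of $\F_q$-conics recalled before Proposition~\ref{WC22prop}: $Q$ cannot be a double line (its $\F_q$-points are $q+1 \ge 4$ collinear points, impossible once no four are collinear, for $q \ge 3$) nor a pair of Galois-conjugate lines (only one $\F_q$-point), so $Q$ is either a smooth conic — giving the ``six points on a smooth conic'' configuration — or a pair of distinct $\F_q$-rational lines $L_1 \cup L_2$. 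In the latter case each line carries at most three of the six points; if the intersection point $L_1 \cap L_2$ lay in $\supp(c)$, the remaining five points would split as at most two on $L_1 \setminus\{p\}$ and at most two on $L_2\setminus\{p\}$, contradicting $5 > 4$, so the intersection point is not in $\supp(c)$ and the six points split evenly, three on each line — exactly the third listed configuration. (For $q = 2$ the hypothesis $k \in \{4,5,6\}$ is vacuous, since no four $\F_2$-points are collinear and no six lie on an $\F_2$-conic.)

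The substantive point is not any single calculation but organizing the $k = 6$ analysis so that the three outcomes are visibly exhaustive and matched to the statement; the geometric inputs (Lemma~\ref{EGHlemma}, the descent of $Q$ to $\F_q$, and the list of $\F_q$-conic types) carry the weight, and the splitting-conic trick eliminates every configuration with a point off a distinguished line.
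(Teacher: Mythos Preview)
Your proof is correct but follows a genuinely different line from the paper's. The paper argues by \emph{codeword subtraction}: for $k=5$ it takes a weight-$4$ codeword $c'$ supported on the four collinear points, chooses $\alpha$ so that $c-\alpha c'$ kills one coordinate, and then applies the $k=4$ case to the resulting weight-$\le 4$ codeword; the $k=6$ step ``four collinear $\Rightarrow$ all six collinear'' is handled the same way, and the remaining $k=6$ configurations are read off directly from Lemma~\ref{EGHlemma} without further comment. You instead use a direct \emph{test-function} argument, exhibiting an explicit reducible conic $L\cup M$ that vanishes on $\supp(c)\setminus\{p\}$ but not at $p$, forcing $b_p=0$; and for the ``six on a conic, no four collinear'' branch you spell out the $\F_q$-conic classification to land on the three listed outcomes. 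Your approach is more self-contained (it does not invoke the existence of weight-$4$ dual codewords, and it makes explicit why the singular-conic possibilities collapse to the $3{+}3$ configuration), while the paper's subtraction technique is what scales cleanly to the analogous Lemma~\ref{supportcubiclemma} for $C_{2,3}^\perp$, where the induction on weight is used repeatedly. One minor remark: your Galois-descent step for the conic $Q$ is correct but unnecessary here, since $\Gamma_{S,2}$ is already an $\F_q$-vector space and its positive dimension directly produces an $\F_q$-rational conic through the six points.
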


\begin{proof} 
Lemma \ref{EGHlemma} implies that a nonzero codeword of $C_{2,2}^\perp$ has weight at least $4$. When $k=4$ the $4$ points of the support must be collinear.

\vspace{.25cm}

\noindent{\textbf{k=5}}:  Let $\{P_1,\ldots, P_5\}$ be the points of $\Proj^2(\F_q)$ in the support of $c$.  Lemma \ref{EGHlemma} implies that at least $4$ of these $5$ points are collinear.  Without loss of generality suppose that $P_1,P_2,P_3,P_4$ are collinear.  There is a $c' \in C_{2,2}^\perp$ with $\wt(c') = 4$ supported on $P_1, P_2, P_3,P_4$.  There exists an $\alpha \in \F_q^*$ such that the coordinate corresponding to $P_1$ in $c - \alpha c'$ is equal to $0$.  Therefore, $c - \alpha c'$ is a nonzero codeword of weight at most $4$ supported on $P_2,P_3,P_4,P_5$.  By the result for $k=4$, these points are collinear.

\vspace{.25cm}

\noindent{\textbf{k=6}}:  Let $\{P_1,\ldots, P_6\}$ be the support of $c$.  By Lemma \ref{EGHlemma} we need only show that if at least $4$ of these points are collinear, then they are all collinear.  

Without loss of generality, suppose that $P_1,P_2, P_3, P_4$ are collinear.  As in the case $k=5$, there is a $c' \in C_{2,2}^\perp$ with $\wt(c') = 4$ supported on $P_1, P_2, P_3,P_4$, and there is an $\alpha \in \F_q^*$ such that the coordinate corresponding to $P_1$ in $c - \alpha c'$ is equal to $0$.  We see that $c - \alpha c'$ is a nonzero codeword of weight at most $5$ whose support includes $P_5$ and $P_6$.  By the result for $k\le 5$, the points of the support are collinear. Therefore, all $6$ points are collinear.

\end{proof}

\begin{lemm} \label{supportcubiclemma}
Let $c \in C_{2,3}^{\perp}$ be nonzero with $\wt(c) = k$. Then $k\geq 5$ and moreover:
\begin{itemize}
\item If $k \in \{5,6,7\}$ the support of $c$ consists of $k$ collinear points.
\item If $k=8$ then the support of $c$ is either:
\begin{itemize}
\item $8$ collinear points, 
\item $8$ points on a smooth conic, or 
\item $8$ points on two lines, with $4$ on each, and not including the intersection point of the two lines.
\end{itemize}
\item If $k=9$ then the support of $c$ is either:
\begin{itemize}
\item $9$ collinear points, 
\item $9$ points on a smooth conic, 
\item $9$ points on two lines, with $5$ points on the first line and $4$ on the second line, and not including the intersection points of the two lines,
\item $9$ points on two lines consisting of the intersection point of the two lines together with $4$ additional points on each, or
\item $9$ points that are exactly the intersection points of two cubics , i.e., the two cubics share no common component.
\end{itemize}
\end{itemize}

\end{lemm}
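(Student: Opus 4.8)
The plan is to build everything on the dictionary from Section~\ref{sec:supports}: the support of a nonzero $c\in C_{2,3}^{\perp}$ is precisely a subset of $\Proj^2(\F_q)$ that fails to impose independent conditions on cubics. The bound $k\ge 5$ and the cases $k\in\{5,6,7\}$ will come directly from Lemma~\ref{EGHlemma} with $d=3$: when $k<2d+2=8$, such a set fails only if $d+2=5$ of its points are collinear, and for $k=5$ those are all of them. For $k\in\{6,7\}$ I would upgrade ``$5$ collinear'' to ``all $k$ collinear'' via the subtraction trick from the proof of Lemma~\ref{supportconiclemma}: if $P_1,\ldots,P_5\in\supp(c)$ lie on a line $L$, there is a weight-$5$ codeword $c'$ supported on exactly $\{P_1,\ldots,P_5\}$ (the restriction of cubic forms to $L$ surjects onto the binary cubics on $L$, so the relation space of these five evaluations is one-dimensional, and it has no vanishing coordinate since any four distinct points of $\Proj^1$ impose independent conditions on binary cubics); subtracting the multiple of $c'$ that kills the $P_1$-coordinate of $c$ yields a nonzero codeword of weight $\le k-1$ whose support contains $P_6,\ldots,P_k$ and, by induction, is collinear. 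Since that support has weight $\ge 5$ while only $k-5\le 2$ of its points are new, it meets $\{P_2,\ldots,P_5\}$ in at least two points, hence lies on $L$, forcing all of $\supp(c)$ onto $L$.

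For $k=8$ Lemma~\ref{EGHlemma} still applies and gives two cases. If $5$ points of $\supp(c)$ are collinear, the same subtraction trick, now reducing to a codeword of weight $\le 7$ (collinear by the previous paragraph) and using the same ``at least two shared points'' argument, forces all $8$ points onto one line. Otherwise all $8$ lie on a conic; a conic with at least $8$ rational points is smooth (giving the ``$8$ on a smooth conic'' case), a double line (giving $8$ collinear), or a union of two distinct $\F_q$-lines (a conjugate pair of lines has a single rational point). In the two-line case I would check that the intersection point cannot lie in $\supp(c)$: otherwise the remaining $7$ points would split $\le 3+\le 3$, sending us back to $5$ collinear, or would number fewer than $8$. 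This leaves exactly the ``$4+4$, not through the intersection'' configuration.

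The substantive case is $k=9$, where Lemma~\ref{EGHlemma} no longer applies and I would split into two cases. In \emph{Case A}, some proper subset of $\Omega:=\supp(c)$ fails to impose independent conditions on cubics, so there is a nonzero $c'\in C_{2,3}^{\perp}$ of weight at most $8$ with $\supp(c')\subseteq\Omega$; killing one coordinate of $c'$ in $c$ gives $c'':=c-\alpha c'\neq 0$ (otherwise $\wt(c)=\wt(c')\le 8$) of weight at most $8$, with $\Omega=\supp(c')\cup\supp(c'')$. By the cases already handled, each of $\supp(c'),\supp(c'')$ is a set of between $5$ and $8$ collinear points, $8$ points on a smooth conic, or $8$ points split $4+4$ on two lines. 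I would then run through the finitely many unordered pairs of these types, bounding $|\supp(c')\cap\supp(c'')|$ by B\'ezout (two distinct lines meet in one point, a line and a smooth conic in at most two, two distinct smooth conics in at most four, etc.); each pair compatible with $|\supp(c')\cup\supp(c'')|=9$ forces $\Omega$ to be one of: $9$ collinear points, $9$ points on a smooth conic, $9$ points split $5+4$ on two lines and avoiding their intersection, or the intersection point of two lines together with $4$ more on each. In \emph{Case B}, every proper subset of $\Omega$ imposes independent conditions; then $\dim\Gamma_{\Omega,3}=2$ (if it were $\ge 3$, deleting a point of $\Omega$ would leave an $8$-point set still failing), so the cubics through $\Omega$ form a pencil $\langle F,G\rangle$. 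Let $h=\gcd(F,G)$, defined over $\F_q$. If $\deg h=1$, write $F=hQ_1$, $G=hQ_2$ with $Q_1,Q_2$ coprime conics; then $|\{Q_1=0\}\cap\{Q_2=0\}|\le 4$, so at least $5$ points of $\Omega$ lie on the line $\{h=0\}$ — but $5$ collinear points fail to impose independent conditions on cubics, contradicting Case~B. If $\deg h=2$, then $\Omega$ lies on $\{h=0\}$ except for at most one point, giving an $8$-point subset on a conic, which also fails — again a contradiction. Hence $\deg h=0$: $F$ and $G$ share no component, so by B\'ezout $\{F=0\}\cap\{G=0\}$ is a zero-dimensional scheme of degree $9$ containing the $9$ distinct points of $\Omega$, whence it equals $\Omega$ and $\Omega$ is exactly the set of common zeros of two cubics with no common component.

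The hard part will be the casework in Case~A of the $k=9$ argument: there are many type-pairs for $(\supp(c'),\supp(c''))$ to dispose of, and in each one must not only pin down which lines or conic carry $\Omega$ but also track whether the intersection point of a pair of lines lands inside or outside $\Omega$, so as to match the precise list of configurations. I would also flag that for small $q$ — notably $q=3$, where $5$ collinear points and $8$ points on a conic simply do not occur — several of these branches are vacuous, and the genuinely uniform input is the complete-intersection analysis of Case~B.
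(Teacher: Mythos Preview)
Your proposal is correct and follows essentially the same architecture as the paper: Lemma~\ref{EGHlemma} plus the subtraction trick for $k\le 8$, and for $k=9$ the dichotomy into ``some $8$-subset fails'' versus ``every $8$-subset imposes independent conditions.'' Two organizational differences are worth noting. In your Case~A you propose to pick an arbitrary low-weight $c'$ supported in $\Omega$ and then enumerate all type-pairs $(\supp(c'),\supp(c''))$; the paper instead first applies Lemma~\ref{EGHlemma} to the failing $8$-subset to pin down its shape (smooth conic, $4{+}4$ on two lines, or containing $5$ collinear points) and chooses $c'$ accordingly, which trims the casework considerably. Your route works---the B\'ezout bounds you cite kill every incompatible pair---but you will find the bookkeeping cleaner if you first classify the failing $8$-subset. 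In Case~B your $\gcd$ formulation is a tidy equivalent of the paper's direct argument about common components: the paper observes that a shared $\F_q$-line would carry at least $5$ of the nine points and a shared conic at least $8$, contradicting the hypothesis that all $8$-subsets impose independent conditions; your degree casework on $h=\gcd(F,G)$ says exactly the same thing.
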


\begin{remark}\label{Chaslesremark}
The fact that a collection of $9$ points that are the intersection points of two cubics fails to impose independent conditions on cubics is due to Chasles, although is colloquially known as the Cayley-Bacharach theorem \cite[Theorem CB3]{EGH}.  We give a different form of this result in Proposition \ref{Hart4.5} in Appendix \ref{AppendixA}.
\end{remark}

\begin{proof} 
Lemma \ref{EGHlemma} implies that a nonzero codeword of $C_{2,3}^\perp$ has weight at least $5$.  When $k=5$ the $5$ points of the support must be collinear.

\vspace{.25cm}

\noindent{\textbf{k=6}}:  Let $\{P_1,\ldots, P_6\}$ be the points of $\Proj^2(\F_q)$ in the support of $c$.  Lemma \ref{EGHlemma} implies that $5$ of these points are collinear.  Without loss of generality suppose that $P_1,\ldots, P_5$ are collinear.  There is a $c' \in C_{2,3}^\perp$ with $\wt(c') = 5$ supported on $\{P_1 \ldots P_5\}$.  There exists an $\alpha \in \F_q^*$ such that the coordinate corresponding to $P_1$ in $c - \alpha c'$ is equal to $0$.  Therefore, $c - \alpha c'$ is a nonzero codeword of weight at most $5$ supported on $\{P_2, \ldots, P_6\}$ whose support includes $P_6$.  By the result for $k=5$, these points are collinear.

\vspace{.25cm}

\noindent{\textbf{k=7}}:  Let $\{P_1,\ldots, P_7\}$ be the support of $c$.  Lemma \ref{EGHlemma} implies that $5$ of these points are collinear.  Without loss of generality suppose that $P_1,\ldots, P_5$ are collinear.  There is a $c' \in C_{2,3}^\perp$ with $\wt(c') = 5$ supported on $\{P_1 \ldots P_5\}$.  There exists an $\alpha \in \F_q^*$ such that the coordinate corresponding to $P_1$ in $c - \alpha c'$ is equal to $0$.  Therefore, $c - \alpha c'$ is a nonzero codeword of weight at most $6$ supported on $\{P_2, \ldots, P_7\}$ whose support includes $P_6$ and $P_7$.  By the result for $k \le 6$, these points are collinear.

\vspace{.25cm}

\noindent{\textbf{k=8}}:  Let $\{P_1,\ldots, P_8\}$ be the support of $c$.  By Lemma \ref{EGHlemma} we need only show that if at least $5$ of these points are collinear, then they are all collinear.

Without loss of generality, suppose that $P_1,\ldots, P_5$ are collinear.  As in the case $k=7$, there is a $c' \in C_{2,3}^\perp$ with $\wt(c') = 5$ supported on $\{P_1, \ldots, P_5\}$, and there is an $\alpha \in \F_q^*$ such that the coordinate corresponding to $P_1$ in $c - \alpha c'$ is equal to $0$.  We see that $c - \alpha c'$ is a nonzero codeword of weight at most $7$ supported on $\{P_2, \ldots, P_8\}$ whose support includes $P_6, P_7$, and $P_8$.  By the result for $k \le 7$, these points are collinear.  

\vspace{.25cm}

\noindent{\textbf{k=9}}:  Let $\{P_1,\ldots, P_9\}$ be the support of $c$.  We break this argument into two cases.  In the first case, suppose that all subsets of $8$ of these points impose independent conditions on cubics.  We will show that there must be distinct cubic curves that intersect exactly at these $9$ points.  In the second case, we suppose that there exists a subset of $8$ of these points that imposes dependent conditions on cubics.  We will show that the $9$ points must be in one of the other configurations given in the statement.

\vspace{.25cm}

\noindent{\textbf{Case I}}:  Take any subset of $8$ points out of the $9$ points, say $\{P_1,\ldots, P_8\}$. They impose independent conditions on cubics, so there is a $2$-dimensional vector space of cubic polynomials that vanish at these points.  Let $f_1$ and $f_2$ be a basis for this vector space.  Since $\{P_1,\ldots, P_9\}$ fails to impose independent conditions on cubics, the dimension of the vector space of cubic polynomials vanishing at these points is at least $2$.   We conclude that this space has dimension $2$, and $\{f_1, f_2\}$ is a basis for it.  Let $C_1$ be the cubic defined by $f_1$ and $C_2$ be the cubic defined by $f_2$.  We see that $\{P_1,\ldots, P_9\} \subseteq C_1 \cap C_2$.  Two cubic curves that share a common a pair of Galois-conjugate lines defined over $\F_{q^2}$ but not over $\F_q$ intersect in at most $2\ \F_q$-points.  Therefore, if $C_1$ and $C_2$ share a common component, it must be either an $\F_q$-rational line or a (possibly reducible) conic.  If two distinct cubic curves intersect in at least $9\ \F_q$-points and share a unique $\F_q$-rational line, then that line must contain at least of the $5$ common $\F_q$-points.  If two distinct cubic curves intersect in at least $9\ \F_q$-points and share a common conic, then that conic must contain at least $8$ of the common $\F_q$-points.  Therefore, if $C_1$ and $C_2$ share a common component, it cannot be the case that every subset of $8$ of the $9$ points imposes independent conditions on cubics.

\vspace{.25cm}

\noindent{\textbf{Case II}}:  Suppose there is a subset of $8$ points from $\{P_1,\ldots, P_9\}$ that fails to impose independent conditions on cubics.  Without loss of generality, suppose this subset is $\{P_1,\ldots, P_8\}$.  By Lemma \ref{EGHlemma}, either all $8$ points lie on a conic, or at least $5$ of the points are collinear.

Suppose that $\{P_1,\ldots, P_8\}$ lie on a conic.  We first consider the case where that conic is smooth.  There is a codeword $c' \in C_{2,3}^\perp$ of weight $8$ supported on $\{P_1, \ldots, P_8\}$, and there is an $\alpha \in \F_q^*$ such that the coordinate corresponding to $P_1$ in $c - \alpha c'$ is equal to $0$.  Therefore, $c - \alpha c'$ is a nonzero codeword of weight at most $8$ supported on $\{P_2, \ldots, P_9\}$ with support including $P_9$.  It is clear that no $4$ points of $\{P_2,\ldots, P_9\}$ are collinear.  By the result for $k=8$, these points all lie on a smooth conic.

Suppose that $\{P_1,\ldots, P_8\}$ lie on a conic, but not on a smooth conic, and no $5$ of these points are collinear.  These $8$ points must lie on two lines with $4$ on each, and not including the intersection point.  Without loss of generality suppose $P_1,\ldots, P_4$ are collinear and $P_5,\ldots, P_8$ are collinear.  There is a codeword $c' \in C_{2,3}^\perp$ of weight $8$ supported on $\{P_1, \ldots, P_8\}$, and there is an $\alpha \in \F_q^*$ such that the coordinate corresponding to $P_1$ in $c - \alpha c'$ is $0$. Therefore, $c - \alpha c'$ is a codeword of weight at most $8$ supported on $\{P_2,\ldots, P_9\}$ with support including $P_9$.  By the result for $k \le 8$, either $5$ of these points are collinear, or these $8$ points lie on two lines with $4$ on each, and not including the intersection point. In this second case, $P_2,P_3,P_4$ and $P_9$ must be collinear, so $\{P_1,\ldots, P_9\}$ contains $5$ collinear points.

Now suppose that $5$ points of $\{P_1,\ldots, P_9\}$ are collinear.  We show that either all of the points are collinear, or the points lie on two lines with at least $4$ points on each.  Suppose $P_1,\ldots, P_5$ are collinear. There is a codeword $c' \in C_{2,3}^\perp$ of weight $5$ supported on $\{P_1, \ldots, P_5\}$, and there is an $\alpha \in \F_q^*$ such that the coordinate corresponding to $P_1$ in $c - \alpha c'$ is equal to $0$.  Therefore, $c - \alpha c'$ is a nonzero codeword of weight at most $8$ supported on $\{P_2, \ldots, P_9\}$ with support including $P_6, P_7, P_8$, and $P_9$.  Since $P_2,\ldots, P_5$ are collinear, the result for $k \le 8$ implies that $P_6,\ldots, P_9$ are collinear.  Either $P_2,\ldots, P_9$ are all collinear, which implies all $9$ points are collinear, or $P_2,\ldots, P_5$ and $P_6,\ldots, P_9$ lie on two different lines, not including the intersection point.  In this case, $P_1,\ldots, P_9$ lie on two lines with at least $4$ points on each.



\end{proof}

\subsection{Counting Codewords with Given Support}

As we see in the statements of Lemma \ref{supportconiclemma} and Lemma \ref{supportcubiclemma}, many of the low-weight codewords of $C_{2,2}^\perp$ and $C_{2,3}^\perp$ come from collections of collinear points. In this section we count the number of weight $k$ codewords supported on a set of collinear points.

\begin{prop}\label{Vdmdim}
Let $V_{d,m}$ the vector space of codewords in $C_{2,d}^\perp$ supported on a collection of $m \le q+1$ collinear points. Then $\dim(V_{d,m})=\max(m-d-1,0)$.
\end{prop}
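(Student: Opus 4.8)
The plan is to reduce everything to computing the rank of a restricted evaluation map. Fix a set $S=\{P_1,\dots,P_m\}$ of $m$ distinct collinear points of $\mathbb{P}^2(\F_q)$ (the dimension will turn out to depend only on $m$ and $d$, consistent with the notation $V_{d,m}$), and let $\operatorname{ev}_S\colon \F_q[x,y,z]_d\to \F_q^S$ be the composite of $\operatorname{ev}$ with projection onto the coordinates indexed by $S$. A codeword $c\in C_{2,d}^\perp$ is supported inside $S$ exactly when its restriction $(c_p)_{p\in S}\in\F_q^S$ is orthogonal to $\operatorname{im}(\operatorname{ev}_S)$, since $\langle c,\operatorname{ev}(f)\rangle=\sum_{p\in S}c_p f(p')=\langle (c_p)_{p\in S},\operatorname{ev}_S(f)\rangle$. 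Hence $V_{d,m}\cong(\operatorname{im}\operatorname{ev}_S)^\perp\subseteq\F_q^S$, so
\[
\dim V_{d,m}=m-\operatorname{rank}(\operatorname{ev}_S),
\]
and it suffices to prove $\operatorname{rank}(\operatorname{ev}_S)=\min(m,d+1)$.

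The key step is to restrict to the line $L$ containing $S$. Choosing coordinates with $L=\{z=0\}$, the value of $f\in\F_q[x,y,z]_d$ at a point of $L$ depends only on $f(x,y,0)$, so $\operatorname{ev}_S$ factors as
\[
\F_q[x,y,z]_d\twoheadrightarrow\F_q[x,y]_d\xrightarrow{\ \operatorname{ev}_S^{(1)}\ }\F_q^S,
\]
where the first map $f\mapsto f(x,y,0)$ is surjective (the inclusion $\F_q[x,y]_d\hookrightarrow\F_q[x,y,z]_d$ is a section) and $\operatorname{ev}_S^{(1)}$ is evaluation of binary forms of degree $d$ at the $m$ points of $S$, viewed as points of $L\cong\mathbb{P}^1$. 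Therefore $\operatorname{rank}(\operatorname{ev}_S)=\operatorname{rank}(\operatorname{ev}_S^{(1)})$, and we have reduced to a statement about the Reed--Muller (equivalently, Reed--Solomon) code $C_{1,d}$ on $\mathbb{P}^1$.

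Now I would compute $\operatorname{rank}(\operatorname{ev}_S^{(1)})$ in two cases. If $m\ge d+1$, then $\operatorname{ev}_S^{(1)}$ is injective, because a nonzero binary form of degree $d$ has at most $d$ roots in $\mathbb{P}^1(\overline{\F_q})$ and so cannot vanish at the $m\ge d+1$ distinct points of $S$; since $\dim\F_q[x,y]_d=d+1$, the rank equals $d+1$. If $m\le d+1$, then $\operatorname{ev}_S^{(1)}$ is surjective: for each $i$, take the product of the $m-1$ linear forms vanishing at the $P_j$ with $j\ne i$, and pad with $d-m+1$ further linear forms not vanishing at $P_i$ (such forms exist since $\mathbb{P}^1(\F_q)$ has a point other than $P_i$); this gives $g_i\in\F_q[x,y]_d$ with $\operatorname{ev}_S^{(1)}(g_i)$ a nonzero multiple of the $i$-th standard basis vector, so the image is all of $\F_q^S$ and the rank equals $m$. (Alternatively one can invoke that $C_{1,d}$ is MDS, so any $\le d+1$ coordinates are information positions.) In both cases $\operatorname{rank}(\operatorname{ev}_S^{(1)})=\min(m,d+1)$, whence $\dim V_{d,m}=m-\min(m,d+1)=\max(m-d-1,0)$.

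There is no serious obstacle here: the whole content of the argument is the reduction to the line $L$ via restriction of forms. The only point needing a little care is bookkeeping on $q$ versus $m$ and $d$; but since $m\le q+1$ by hypothesis, the case $m\ge d+1$ already forces $q\ge d$, and the case $m\le d+1$ requires only that a projective line over $\F_q$ contain more than one point, so no hypothesis beyond those standing in the paper is needed.
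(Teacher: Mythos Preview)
Your proof is correct and takes a genuinely different route from the paper's. The paper argues by induction on $m$: the case $m\le d+1$ is read off from Lemma~\ref{EGHlemma}, the base case $m=d+2$ is handled by showing any two nonzero codewords are dependent (subtracting a multiple to kill one coordinate yields weight $\le d+1$, hence zero), and the induction step uses the coordinate-at-$P_{m+1}$ map $\phi\colon V_{d,m+1}\to\F_q$ with kernel $V_{d,m}$, surjectivity coming again from Lemma~\ref{EGHlemma}. Your argument instead identifies $V_{d,m}$ with $(\operatorname{im}\operatorname{ev}_S)^\perp$ and computes the rank directly by restricting forms to the line $L$, reducing the question to the standard Vandermonde/Reed--Solomon fact about binary forms of degree $d$ on $\mathbb{P}^1$.

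Your approach is more self-contained: it does not invoke Lemma~\ref{EGHlemma} at all, replacing it with the elementary fact that a nonzero degree $d$ form on $\mathbb{P}^1$ has at most $d$ zeros. It also makes transparent why the answer is what it is---the problem is literally the dual of a Reed--Solomon code after restriction---whereas the paper's induction, while short, somewhat obscures this. On the other hand, the paper's proof fits the local narrative, since Lemma~\ref{EGHlemma} is already in hand and is the organizing tool for the whole section. One small point worth making explicit in your write-up: the reduction to $L=\{z=0\}$ is legitimate because $\operatorname{rank}(\operatorname{ev}_S)$ is invariant under the $\GL_3(\F_q)$ action $f\mapsto f\circ\phi$ on forms, so the specific choice of coordinates (and of affine representatives, which for points on $\{z=0\}$ automatically have third coordinate zero) does not affect the computation.
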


\begin{proof} 
Let $P_1,\ldots, P_m$ be the collection of collinear points defining $V_{d,m}$.

The statement for $m\leq d+1$ is immediate from Lemma \ref{EGHlemma}, so we assume $m\geq d+2$. We prove this proposition by induction on $m$. 

Let $m=d+2$. By Lemma \ref{EGHlemma}, there is a nonzero $c_1 \in V_{d,m}$ supported on $P_1,\ldots, P_m$ and for every nonzero element of $V_{d,m}$ the coordinate corresponding each $P_i$ is nonzero. Suppose that $c_2 \in V_{d,m}$ where $c_1$ and $c_2$ are linearly independent.  Since $c_2$ must be nonzero, the coordinate corresponding to $P_1$ in $c_2$ is nonzero. There is an $\alpha \in \F_q$ such that the coordinate corresponding to $P_1$ in $c_1 - \alpha c_2$ is $0$. Since $c_1 - \alpha c_2$ is a codeword supported on a set of at most $d+1$ collinear points, Lemma \ref{EGHlemma} implies that it is the zero codeword.  This contradicts the assumption that $c_1, c_2$ were linearly independent.

For the induction step, suppose that $P_{m+1}$ is on the same line as $\{P_1,\ldots, P_m\}$.  Obviously $V_{d,m} \subseteq V_{d,m+1}$. Let $\phi \colon V_{d,m+1} \to \F_q$ be the map that takes an element of $V_{d,m+1}$ to the coordinate corresponding to $P_{m+1}$. It is clear that $\ker(\phi) = V_{d,m}$.  We need only show that $\phi$ is surjective, which is equivalent to saying that $\phi$ is nonzero.  By Lemma \ref{EGHlemma}, there is a nonzero $c \in C_{2,d}^\perp$ supported on $P_1,\ldots, P_{d+1}, P_{m+1}$, so $\wt(c) = d+2$.  This implies $\phi(c) \neq 0$.  We conclude that $V_{d,m+1}/V_{d,m}\cong \mathbb{F}_q$, and~thus 
\[
\dim(V_{d,m+1})=\dim(V_{d,m})+1=m-d-1+1=(m+1)-d-1.
\]
\end{proof}

\begin{prop}\label{fdmprop}
Let $d+2\le m \le q+1$.  The number of $c\in V_{d,m}$ with $\wt(c)=m$ is
\[
f_d(m)=\sum_{i=0}^{m-d-2} (q^{m-d-1-i}-1)\dbinom{m}{i}(-1)^{i}.
\]	
\end{prop}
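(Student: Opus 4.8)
The plan is to compute $f_d(m)$, the number of weight-$m$ codewords in $V_{d,m}$, by an inclusion–exclusion argument over which coordinates among the $m$ collinear points $P_1,\ldots,P_m$ are allowed to vanish. For a subset $T\subseteq\{P_1,\ldots,P_m\}$, let $V_{d,m}(T)$ denote the subspace of codewords in $V_{d,m}$ whose support is contained in $\{P_1,\ldots,P_m\}\setminus T$; this is exactly the space of codewords of $C_{2,d}^\perp$ supported on the $m-|T|$ collinear points $\{P_1,\ldots,P_m\}\setminus T$. By Proposition \ref{Vdmdim}, $\dim V_{d,m}(T) = \max(m-|T|-d-1,\,0)$, so $|V_{d,m}(T)| = q^{\max(m-|T|-d-1,0)}$, which depends only on $|T|$. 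A codeword $c\in V_{d,m}$ has $\wt(c)=m$ exactly when $c$ lies in $V_{d,m}$ but in none of the subspaces $V_{d,m}(\{P_j\})$, $j=1,\ldots,m$. Therefore by inclusion–exclusion,
\[
f_d(m) \;=\; \sum_{i=0}^{m}(-1)^i \binom{m}{i}\, q^{\max(m-i-d-1,0)}.
\]

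The second step is to simplify this alternating sum. For $i \ge m-d-1$ the exponent $\max(m-i-d-1,0)$ is $0$, so those terms contribute $\sum_{i\ge m-d-1}(-1)^i\binom{m}{i}$; for $i \le m-d-2$ the term is $(-1)^i\binom{m}{i} q^{m-d-1-i}$. I would split off the constant part and use the identity $\sum_{i=0}^{m}(-1)^i\binom{m}{i}=0$ (valid since $m\ge d+2\ge 2$) to rewrite $\sum_{i\ge m-d-1}(-1)^i\binom{m}{i} = -\sum_{i=0}^{m-d-2}(-1)^i\binom{m}{i}$. Combining the two pieces gives
\[
f_d(m) \;=\; \sum_{i=0}^{m-d-2}(-1)^i\binom{m}{i}\bigl(q^{m-d-1-i}-1\bigr),
\]
which is exactly the claimed formula.

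The only genuine subtlety is justifying the inclusion–exclusion cleanly at the level of \emph{counting codewords} rather than sets: one must check that for each $T$, the set of codewords supported away from $T$ really is the linear subspace $V_{d,m}(T)$ whose dimension is governed by Proposition \ref{Vdmdim}, and that membership in $V_{d,m}$ together with nonvanishing at every $P_j$ characterizes weight-$m$ codewords. This is immediate from the definitions — a codeword in $V_{d,m}$ is by construction supported inside $\{P_1,\ldots,P_m\}$, so its weight is $m$ iff none of its $m$ coordinates is zero — so the main "obstacle" is really just bookkeeping. I would also note at the outset that the hypothesis $m\le q+1$ is what makes Proposition \ref{Vdmdim} applicable to every subset appearing in the sum (any subset of $\le q+1$ collinear points still lies on the line through $P_1,\ldots,P_m$), and $m\ge d+2$ guarantees the sum is nonempty. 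With these remarks in place the proof is the two displayed computations above.
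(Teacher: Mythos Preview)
Your proof is correct and takes essentially the same approach as the paper: inclusion--exclusion over which of the $m$ coordinates are allowed to vanish, using Proposition~\ref{Vdmdim} to identify the dimension of each intersection. You actually carry out the simplification to the closed form explicitly (splitting at $i=m-d-1$ and invoking $\sum_{i=0}^m(-1)^i\binom{m}{i}=0$), whereas the paper simply writes $f_d(m)=q^{m-d-1}-\bigl|\bigcup A_i\bigr|$, expands via inclusion--exclusion, notes $\bigcap_{j\in S}A_j\cong V_{d,m-|S|}$, and declares that Proposition~\ref{Vdmdim} finishes the job.
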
 

\begin{proof}
The proof follows from an inclusion-exclusion argument.  Let $P_1,\ldots, P_m$ be the collinear points defining $V_{d,m}$.  Let $A_i$ be the set of codewords for which the coordinate corresponding to $P_i$ is zero.  By Proposition \ref{Vdmdim},
\[
f_d(m)=q^{m-d-1}-\left|\bigcup_{i=1}^{m} A_i\right|.
\] 
We have
\[
\left|\bigcup_{i=1}^{m} A_i\right|=\sum_{\substack{S\subseteq \{1,2,\ldots m\} \\ S\neq \emptyset }} (-1)^{|S|-1} \left|\bigcap_{j\in S} A_j\right|.
\]
There is a natural identification
\[
 \bigcap_{j\in S} A_j \cong V_{d, m-|S|},
\]
so applying Proposition \ref{Vdmdim} completes the proof.

\end{proof}

To prove the results stated at the beginning of Section \ref{sec:dual_coef}, we need to count pairs of codewords $c_1, c_2$ such that $\wt(c_1, c_2)$ is fixed.
\begin{prop}\label{gn_prop}
Let $d+2\le m \le q+1$.  The number of (ordered) pairs $(c_1,c_2)$ with $c_1, c_2 \in V_{d,m}$ both nonzero with $\wt(c_1,c_2)=m$ is
\[
g_d(m)=\sum_{\substack{d+2\leq a,b\leq m\\ a+b\geq m}} \dbinom{m}{a}\dbinom{a}{b+a-m} f_d(a)f_d(b).
\]
\end{prop}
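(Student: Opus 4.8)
The plan is to prove the formula by partitioning the ordered pairs $(c_1,c_2)$ according to their supports and reducing to Propositions \ref{Vdmdim} and \ref{fdmprop}. The one preliminary fact to record is that, by the remark following Lemma \ref{EGHlemma}, every nonzero codeword of $C_{2,d}^\perp$ has weight at least $d+2$; moreover, for a subset $T\subseteq\{P_1,\ldots,P_m\}$ with $|T|=a$, a codeword of $C_{2,d}^\perp$ has support contained in $T$ if and only if it lies in the space $V_{d,a}$ attached to the $a$ collinear points of $T$ (this uses $a\le m\le q+1$), and among such codewords the number with support \emph{equal} to $T$ is $f_d(a)$ by Proposition \ref{fdmprop}.

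Now fix an ordered pair $(c_1,c_2)\in V_{d,m}^2$ with $c_1,c_2$ both nonzero, and set $T_i=\supp(c_i)$, $a=|T_1|$, $b=|T_2|$. Since $c_1,c_2$ are nonzero we have $a,b\ge d+2$, and since $\wt(c_1,c_2)=|T_1\cup T_2|$, the condition $\wt(c_1,c_2)=m$ says exactly that $T_1\cup T_2=\{P_1,\ldots,P_m\}$; the $m-a$ points outside $T_1$ must then lie in $T_2$, so $a+b\ge m$ (and trivially $a,b\le m$). Conversely, for $a,b$ in this range I count the pairs with $|\supp(c_1)|=a$, $|\supp(c_2)|=b$ and $\supp(c_1)\cup\supp(c_2)=\{P_1,\ldots,P_m\}$: choose $T_1$ in $\binom{m}{a}$ ways; then $T_2$ is determined by choosing, in addition to the $m-a$ points outside $T_1$, a further $a+b-m$ points from inside $T_1$, i.e.\ $\binom{a}{a+b-m}$ ways; and finally $c_1$ ranges over the $f_d(a)$ codewords with support exactly $T_1$ and, independently, $c_2$ over the $f_d(b)$ codewords with support exactly $T_2$. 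Multiplying these and summing over $d+2\le a,b\le m$ with $a+b\ge m$ gives
\[
g_d(m)=\sum_{\substack{d+2\le a,b\le m\\ a+b\ge m}}\binom{m}{a}\binom{a}{a+b-m}f_d(a)f_d(b),
\]
as claimed.

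I do not expect a serious obstacle here: all the substance is already contained in Propositions \ref{Vdmdim} and \ref{fdmprop}, and the remaining argument is pure bookkeeping. The only point needing care is the identification ``codewords of $C_{2,d}^\perp$ vanishing off an $a$-subset $T$ of the $m$ collinear points $=V_{d,a}$'', which is what guarantees that the count over support sets neither double-counts nor omits pairs; it is immediate from the definition of $V_{d,\bullet}$ together with the weight bound $\ge d+2$. As a consistency check one may observe that the summand $\binom{m}{a}\binom{a}{a+b-m}$ is in fact symmetric in $a$ and $b$ (both it and $\binom{m}{b}\binom{b}{a+b-m}$ equal the multinomial coefficient counting the ways to split $\{P_1,\ldots,P_m\}$ into the common support, the part lying only in $T_1$, and the part lying only in $T_2$), which matches the evident symmetry of $g_d(m)$ under interchanging $c_1$ and $c_2$.
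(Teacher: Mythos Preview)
Your proof is correct and follows essentially the same approach as the paper's: partition ordered pairs by the supports $T_1=\supp(c_1)$ and $T_2=\supp(c_2)$, use the minimum-weight bound to get $a,b\ge d+2$, the union condition to get $a+b\ge m$, count the admissible $(T_1,T_2)$ by $\binom{m}{a}\binom{a}{a+b-m}$, and then invoke Proposition~\ref{fdmprop} for the number of codewords with each prescribed support. Your additional symmetry remark is a nice sanity check not present in the paper but changes nothing substantively.
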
 
\noindent This explains the values of $g_3(m)$ given before the statement of Lemma \ref{lemmalowweightcubics}.

\begin{proof} 
Let $P_1,\ldots, P_m$ be the collection of collinear points defining $V_{d,m}$.  Suppose $c_1,c_2 \in V_{d,m}$ satisfy $\wt(c_1,c_2) = m$.  Let $A \subseteq \{P_1,\ldots, P_m\}$ be the support of $c_1$, and $B \subseteq \{P_1,\ldots, P_m\}$ be the support of $c_2$.  By assumption $A\cup B = \{P_1,\ldots, P_m\}$.

Suppose $|A| = a$ and $|B| = b$, so $d+2 \le a,b \le m$ and $a+b \ge m$.  There are $\binom{m}{a}$ choices for $A$.  We see that $B$ must contain the $m-a$ points of $\{P_1,\ldots, P_m\} \setminus A$, and also $b-(m-a)$ points of $A$.  Applying Proposition \ref{fdmprop} completes the proof.

\end{proof}

We now turn from counting codewords supported on a set of collinear points, to counting codewords supported on a set of noncollinear points.

\begin{lemm}\label{VSconiclemma}
Let $S$ be a set of $6$ noncollinear points that fail to impose independent conditions on conics. Let $V_S$ be the vector space of codewords in $C_{2,2}^{\perp}$ supported on $S$.
\begin{itemize}[wide, labelwidth=!, labelindent=0pt] 
\item If $S$ consists of $6$ points on a smooth conic, then $V_S$ is $1$-dimensional and every nonzero element of $V_S$ has weight $6$.

\item If $S$ is a collection of $6$ points on two $\F_q$-rational lines, with $3$ on each line and not including the intersection point, then $V_S$ is $1$-dimensional and every nonzero element of $V_S$ has weight $6$.

\end{itemize}
\end{lemm}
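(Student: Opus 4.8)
The plan is to handle both cases by the same strategy: first produce one explicit nonzero codeword supported on $S$, then bound $\dim V_S$ from above by $1$ and show that no nonzero element of $V_S$ can have weight $<6$. For the existence of a nonzero codeword in each case, I would invoke Lemma \ref{EGHlemma}: in the smooth conic case, $6$ points on a conic is exactly the second exceptional configuration ($n = 2d+2 = 6$ points on a conic, with $d=2$), so these points fail to impose independent conditions on conics and hence support a nonzero codeword of $C_{2,2}^\perp$; in the two-lines case, the $6$ points also lie on a (reducible) conic, so the same lemma applies. The subtler point is to check that the weight of such a codeword is exactly $6$, i.e.\ that its support is all of $S$ rather than a proper subset. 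For that I would argue: if some coordinate, say the one at $P_1$, vanished, then we would have a nonzero codeword supported on a $5$-element subset of $S$, and by Lemma \ref{supportconiclemma} (the $k=5$ case) that subset would have to be collinear — but in the smooth conic case no $5$ points of $S$ are collinear (a line meets a smooth conic in $\le 2$ points), and in the two-lines case at most $3$ of the $6$ points lie on any line, again a contradiction. Hence every nonzero element of $V_S$ has full support, weight $6$.

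Next, to see $\dim V_S \le 1$: suppose $c_1, c_2 \in V_S$ are linearly independent. Since both are nonzero, each has weight $6$ by the previous paragraph, so in particular the coordinate at $P_1$ is nonzero in each. Choose $\alpha \in \F_q^*$ so that $c_1 - \alpha c_2$ has a zero in the $P_1$-coordinate. Then $c_1 - \alpha c_2$ is a codeword supported on $S \setminus \{P_1\}$, a set of $5$ points; but we just showed any nonzero element of $V_S$ has weight $6$, and more directly: a nonzero codeword of $C_{2,2}^\perp$ supported on only $5$ points would, by Lemma \ref{supportconiclemma}, have to be supported on $5$ collinear points, which $S \setminus \{P_1\}$ is not in either configuration. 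Hence $c_1 - \alpha c_2 = 0$, contradicting linear independence. Therefore $V_S$ is exactly $1$-dimensional, and by the weight computation every nonzero element has weight $6$.

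The main obstacle here is genuinely just the geometric bookkeeping about how many points of $S$ can be collinear in each configuration — that a line meets a smooth conic in at most two $\F_q$-points, and that in the ``$3+3$ on two lines, omitting the intersection point'' configuration no line through $S$ contains more than $3$ of the points (the worst case is one of the two defining lines). Once those elementary facts are in hand, everything else is a clean reduction to the already-established cases $k=4,5$ of Lemma \ref{supportconiclemma}. I would also remark in passing that these computations confirm the coefficient contributions needed in the proof of Lemma \ref{lemmalowweightconics}: each smooth conic and each such pair of lines contributes a $(q-1)$-family (the nonzero scalar multiples) of weight-$6$ codewords.
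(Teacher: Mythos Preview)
Your proof is correct and follows essentially the same approach as the paper. The paper packages the argument slightly more compactly by fixing a point $P\in S$, considering the coordinate-projection map $\phi_P\colon V_S\to\F_q$, and showing it is an isomorphism (injectivity from Lemma~\ref{supportconiclemma} and the non-collinearity of any $5$-subset of $S$; surjectivity from Lemma~\ref{EGHlemma}); your two steps---first ``every nonzero element has weight $6$'', then ``$\dim V_S\le 1$''---are exactly the unpacking of this isomorphism, using the same geometric facts and the same reductions.
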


\begin{proof} 
We prove the two statements together.  For each such set $S$, pick one of the points $P \in S$ and consider the natural map $\phi_P \colon V_S \rightarrow \F_q$ that takes a codeword to its value at the coordinate corresponding to $P$.

Let $c \in V_S$ be such that $\phi_P(c) = 0$.  Then $\wt(c) \le 5$, and Lemma \ref{supportconiclemma} implies that $c$ is the all zero codeword.  Therefore, $\phi_P$ is injective.  Lemma \ref{EGHlemma} implies that $S$ fails to impose independent conditions on conics, so there is a nonzero element of $V_S$.  This implies that $\phi_P$ is surjective.

\end{proof}

We now explain the form of the coefficients of $W_{C_{2,2}^\perp}(X,Y)$ given in Proposition~\ref{WC22prop}.
\begin{itemize}[wide, labelwidth=!, labelindent=0pt] 
\item Lemma \ref{supportconiclemma} and Proposition \ref{fdmprop} imply that every $c \in C_{2,2}^\perp$ with $\wt(c) = 4$ is supported on $4$ collinear points. Given any $4$ collinear points there are exactly $q-1$ weight $4$ codewords supported on them.

\item Similarly, every $c \in C_{2,2}^\perp$ with $\wt(c) = 5$ is supported on $5$ collinear points. Given any $5$ collinear points there are exactly $q^2-1 - \binom{5}{4}(q-1)$ weight $5$ codewords supported on them.

\item Lemma \ref{supportconiclemma} implies that every $c \in C_{2,2}^\perp$ with $\wt(c) = 6$ is supported either on a set of $6$ collinear points, on a set if $6$ points on a smooth conic, or on a set contained in two $\F_q$-rational lines, with $3$ on each line and not including the intersection point.  Applying Proposition \ref{fdmprop} and Lemma \ref{VSconiclemma} along with some elementary counting implies that the number of $c \in C_{2,2}^\perp$ with $\wt(c) = 6$ is
\[
(q^2+q+1) \binom{q+1}{6} f_2(6) + (q^5-q^2) \binom{q+1}{6}(q-1) + \binom{q^2+q+1}{2} \binom{q}{3}^2 (q-1).
\]
\end{itemize}

We now prove the analogue of Lemma \ref{VSconiclemma} for sets of noncollinear points that fail to impose independent conditions on cubics.  We consider codewords of weight $8$ and weight $9$ separately.

\begin{lemm}\label{VScubic8lemma}
Let $S$ be a set of $8$ noncollinear points that fail to impose independent conditions on cubics. Let $V_S$ be the vector space of codewords in $C_{2,3}^\perp$ supported on $S$.
\begin{itemize}[wide, labelwidth=!, labelindent=0pt] 
\item If $S$ consists of $8$ points on a smooth conic, then $V_S$ is $1$-dimensional and every nonzero element of $V_S$ has weight $8$.

\item If $S$ is a collection of $8$ points on two $\F_q$-rational lines, with $4$ on each line and not including the intersection point, then $V_S$ is $1$-dimensional and every nonzero element of $V_S$ has weight $8$.
\end{itemize}
\end{lemm}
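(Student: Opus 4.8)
The proof of Lemma~\ref{VScubic8lemma} should follow exactly the template of the proof of Lemma~\ref{VSconiclemma}, replacing conics with cubics and the weight bounds from Lemma~\ref{supportconiclemma} with those from Lemma~\ref{supportcubiclemma}. The key observation is that in both cases $S$ is a set of $8$ points failing to impose independent conditions on cubics but for which no proper subset has a ``smaller'' failure, so the space $V_S$ is exactly $1$-dimensional and contains no codeword of weight strictly less than $8$.

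First I would fix one of the eight points $P \in S$ and consider the evaluation-type map $\phi_P \colon V_S \rightarrow \F_q$ sending a codeword $c$ to its coordinate at $P$. Since $S$ fails to impose independent conditions on cubics (by Lemma~\ref{EGHlemma}: eight points on a smooth conic are $2d+2 = 8$ points on a conic, and eight points on two lines with four on each are also eight points on a conic), there is a nonzero element of $V_S$, so $\phi_P$ is a well-defined linear map from a nonzero space. Next, suppose $c \in V_S$ with $\phi_P(c) = 0$. Then $c$ is supported on the remaining $7$ points of $S$, so $\wt(c) \le 7$. But in both configurations, no $5$ of the seven remaining points are collinear: in the smooth conic case a line meets a smooth conic in at most $2$ points, and in the two-lines case each line contains only $4$ points of $S$, so at most $4$ of any seven lie on a line. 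Hence by Lemma~\ref{supportcubiclemma}, which says every nonzero codeword of $C_{2,3}^\perp$ of weight $\le 7$ is supported on collinear points, $c$ must be the zero codeword. Therefore $\phi_P$ is injective, and being a nonzero map into $\F_q$ it is an isomorphism, so $\dim V_S = 1$.

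Finally, to see that the unique nonzero codeword (up to scalar) has weight exactly $8$: if it had weight $\le 7$ it would be supported on collinear points by Lemma~\ref{supportcubiclemma}, contradicting that $S$ is noncollinear with no $5$-point subline; and its weight is at most $8$ since it is supported on $S$. A small subtlety worth recording: the codeword cannot have weight $6$ or $7$ either, for the same reason, and it cannot miss just one point of $S$ and have weight $7$ without those seven points being collinear — which they are not. So every nonzero element of $V_S$ has weight exactly $8$.

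**Main obstacle.** The only real content beyond bookkeeping is verifying, in each of the two configurations, that no $5$ points among any $7$-point (or $8$-point) subset are collinear, so that the weight bounds of Lemma~\ref{supportcubiclemma} actually force the relevant codewords to vanish or to have full weight; this is immediate from the geometry (lines meet smooth conics in $\le 2$ points; the two given lines carry only $4$ points of $S$ apiece) but must be stated carefully so that the induction-style argument inherited from Lemma~\ref{supportcubiclemma} applies. Everything else is a direct transcription of the conic argument.
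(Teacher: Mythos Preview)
Your proposal is correct and follows essentially the same approach as the paper, which explicitly omits the proof with the remark that it is ``very similar to the proof of Lemma~\ref{VSconiclemma}.'' Your write-up is in fact more detailed than what the paper would have given: you spell out the geometric check that no $5$ of the remaining $7$ points are collinear in each configuration, which is exactly the ingredient needed to invoke Lemma~\ref{supportcubiclemma} and conclude that $\ker\phi_P = 0$.
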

\noindent The proof of this lemma is very similar to the proof of Lemma \ref{VSconiclemma}, so we omit it.

We use Lemma \ref{VScubic8lemma} to explain the form of the $B_8$ coefficient given in Theorem \ref{C23LowWeightDual}.  If $c \in C_{2,3}^\perp$ has $\wt(c) = 8$ then $S = \supp(c)$ is either a set of $8$ collinear points, or one of the two configurations given in the statement of Lemma \ref{VScubic8lemma}.  Proposition \ref{fdmprop} together with this lemma imply that the number of these weight $8$ codewords is 
\[
(q^2+q+1)\binom{q+1}{8} f_3(8) + (q^5-q^2)\binom{q+1}{8}(q-1) + \binom{q^2+q+1}{2} \binom{q}{4}^2 (q-1).
\]

We next give the analogous statement for codewords of $C_{2,3}^\perp$ of weight $9$.
\begin{lemm}\label{VScubic9lemma}
Let $S$ be a set of $9$ noncollinear points that fail to impose independent conditions on cubics. Let $V_S$ be the vector space of codewords in $C_{2,3}^{\perp}$ supported on $S$.
\begin{itemize}[wide, labelwidth=!, labelindent=0pt] 
\item If $S$ consists of $9$ points on a smooth conic, then $V_S$ is $2$-dimensional and every nonzero element of $V_S$ has weight $8$ or $9$.

\item If $S$ is a collection on $9$ points on two lines with at least $4$ points on each, then $V_S$ is $2$-dimensional and every nonzero element of $V_S$ has weight $5, 8,$ or $9$.

\item If $S$ is a collection of $9$ points that are exactly the intersection points of two cubics, i.e., the two cubics share no common component, then $V_S$ is $1$-dimensional  and every nonzero element of $V_S$ weight $9$.
\end{itemize}

\end{lemm}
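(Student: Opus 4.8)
The plan is, in each of the three cases, to compute the dimension of the space $\Gamma_{S,3}$ of cubic forms vanishing on $S$, to convert this into $\dim V_S$ by rank--nullity, and then to read off the possible weights of the nonzero elements of $V_S$ from Lemma \ref{supportcubiclemma}, Proposition \ref{fdmprop}, and Lemma \ref{VScubic8lemma}. The bookkeeping is the following: since $\dim \F_q[x,y,z]_3 = 10$ and the kernel of the evaluation map $\F_q[x,y,z]_3 \to \F_q^{S}$ is exactly $\Gamma_{S,3}$, the image of that map has dimension $10-\dim\Gamma_{S,3}$; as $V_S$ is the orthogonal complement of this image in $\F_q^{S}\cong\F_q^{9}$, we get $\dim V_S = \dim\Gamma_{S,3}-1$. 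Thus the first two cases reduce to showing $\dim\Gamma_{S,3}=3$ and the third to showing $\dim\Gamma_{S,3}=2$.

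For the smooth conic case, note that no three points of a smooth conic $Q$ are collinear, so any five of the nine points of $S$ determine $Q$, whence $Q$ is defined over $\F_q$ with irreducible defining form $q_Q$. Restricting a cubic form $g$ to $Q\cong\Proj^1$ gives a global section of $\mathcal{O}_{\Proj^1}(6)$, which has at most six zeros unless it is identically zero; since $g$ vanishes at the nine points of $S\subset Q$ it vanishes on all of $Q$, so $q_Q\mid g$ and $g=q_Q\ell$ for a linear form $\ell$. Conversely every such product vanishes on $Q\supseteq S$, so $\Gamma_{S,3}=q_Q\cdot\F_q[x,y,z]_1$ is $3$-dimensional and $\dim V_S=2$. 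For the weights, Lemma \ref{supportcubiclemma} forces a nonzero codeword of weight at most $7$ to have collinear support, impossible on a smooth conic, while Lemma \ref{VScubic8lemma} produces weight-$8$ codewords supported on each of the nine $8$-point subsets of $S$; the remaining nonzero elements of $V_S$ have full support, hence weight $9$. The two-lines case is handled the same way: writing $L_1\neq L_2$ for the two lines with defining forms $\ell_1,\ell_2$, each line carries at least four of the nine points, and a line meeting a cubic in four or more points is a component of it, so any $g\in\Gamma_{S,3}$ is divisible by $\ell_1$ and by $\ell_2$, hence by $\ell_1\ell_2$, and equals $\ell_1\ell_2\ell$; again $\dim\Gamma_{S,3}=3$ and $\dim V_S=2$. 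In both admissible sub-configurations of Lemma \ref{supportcubiclemma} (five points on one line and four on the other with the intersection point excluded, or the intersection point together with four more points on each line) $S$ contains five collinear points, so Proposition \ref{fdmprop} gives weight-$5$ codewords in $V_S$; Lemma \ref{supportcubiclemma} excludes weights $6$ and $7$ (there are no six collinear points in $S$, and no smooth conic meets the configuration in more than four points), Lemma \ref{VScubic8lemma} produces weight-$8$ codewords supported on a $4+4$ sub-configuration, and the remaining elements of $V_S$ have weight $9$.

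The third case is the heart of the matter, and I expect it to be the main obstacle. Now $S=C_1\cap C_2$ is a transverse intersection of two cubics with no common component, and we must show $\dim\Gamma_{S,3}=2$; the inequality $\dim\Gamma_{S,3}\geq 2$ is immediate since $\Gamma_{S,3}$ contains the pencil $\langle f_1,f_2\rangle$. The first step is to observe that no five of the nine points are collinear and that no eight of them lie on a conic: a line through five of the points, or an irreducible conic through eight of them, or a component line through at least four of them when the conic degenerates to a pair of lines, would by B\'ezout be forced to be a common component of $C_1$ and $C_2$, contradicting the hypothesis. By Lemma \ref{EGHlemma} this means every $8$-point subset $S'\subset S$ imposes independent conditions on cubics, so $\dim\Gamma_{S',3}=2$; and the Cayley--Bacharach (Chasles) theorem of Remark \ref{Chaslesremark}, in the form of Proposition \ref{Hart4.5} --- equivalently, the fact that the complete intersection ideal $(f_1,f_2)$ is saturated and therefore coincides with the homogeneous ideal of the reduced scheme $S$ in degree $3$ --- gives $\Gamma_{S,3}=\Gamma_{S',3}$. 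Hence $\dim\Gamma_{S,3}=2$ and $\dim V_S=1$, and the same two observations exclude, via Lemma \ref{supportcubiclemma}, every codeword of weight at most $8$ supported on a subset of $S$, so every nonzero element of the line $V_S$ has weight $9$. The only non-routine points are this B\'ezout argument ruling out common components and the invocation of Cayley--Bacharach; everything else is the identity $\dim V_S=\dim\Gamma_{S,3}-1$ together with the weight classification already established in Lemma \ref{supportcubiclemma}.
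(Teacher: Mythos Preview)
Your proof is correct and takes a different route from the paper's. The paper argues uniformly in all three cases by choosing an $8$-point subset $S'\subset S$, using Lemma \ref{VScubic8lemma} (or, in the third case, independence of conditions on $S'$) to pin down $\dim V_{S'}$, and then applying the projection $\phi_P\colon V_S\to\F_q$ onto the coordinate of the remaining point $P=S\setminus S'$; since $\ker\phi_P=V_{S'}$ and $\phi_P$ is surjective, this gives $\dim V_S=\dim V_{S'}+1$. You instead compute $\dim\Gamma_{S,3}$ directly---via B\'ezout the conic (respectively, each of the two lines) must divide any cubic vanishing on $S$, so $\Gamma_{S,3}$ is a shift of $\F_q[x,y,z]_1$---and then translate with the rank--nullity identity $\dim V_S=\dim\Gamma_{S,3}-1$. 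Your approach makes the algebraic-geometric content explicit and is self-contained; the paper's stays inside the coding-theoretic framework and inductively reuses the weight-$8$ lemma. One small remark on your third case: once you have established $\langle f_1,f_2\rangle\subseteq\Gamma_{S,3}\subseteq\Gamma_{S',3}$ with $\dim\Gamma_{S',3}=2$, the equality $\dim\Gamma_{S,3}=2$ follows immediately, so the separate appeal to Cayley--Bacharach is not strictly required; and if you do cite Proposition \ref{Hart4.5}, note that its hypotheses are ``no $4$ collinear, no $7$ on a conic,'' marginally stronger than the ``no $5$ collinear, no $8$ on a conic'' you state---the same B\'ezout argument (a line through $4$ of the points, or a conic through $7$, would already be a common component of $C_1$ and $C_2$) supplies these as well.
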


\begin{proof} 
In each of the cases described above, we choose a subset $S' \subset S$ of size $8$.
\begin{itemize}[wide, labelwidth=!, labelindent=0pt] 
\item If $S$ is a collection of $9$ points on a smooth conic, then $S'$ is $8$ points on a smooth~conic.
\item If $S$ is a collection on $9$ points on two lines with at least $4$ points on each, then choose $S'$ to be a subset with $4$ points on each line, not including the intersection point.
\item In the final case in the statement of the lemma, choose $S' \subset S$ to be any subset of $8$ points. By Lemma \ref{supportcubiclemma}, $S'$ imposes independent conditions on cubics.
\end{itemize}
\noindent In each case, let $P$ denote the point of $S \setminus S'$.

Let $V_{S'}$ denote the vector space of codewords of $C_{2,3}^\perp$ supported on $S'$.  Clearly $V_{S'} \subseteq V_S$.  Consider the map $\phi_P \colon V_S \rightarrow \F_q$ that takes a codeword to the value of the coordinate corresponding to $P$.  It is clear that $\ker(\phi_P) = V_{S'}$.  Lemma \ref{VScubic8lemma} implies that in the first two cases, $\dim(V_{S'}) = 1$, and in the final case $V_{S'}$ consists of only the zero codeword, so $\phi_P$ is an isomorphism.

\end{proof}

We use Lemma \ref{VScubic9lemma} to explain the form of the $B_9$ coefficient given in Theorem \ref{C23LowWeightDual}.  If $c \in C_{2,3}^\perp$ has $\wt(c) = 9$ then $S = \supp(c)$ is either a set of $9$ collinear points, or one of the three configurations given in the statement of Lemma \ref{VScubic8lemma}.  
\begin{itemize}[wide, labelwidth=!, labelindent=0pt] 
\item Suppose $S$ consists of $9$ points on a smooth conic.  Lemma \ref{VScubic8lemma} implies that $V_S$ is $2$-dimensional.  We claim that there are $9(q-1)$ nonzero codewords of weight less than $9$ in $V_S$.  For any collection of $8$ points of $S$, there are $q-1$ codewords of weight $8$ supported on them.  Lemma \ref{supportcubiclemma} implies that these are the only nonzero codewords of weight less than $9$ in $V_S$.   

\item Suppose $S$ consists of $9$ points on two lines, with $5$ points on one and $4$ points on the other, not including the intersection point.  Lemma \ref{VScubic8lemma} implies that $V_S$ is $2$-dimensional.  We claim that there are $6(q-1)$ nonzero codewords of weight less than $9$ in $V_S$.

Let $S'$ be a subset of size $8$ that we get from removing one point from the line containing $5$ points of $S$. There are $q-1$ elements of $V_S$ of weight $8$ supported on $S'$.  There are also $q-1$ codewords of weight $5$ in $V_S$ supported on the $5$ collinear points of $S$.  Lemma \ref{supportcubiclemma} implies that these are the only nonzero codewords of weight less than $9$ in $V_S$.

\item Suppose $S$ consists of $9$ points on two lines, the intersection point together with $4$ additional points on each.   Lemma \ref{VScubic8lemma} implies that $V_S$ is $2$-dimensional.  We claim that there are $3(q-1)$ nonzero codewords of weight less than $9$ in $V_S$.

For each of the two subsets of $5$ collinear points in $S$ there are $q-1$ codewords of weight $5$ supported on these points.  There are also $q-1$ codewords of weight $8$ supported on the subset of $8$ points of $S$ not including the intersection point.  Lemma \ref{supportcubiclemma} implies that these are the only nonzero codewords of weight less than $9$ in $V_S$.
\end{itemize}

Proposition \ref{fdmprop} together with the observations above imply that the number of $c \in C_{2,3}^\perp$ with $\wt(c) = 9$ is
\begin{eqnarray*}
& &  (q^2+q+1)\dbinom{q+1}{9} f_3(9)+ (q^5-q^2)\binom{q+1}{9}\left(q^2-1-9(q-1)\right) \\
& + &  2\binom{q^2+q+1}{2}\binom{q}{5}\binom{q}{4}(q^2-1 - 6(q-1))\\
& + & \binom{q^2+q+1}{2}\binom{q}{4}^2(q^2-1-3(q-1)) + (q-1) I_9(q),
\end{eqnarray*}
where $I_9(q)$ is the number of collections of $9$ points in $\Proj^2(\F_q)$ that are the intersection points of two cubics that do not share a common component.  Comparing this expression to the one given in Theorem \ref{C23LowWeightDual} proves the following.

\begin{cor}\label{I9cor}
We have that $I_9(q)$ equals
\begin{equation*}
\frac{1}{9!}(q^6 + 2q^5 - 73q^4 + 344q^3 - 838q^2 + 1754q - 2030)(q^3-1)(q + 1)(q - 1)(q - 2)q^4.
\end{equation*}
\end{cor}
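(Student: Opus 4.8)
The plan is to pin down $I_9(q)$ by equating two expressions for $B_9$, the number of weight-$9$ codewords of $C_{2,3}^\perp$. The first expression is the five-term sum assembled immediately above the statement: Lemma \ref{supportcubiclemma} shows that the support of a weight-$9$ codeword of $C_{2,3}^\perp$ is either $9$ collinear points, $9$ points on a smooth conic, $9$ points lying on two lines in one of the two configurations listed there, or the complete intersection of two cubics with no common component; counting codewords over each support type — via $f_3(9)$ from Proposition \ref{fdmprop} for the collinear case, via Lemma \ref{VScubic9lemma} for the dimension of $V_S$ and the count of its codewords of weight $<9$ in the conic and two-line cases, and via the fact that $V_S$ is $1$-dimensional with all nonzero codewords of weight $9$ in the complete-intersection case — yields $B_9$ as a sum whose last term is $(q-1)I_9(q)$, with $I_9(q)$ the quantity named in the statement. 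The second expression is the explicit polynomial for $B_9$ in Theorem \ref{C23LowWeightDual}, obtained by applying the MacWilliams identity (Theorem \ref{MacWilliamsThm}) to Elkies's formula for $W_{C_{2,3}}(X,Y)$.

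First I would expand $f_3(9)$ using Proposition \ref{fdmprop} with $d=3$ and $m=9$:
\[
f_3(9) = (q^5-1) - 9(q^4-1) + 36(q^3-1) - 84(q^2-1) + 126(q-1) = q^5 - 9q^4 + 36q^3 - 84q^2 + 126q - 70,
\]
which is precisely the polynomial multiplying $(q^2+q+1)\binom{q+1}{9}$ in the $B_9$ formula of Theorem \ref{C23LowWeightDual}. The remaining coefficients match after the trivial regroupings $(q^2-1)-6(q-1) = q^2-6q+5$ and $(q^2-1)-3(q-1) = q^2-3q+2$ (and $(q^2-1)-9(q-1)$ already appears in that form), while the combinatorial prefactors $(q^5-q^2)\binom{q+1}{9}$, $\binom{q^2+q+1}{2}\binom{q}{5}\binom{q}{4}$, and $\binom{q^2+q+1}{2}\binom{q}{4}^2$ occur verbatim on both sides. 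Hence the four terms of $B_9$ other than $(q-1)I_9(q)$ coincide identically in the two expressions.

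Subtracting those four terms leaves $(q-1)I_9(q)$ equal to $(q-1)$ times the polynomial written $I_9(q)$ in Theorem \ref{C23LowWeightDual}; since $q>2$ the factor $q-1$ is nonzero and may be cancelled, and rewriting $(q-1)(q^2+q+1) = q^3-1$ in that polynomial gives the form claimed in the corollary. I do not expect a real obstacle here: the only things needing care are the arithmetic of the term-by-term comparison (chiefly the expansion of $f_3(9)$) and the fact — established already in Lemmas \ref{supportcubiclemma} and \ref{VScubic9lemma} — that the five support types are exhaustive and mutually exclusive with the stated codeword counts, so that the five-term formula for $B_9$ is neither incomplete nor over-counting.
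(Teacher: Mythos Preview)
Your proposal is correct and follows exactly the paper's own argument: derive an expression for $B_9$ by summing over the five support types classified in Lemma \ref{supportcubiclemma} (using Proposition \ref{fdmprop} and Lemma \ref{VScubic9lemma} for the codeword counts), then compare term-by-term with the formula for $B_9$ in Theorem \ref{C23LowWeightDual} to isolate $I_9(q)$. Your explicit expansion of $f_3(9)$ and the regroupings $(q^2-1)-6(q-1)$, $(q^2-1)-3(q-1)$ make the matching transparent, and the final rewriting $(q-1)(q^2+q+1)=q^3-1$ is exactly what is needed.
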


\begin{remark}
The computation of the $X^{q^2+q+1-9}Y^9$ coefficient of $W_{C_{2,3}^\perp}(X,Y)$ given in \cite[Theorem 3]{KaplanAGCT} has the additional restriction that the characteristic of $\F_q$ is not $2$ or $3$.  In Appendix \ref{AppendixA}, we compute $I_9(q)$ in a different way that is independent of the characteristic of $\F_q$, which shows that Corollary \ref{I9cor} holds in these additional cases.
\end{remark}

\subsection{Proofs of Lemmas \ref{lemmalowweightconics}, \ref{lemmalowweightconiccubic}, and \ref{lemmalowweightcubics}}

\begin{proof}[Proof of Lemma \ref{lemmalowweightconics}] 
Lemma \ref{supportconiclemma} implies that $C_{2,2}^\perp$ has no nonzero codewords of weight less than $4$ and that every codeword of weight $4$ is supported on a set of $4$ collinear points.  Proposition \ref{fdmprop} implies that there are $q-1$ weight $4$ codewords supported on any chosen set of $4$ collinear points.  

If $c_1,c_2 \in C_{2,2}^\perp$ and $\wt(c_1, c_2) = 4$, then at least one of $c_1, c_2$ must have weight $4$, and the other codeword is supported on the same set of $4$ collinear points.  There are $(q^2+q+1) \binom{q+1}{4}$ ways to choose the support of $(c_1, c_2)$ and $q^2-1$ choices of an ordered pair of codewords supported on these points where at least one is nonzero.

\end{proof}

\begin{proof}[Proof of Lemma \ref{lemmalowweightconiccubic}] 
Lemma \ref{supportconiclemma} implies that $C_{2,2}^\perp$ has no nonzero codewords of weight less than $4$. Lemma \ref{supportcubiclemma} implies that $C_{2,3}^\perp$ has no nonzero codewords of weight less than $5$.  Therefore, it is clear that $B_0^{[2]} = 1$ and $B_1^{[2]} = B_2^{[2]}  = B_3^{[2]} = 0$.  For each $i \in [4,6]$, the $B^{2,2}_i$ term given in the formula for $B_i^{[2]}$ accounts for pairs $(c_2, 0)$ where $c_2 \in C_{2,2}^\perp$ with $\wt(c_2) = i$.  For each $i \in [5,6]$, the $B^{2,3}_i$ term given in the formula for $B_i^{[2]}$ accounts for pairs $(0, c_3)$ where $c_3 \in C_{2,3}^\perp$ with $\wt(c_3) = i$.  Therefore, we need only consider the contribution from pairs $(c_2, c_3)$ where $c_2 \in C_{2,2}^\perp$ and $c_3 \in C_{2,3}^\perp$ are both nonzero.  In this case, if $\wt(c_2, c_3) \le 6$, then $\wt(c_3)$ is equal to $5$ or $6$. Lemma \ref{supportcubiclemma} implies that $c_3$ is supported on a set of $5$ or $6$ collinear points, so $\supp(c_2,c_3)$ must be a set of $5$ or $6$ collinear points.

Suppose that $\wt(c_2,c_3) = 5$. This implies that $\supp(c_2,c_3)$ is a set of $5$ collinear points.  Let $S$ be a set of $5$ collinear points.  Proposition \ref{fdmprop} implies that there are $q^2-1$ nonzero $c_2 \in C_{2,2}^\perp$ supported on $S$ and $q-1$ nonzero $c_3 \in C_{2,3}^\perp$ supported on $S$.  Every such pair has $\wt(c_2,c_3) = 5$.  Putting this together completes the calculation of $B_5^{[2]}$.

Now suppose that $\wt(c_2,c_3) = 6$.  This implies that $\supp(c_2,c_3)$ is a set of $6$ collinear points.  Let $S$ be a set of $6$ collinear points.  Proposition \ref{fdmprop} implies that there are $q^3-1$ nonzero $c_2 \in C_{2,2}^\perp$ supported on $S$ and $q^2-1$ nonzero $c_3 \in C_{2,3}^\perp$ supported on $S$.  We subtract the number of pairs where $\wt(c_2, c_3) = 5$.  We see that the contribution to $B_6^{[2]}$ from pairs where $c_2, c_3$ are both nonzero is
\[
(q^2+q+1)\binom{q+1}{6}\left((q^3-1)(q^2-1)- \binom{6}{5} (q^2-1)(q-1)\right).
\]
This completes the proof.

\end{proof}

\begin{proof}[Proof of Lemma \ref{lemmalowweightcubics}]
Lemma \ref{supportcubiclemma} implies that $C_{2,3}^\perp$ has no nonzero codewords of weight less than $5$ and that every nonzero codeword of weight at most $7$ is supported on a set of collinear points.  Therefore, it is clear that $B_0^{[2]} = 1$ and $B_1^{[2]} = B_2^{[2]}  = B_3^{[2]} = 0 = B_4^{[2]} = 0$.  For each $i \in [5,9]$, the $2B^{2,3}_i$ term given in the formula for $B_i^{[2]}$ accounts for pairs $(c, 0)$ and $(0,c)$ where $c \in C_{2,3}^\perp$ with $\wt(c) = i$.  Therefore, we need only consider the contribution from pairs $(c_1,c_2)$ with $c_1,c_2 \in C_{2,3}^\perp$ both nonzero and $\wt(c_1, c_2) \in [5,9]$.  Proposition \ref{gn_prop} implies that the number of such pairs where $\supp(c_1,c_2)$ is a set of $k$ collinear points is $g_3(k)$.

We need only count pairs with $\wt(c_1,c_2) = 8$ and $\wt(c_1,c_2) = 9$ where $\supp(c_1,c_2)$ is not a set of collinear points.  We first consider pairs with $\wt(c_1, c_2) = 8$.  We divide the count into cases based on $\supp(c_1, c_2)$.  By Lemma \ref{supportcubiclemma}, if $c\in C_{2,3}^\perp$ has weight at most $8$ then $\supp(c)$ is either a set of collinear points, $8$ points on a smooth conic, or $8$ points on two $\F_q$-rational lines, $4$ on each, and not including the intersection point.  

\begin{itemize}[wide, labelwidth=!, labelindent=0pt]  
\item If $\wt(c_1,c_2) = 8$ and the support of one of these codewords is a set of $8$ points on a smooth conic, then the other codeword is also supported on these same $8$ points.  Lemma \ref{VScubic8lemma} implies that given $8$ points on a smooth conic, there are $q-1$ nonzero $c \in C_{2,3}^\perp$ supported on them.  This case contributes $(q-1)^2 (q^5-q^2) \binom{q+1}{8}$ to $B_8^{[2]}$.

\item If $\wt(c_1,c_2) = 8$ and the support of one of these codewords is a set of $8$ points on of two $\F_q$-rational lines, $4$ on each, and not including the intersection point, then the other codeword is also supported on these $8$ points.  Lemma \ref{VScubic8lemma} implies that given such a collection of $8$ points, there are $q-1$ nonzero $c \in C_{2,3}^\perp$ supported on them.  This case contributes $(q-1)^2 \binom{q^2+q+1}{2} \binom{q}{4}^2$ to $B_8^{[2]}$.
\end{itemize}
\noindent This completes the analysis of $B_8^{[2]}$.

We now consider pairs with $\wt(c_1,c_2) = 9$.  By Lemma \ref{supportcubiclemma}, if $c\in C_{2,3}^\perp$ has $\wt(c) \le 9$ then $\supp(c)$ is either a set of collinear points, $8$ or $9$ points on a smooth conic, $8$ or $9$ points on two $\F_q$-rational lines with at least $4$ points on each, or $9$ points that are the intersection of two cubics that do not share a common component.  In this last case, every subset of $8$ of the points imposes independent conditions on cubics.

\begin{itemize}[wide, labelwidth=!, labelindent=0pt]  
\item If $\wt(c_1,c_2) = 9$ and the support of one of these codewords is contained in $9$ points of a smooth conic, then $\supp(c_1,c_2)$ is a set of $9$ points of a smooth conic.  Lemma \ref{VScubic9lemma} implies that given $9$ points of a smooth conic, there are $q^2-1$ nonzero $c \in C_{2,3}^\perp$ supported on them.  There are $(q^2-1)^2$ pairs of nonzero $c_1,c_2\in C_{2,3}^\perp$ with support contained in these $9$ points.  By Lemma \ref{VScubic8lemma},  $\binom{9}{8}(q-1)^2$ of these pairs have~$\wt(c_1,c_2) = 8$.  Therefore, this case contributes $(q^5-q^2)\binom{q+1}{9} \left((q^2-1)^2 - 9(q-1)^2\right)$ to $B_9^{[2]}$.

\item If $\wt(c_1,c_2) = 9$ and one of these codewords is supported on a collection of $9$ points that are the intersection of two cubics that do not share a common component, then the other codeword must also be supported on these $9$ points.  Lemma \ref{VScubic9lemma} implies that given $9$ such points, there are $q-1$ nonzero $c\in C_{2,3}^\perp$ supported on them.  Therefore, the contribution to $B_9^{[2]}$ from this case is $(q-1)^2 I_9(q)$, where  $I_9(q)$ is given in Corollary~\ref{I9cor}.

\item In every other case where $\wt(c_1,c_2) = 9,\ \supp(c_1) \cup \supp(c_2)$ is $9$ points contained in two $\F_q$-rational lines, with at least $4$ points on each.  We consider two cases.
\begin{enumerate}[wide, labelwidth=!, labelindent=0pt]  
\item  Suppose that $\supp(c_1,c_2)$ is a set of $9$ points on two $\F_q$-rational lines, with $5$ points on one line and $4$ points on the other line, not including the intersection point.  There are $(q^2+q+1)(q^2+q)\binom{q}{5} \binom{q}{4}$ ways to choose $9$ points of this type.  Lemma \ref{VScubic9lemma} implies that given $9$ such points, there are $q^2-1$ nonzero $c \in C_{2,3}^\perp$ supported on them.  Therefore, there are $(q^2-1)^2$ pairs of nonzero $c_1,c_2\in C_{2,3}^\perp$ with support contained in these $9$ points.  There are $(q-1)^2$ pairs where $c_1, c_2$ are supported on the $5$ collinear points. There are $\binom{5}{4}(q-1)^2$ pairs where $c_1, c_2$ are each supported on the same subset of $4$ of the $5$ collinear points together with the $4$ points of the other line.  Noting that 
\[
(q^2-1)^2 - 6(q-1)^2 = (q^4-8q^2+12q-5)
\] 
completes the analysis in this case.

\item Suppose that $\supp(c_1,c_2)$ is a set of $9$ points on two $\F_q$-rational lines consisting of the intersection point together with $4$ other points on each line.  There are $\binom{q^2+q+1}{2} \binom{q}{4}^2$ ways to choose $9$ points of this type.  Lemma \ref{VScubic9lemma} implies that given $9$ such points, there are $q^2-1$ nonzero $c \in C_{2,3}^\perp$ supported on them.  Therefore, there are $(q^2-1)^2$ pairs of nonzero $c_1,c_2\in C_{2,3}^\perp$ with support contained in these $9$ points.  These $9$ points contain two subsets of $5$ collinear points.  For each of these subsets, there are $(q-1)^2$ pairs with $c_1, c_2$ supported on these $5$ points.  There are also $(q-1)^2$ pairs where $c_1, c_2$ are both supported on the subset of $8$ of these $9$ points that does not include the intersection point.  Noting that
\[
(q^2-1)^2 - 3(q-1)^2 = (q^4-5q^2+6q-2)
\]
completes the analysis in this case.
\end{enumerate}

\end{itemize}

\end{proof}

\section{Intersections of Two Conics}\label{sec:twoconics}

The goal of this section is to prove a version of Theorem \ref{ThmIntCubics} for intersections of projective  conics.  The proof is much less intricate than the proof of Theorem \ref{ThmIntCubics}, but contains many of the same ideas.

\begin{thm}\label{ThmIntConics}
We have 
\begin{eqnarray*}
W^{[2]}_{C_{2,2}}(X,Y) & = & X^{q^2+q+1} + \frac{(q-1)q(q+1)^2(q^2+q+1)}{2} X^{2q+1} Y^{q^2-q} \\
& + & (q-1)^2 q^3 (q+1)(q^2+q+1) X^{q+2} Y^{q^2-1} \\
& + &  (q-1)(q+1)(q^2+q+1)(2q^3-q^2-q+1) X^{q+1} Y^{q^2} \\
& + & \frac{(q-1)^4 q^4 (q+1)^2 (q^2+q+1)}{24} X^4 Y^{q^2+q-3}\\
& + &  \frac{(q-1)^3 q^4 (q+1)^2(q^2+q+1)}{2} X^3 Y^{q^2+q-2} \\
& + & \frac{(q-1)^2 q^3 (q+1)^2 (q^2+q+1) (q^3-2q^2+7q-4)}{4} X^2 Y^{q^2+q-1} \\
& + & \frac{(q^3-q)(q^3-1)(2q^6+q^5-2q^4+5q^3+6q^2-6q+3)}{6} X Y^{q^2+q}\\
& + & \frac{(q-1)^3 q^4 (q+1)(q^2+q+1)(3q^2+1)}{8} Y^{q^2+q+1}.
\end{eqnarray*}
\end{thm}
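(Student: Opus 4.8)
The plan is to split pairs $(c_1,c_2)$ of codewords by the dimension of the subspace they span, via \eqref{EqSupp}: with $n=q^2+q+1$,
\[
W^{[2]}_{C_{2,2}}(X,Y) = X^{n} + (q^2-1)\,W^{(1)}_{C_{2,2}}(X,Y) + (q^2-1)(q^2-q)\,W^{(2)}_{C_{2,2}}(X,Y).
\]
The one-dimensional part is immediate from Proposition \ref{WC22prop}, since $W^{(1)}_{C_{2,2}} = \frac{1}{q-1}\big(W_{C_{2,2}} - X^{n}\big)$; it records the scalar-multiple pairs, i.e. conics with $q+1$, $2q+1$, or $1$ rational points (a double line or a smooth conic; a pair of $\F_q$-rational lines; a pair of Galois-conjugate lines). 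In particular it accounts entirely for the $X^{2q+1}Y^{q^2-q}$ coefficient and for part of the $X^{q+1}Y^{q^2}$ and $XY^{q^2+q}$ coefficients.

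For $W^{(2)}_{C_{2,2}}$, which is a sum $\sum_{\Pi}X^{b(\Pi)}Y^{\,n-b(\Pi)}$ over pencils $\Pi$ of conics with $b(\Pi)$ the number of $\F_q$-rational base points, I would first dispose of the pencils with a common component over $\overline{\F_q}$. Such a component must be a line $\ell$ (a common conic forces the two members to be proportional), and then $\Pi = \ell\cdot\mathcal L$ with $\mathcal L$ the pencil of lines through some point $P$, so that the base locus is $Z(\ell)\cup\{P\}$ and $b(\Pi)=q+1$ if $P\in\ell$, while $b(\Pi)=q+2$ otherwise. Recovering $(\ell,P)$ from $\Pi$ as (its gcd, the vertex of the residual line-pencil) shows these pencils biject with pairs (line, point): there are $(q^2+q+1)(q+1)$ with $P\in\ell$ and $(q^2+q+1)q^2$ with $P\notin\ell$. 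Multiplying by $|\GL_2(\F_q)|=(q^2-1)(q^2-q)$ yields the $X^{q+2}Y^{q^2-1}$ coefficient outright and completes the $X^{q+1}Y^{q^2}$ coefficient (adding the one-dimensional piece).

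For pencils with \emph{no} common component, B\'ezout's theorem bounds $b(\Pi)\le 4$, so these feed only the monomials $X^{b}Y^{\,n-b}$ with $b\in\{0,1,2,3,4\}$; combined with the previous two steps this shows $W^{[2]}_{C_{2,2}}$ is supported on exactly the nine monomials in the statement, and it remains to pin down $A^{[2]}_{n},\dots,A^{[2]}_{n-4}$. Here I would apply the MacWilliams identity for the second weight enumerator, Theorem \ref{MacWilliamsThm2}: $W^{[2]}_{C_{2,2}^\perp}(X,Y)=q^{-12}\,W^{[2]}_{C_{2,2}}\!\big(X+(q^2-1)Y,\;X-Y\big)$. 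Comparing coefficients of $X^{n-j}Y^{j}$ for $j=0,1,2,3,4$ and using Lemma \ref{lemmalowweightconics} to evaluate the left side as $1,0,0,0,(q^2-1)(q^2+q+1)\binom{q+1}{4}$ gives five linear relations among the nine coefficients of $W^{[2]}_{C_{2,2}}$; the three ``large-$b$'' coefficients being already known, and the relevant $5\times 5$ transition matrix being nonsingular, these relations determine $A^{[2]}_{n},\dots,A^{[2]}_{n-4}$, and expanding and simplifying the resulting rational functions of $q$ produces the stated polynomials. As an alternative avoiding MacWilliams, one can instead compute these five coefficients by inclusion–exclusion over rational base points, classifying the length-$4$ base scheme of a no-common-component pencil (no three of its rational points can be collinear, or a line would be a common component): e.g. $b=3$ corresponds to base schemes ``two rational points plus a rational tangency point'', of which there are $(q^2+q+1)(q-1)\big(\binom{q^2+q}{2}-(q+1)\binom{q}{2}\big)$, and similarly for $b=4,2,1,0$.

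The step I expect to be the main obstacle is the structural analysis of pencils of conics over $\F_q$ — proving that every common component is a single line, that every no-common-component pencil has base locus of length exactly $4$ with no three rational points collinear, and getting the auxiliary counts exactly right; everything afterward is linear algebra and polynomial simplification. One bookkeeping caveat: for $q=3$ the exponents $q+1$ and $4$ coincide, so the nine-term expression is to be read as an identity of polynomials in $q$, the computation being carried out for $q$ large (where all nine exponents are distinct) and then holding identically.
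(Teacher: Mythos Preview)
Your proposal is correct and follows essentially the same route as the paper: the decomposition via \eqref{EqSupp}, the common-component analysis (your Lemma~\ref{LemmaC22}), and then MacWilliams (Theorem~\ref{MacWilliamsThm2}) together with Lemma~\ref{lemmalowweightconics} to solve a $5\times 5$ linear system. Your parametrization of common-component pencils by pairs $(\ell,P)$ is a bit cleaner than the paper's case split (double line versus two $\F_q$-rational line-pairs), but it computes the same two coefficients, and your alternative direct count for $b=4$ is exactly the paper's ``second proof for $c_4$.''
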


Starting from the expression for $W_{C_{2,2}^\perp}(X,Y)$ given in Lemma \ref{lemmalowweightconics}, we prove Theorem \ref{ThmIntConics} in two steps.
\begin{enumerate}
\item We determine the contribution to $W^{[2]}_{C_{2,2}}(X,Y)$ from conics that share a common component.  This determines all but $5$ coefficients of $W^{[2]}_{C_{2,2}}(X,Y)$.
\item We apply Theorem \ref{MacWilliamsThm2} to $W^{[2]}_{C_{2,2}}(X,Y)$.  This gives $5$ linear equations that must be satisfied by the $5$ unknown coefficients.  A linear algebra calculation completes the proof.
\end{enumerate}

Recall from equation \eqref{EqSupp} that
\begin{eqnarray*}
W^{[2]}_{C_{2,2}}(X,Y) & = &  X^{q^2+q+1} + (q+1) \left(W_{C_{2,2}}(X,Y) - X^{q^2+q+1}\right) \\
& & + (q^2-1)(q^2-q) W_{C_{2,2}}^{(2)}(X,Y).
\end{eqnarray*}
Since we have already computed $W_{C_{2,2}}(X,Y)$ in Proposition \ref{WC22prop}, to determine $W^{[2]}_{C_{2,2}}(X,Y)$ we need only consider the contribution to $W^{[2]}_{C_{2,2}}(X,Y)$ from linearly independent pairs of elements in $C_{2,2}$, or equivalently, from distinct nonzero plane conics.

By B\'ezout's theorem, two conics that intersect in more than $4$ points share a common component.  If two distinct conics share a common component, that component must be an $\F_q$-rational line.  We have two possibilities: either one conic is a double line $L$ and the other is $L$ together with another $\F_q$-rational line, or both conics are pairs of $\F_q$-rational lines, with one of those lines in common.  In this last case we have two pairs of lines $\{L, L'\}$ and $\{L,L''\}$, and the number of $\F_q$-intersection points of these pairs depends on whether $L' \cap L''$ lies on $L$ or not.  Adding these cases together proves the following.
\begin{lemm}\label{LemmaC22}
Let $W_{C_{2,2}}^{[2],\text{com}}(X,Y)$ denote the contribution to $W_{C_{2,2}}^{[2]}(X,Y)$ from pairs of nonzero polynomials $(f,g)$ that define distinct conics that share a common component.  Then
\begin{eqnarray*}
W^{[2],\text{com}}_{C_{2,2}}(X,Y) & = &  (q^2+q+1) (q^2+q) q^2 (q-1)^2 X^{q+2} Y^{q^2-1} \\
& & +  (q^2-1)(q^2-q) (q^2+q+1)(q+1) X^{q+1} Y^{q^2},
\end{eqnarray*}
and
\begin{eqnarray*}
W^{[2]}_{C_{2,2}}(X,Y) & = & (q+1) W_{C_{2,2}}(X,Y) -qX^{q^2+q+1} + W^{[2],\text{com}}_{C_{2,2}}(X,Y) \\
&&  +  c_4 X^4 Y^{q^2+q-3}  + c_3 X^3 Y^{q^2+q-2} +c_2 X^2 Y^{q^2+q-1} \\
& & + c_1 X Y^{q^2+q} + c_0 Y^{q^2+q+1},
\end{eqnarray*}
for some values $c_0, c_1, c_2, c_3, c_4$.
\end{lemm}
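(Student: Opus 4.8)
The plan is to start from the support decomposition \eqref{EqSupp}, which combined with the identity $W_{C_{2,2}}(X,Y)=X^{q^2+q+1}+(q-1)W^{(1)}_{C_{2,2}}(X,Y)$ gives
\[
W^{[2]}_{C_{2,2}}(X,Y)=(q+1)W_{C_{2,2}}(X,Y)-qX^{q^2+q+1}+(q^2-1)(q^2-q)W^{(2)}_{C_{2,2}}(X,Y).
\]
Since $W_{C_{2,2}}$ is already known by Proposition \ref{WC22prop}, all that remains is to understand the term $(q^2-1)(q^2-q)W^{(2)}_{C_{2,2}}(X,Y)=\sum_{(f,g)}X^{q^2+q+1-k(f,g)}Y^{k(f,g)}$, the sum over ordered pairs $(f,g)$ of linearly independent nonzero conics, where $k(f,g)$ is the number of common $\F_q$-zeros. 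I would split this sum into the contribution of pairs whose conics have no common component and the contribution of pairs of distinct conics that do share a common component, the latter being $W^{[2],\text{com}}_{C_{2,2}}$ by definition.

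For the first piece, B\'ezout's theorem forces $k(f,g)\le 4$ when $f,g$ have no common component, so that contribution is supported on the monomials $X^iY^{q^2+q+1-i}$ with $0\le i\le 4$; naming its five coefficients $c_0,\dots,c_4$ then produces exactly the claimed shape for $W^{[2]}_{C_{2,2}}$. For this lemma we only need that such $c_i$ exist (in fact they are, up to a known overlap at very small $q$, precisely the quantities the conic analogue of Theorem \ref{ThmIntCubics} will compute).

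For $W^{[2],\text{com}}_{C_{2,2}}(X,Y)$ I would classify ordered pairs $(f,g)$ of distinct conics with a common component over $\overline{\F_q}$. A degree-$2$ curve can share a component with another conic only along a line, and since $f,g\in\F_q[x,y,z]_2$ such a line must be $\F_q$-rational (a shared line defined over $\F_{q^2}\setminus\F_q$ would drag its Galois conjugate along and force both conics to be that conjugate pair, hence equal). Writing the shared line as $L$, exactly one of two things occurs: (Case A) one conic is the double line $2L$ and the other is $L\cup M$ with $M$ an $\F_q$-rational line $\ne L$; or (Case B) both conics are unions of two distinct $\F_q$-rational lines, $L\cup L'$ and $L\cup L''$, with $L',L''\ne L$ and $L'\ne L''$. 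The shared line is unique in each case (two shared lines would make the conics equal), so the cases are exhaustive and disjoint, and after remembering the $(q-1)^2$ scalar multiples there is no overcounting. In Case A the common $\F_q$-points are exactly the $q+1$ points of $L$, contributing $2(q^2+q+1)(q^2+q)(q-1)^2$ to the $X^{q+1}Y^{q^2}$ coefficient. In Case B the common $\F_q$-points are the $q+1$ points of $L$, together with the point $L'\cap L''$ when that point does not lie on $L$; I would therefore split Case B according to whether $L'\cap L''\in L$ or not, counting ordered line-triples $(L,L',L'')$ in each case (using that a fixed point of $L$ lies on $q$ lines other than $L$). The first subcase contributes to $X^{q+1}Y^{q^2}$ and the second to $X^{q+2}Y^{q^2-1}$. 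Summing Case A with the two subcases of Case B and simplifying yields the stated formula for $W^{[2],\text{com}}_{C_{2,2}}$, and substituting into the displayed identity above gives the second assertion.

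All of this is elementary enumeration, so the only genuine care point -- and the place most likely to produce an error -- is the subdivision of Case B: one must count rational points set-theoretically (B\'ezout gives no information for curves with a common component) and must correctly isolate when the extra intersection point $L'\cap L''$ falls on the shared line $L$, since that is exactly what separates the $X^{q+1}Y^{q^2}$ and $X^{q+2}Y^{q^2-1}$ contributions. One should also double-check that Cases A and B together capture every pair of distinct conics with a common component, that nothing is counted twice, and that the degenerate possibilities (two double lines, or a conic paired with itself) are correctly excluded.
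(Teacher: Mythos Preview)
Your proposal is correct and follows essentially the same route as the paper: use the support decomposition \eqref{EqSupp} to reduce to pairs of linearly independent conics, invoke B\'ezout to confine the no-common-component contribution to the monomials $X^iY^{q^2+q+1-i}$ with $0\le i\le 4$, and for the common-component contribution classify into the double-line case (your Case A) and the two-pairs-of-lines case (your Case B), the latter split according to whether $L'\cap L''$ lies on $L$. The paper carries out exactly this case analysis in the paragraph preceding the lemma, so there is nothing to add.
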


We are now ready to prove Theorem \ref{ThmIntConics}.
\begin{proof}
Recall the expression for $W^{[2]}_{C_{2,2}^\perp}(X,Y)$ given in Lemma \ref{lemmalowweightconics}.  We apply Theorem \ref{MacWilliamsThm2} to the expression given in Lemma \ref{LemmaC22}.  Let $M$ be the $5 \times 5$ matrix with rows and columns labeled from $0$ to $4$ and $(i,j)$ entry equal to the $X^{q^2+q+1-i} Y^i$ coefficient of $(X+(q^2-1)Y)^j (X-Y)^{q^2+q+1-j}$. Let $\mathbf{c}$ be the column vector with entries $c_0,\ldots, c_4$.  Then $M \cdot \mathbf{c}$ must be equal to the column vector with $5$ entries, labeled from $0$ to $4$, whose $i$\textsuperscript{th} entry is the $X^{q^2+q+1-i} Y^i$ coefficient of
\begin{eqnarray*}
& & q^{12} W_{C_{2,2}^\perp}^{[2]}(X,Y) - \bigg( (q+1) W_{C_{2,2}}(X+(q^2-1)Y,X-Y)\\
& & -q(X+(q^2-1)Y)^{q^2+q+1}+ W_{C_{2,2}}^{[2],\text{com}}(X+(q^2-1)Y,X-Y)\bigg).
\end{eqnarray*}
A linear algebra computation in Sage shows that
\begin{eqnarray*}
c_0 & = & \frac{1}{8} (q + 1)  (q - 1)^3  q^4  (3q^2 + 1)  (q^2 + q + 1) 
 \\
c_1 &= & \frac{1}{6}  (q + 1)  (q - 1)^2  q^2  (q^2 + q + 1)  (2q^5 + q^4 - 2q^3 + 5q^2 + 6q - 6)  \\
c_2 &= & \frac{1}{4} (q - 1)^2  (q + 1)^2  q^3  (q^2 + q + 1)  (q^3 - 2q^2 + 7q- 4)  \\
c_3 &= & \frac{1}{2}  (q + 1)^2  (q - 1)^3  q^4  (q^2 + q + 1)  \\
c_4 & = & \frac{1}{24}  (q + 1)^2  (q - 1)^4  q^4  (q^2 + q + 1).
\end{eqnarray*}
\end{proof}

In an argument that depends on a calculation, it is natural to be a little skeptical.  As a check, we give a separate more geometric argument for the calculation of~$c_4$.

\begin{proof}[Second Proof for the Computation of $c_4$]
If two conics intersect in $4$ points and do not share a common component, then no $3$ of the intersection points are collinear. Let $S$ be a set of $4$ points in $\Proj^2(\F_q)$ with no $3$ collinear.  There are 
\[
\frac{(q^2+q+1)(q^2+q)q^2(q-1)^2}{4!}
\] 
ways to choose $4$ such points.  Let $\Gamma_{S,2}$ be as in the beginning of Section \ref{sec:supports}. Since $|S| = 4$, Lemma \ref{EGHlemma} implies that these points impose independent conditions on conics. Therefore, $|\Gamma_{S,2}| = q^2$.  There are $(q^2-1)^2$ pairs of nonzero quadratic polynomials vanishing on $S$.  For $(q^2-1)(q-1)$ of these pairs, the two polynomials define the same conic.  For every other pair, the two polynomials define conics that intersect in $S$ and do not share a common component.
\end{proof}

\subsection{Intersections of Affine Conics}

We compute $W^{[2]}_{C^A_{2,2}}(X,Y)$ following the strategy used for $W^{[2]}_{C_{2,2}}(X,Y)$.  We first count affine conics that share a common component.  Two distinct affine conics that share a common component must have a common $\F_q$-rational line.  Considering the possibilities for this pair of conics gives the analogue of Lemma \ref{LemmaC22}.

\begin{lemm}\label{LemmaC22A}
Suppose $q> 2$.  Let $W_{C^A_{2,2}}^{[2],\text{com}}(X,Y)$ denote the contribution to $W_{C^A_{2,2}}^{[2]}(X,Y)$ from pairs of nonzero polynomials $(f,g)$ that define distinct affine conics that share a common component.  Then
\begin{eqnarray*}
W^{[2],\text{com}}_{C^A_{2,2}}(X,Y) & = & (2q + 1)(q + 1)^2 (q - 1)^2 q^2 X^{q} Y^{q^2-q} \\
& & +  (q + 1)^2 (q - 1)^3 q^3 X^{q+1} Y^{q^2-q-1},
\end{eqnarray*}
and
\begin{eqnarray*}
W^{[2]}_{C^A_{2,2}}(X,Y) & = & (q+1) W_{C^A_{2,2}}(X,Y) -qX^{q^2} + W^{[2],\text{com}}_{C^A_{2,2}}(X,Y)\\
&  & + c_4 X^4 Y^{q^2-4} + c_3 X^3 Y^{q^2-3}  +c_2 X^2 Y^{q^2-2} \\
& & + c_1 X Y^{q^2-1} + c_0 Y^{q^2},
\end{eqnarray*}
for some values $c_0, c_1, c_2, c_3, c_4$.
\end{lemm}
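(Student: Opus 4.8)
The plan is to mirror the proofs of Lemma \ref{LemmaC22} and Theorem \ref{ThmIntConics}, now for the affine code. Writing $n=q^2$ and combining the two identities in \eqref{EqSupp} gives \[ W^{[2]}_{C^A_{2,2}}(X,Y)=(q+1)W_{C^A_{2,2}}(X,Y)-qX^{q^2}+(q^2-1)(q^2-q)W^{(2)}_{C^A_{2,2}}(X,Y), \] and since $W_{C^A_{2,2}}(X,Y)$ has already been computed above, everything reduces to the contribution of the ordered pairs $(f,g)$ whose evaluations are linearly independent, i.e.\ the pairs ``defining distinct affine conics''. I would split these into the pairs whose affine zero loci share a common line and those whose loci do not, the former contributing $W^{[2],\text{com}}_{C^A_{2,2}}(X,Y)$.

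For a pair whose loci share no common line I would show there are at most $4$ common affine zeros. If the degree-$2$ homogenizations $F,G$ have no common component in $\Proj^2$, then by B\'ezout they meet in at most $4$ points of $\Proj^2$, hence in at most $4$ points of $\mathbb{A}^2(\F_q)$. If $F$ and $G$ do share a component, it can only be the line at infinity (a shared affine line would descend to a shared component of the affine loci), which forces $\deg f,\deg g\le 1$; then $f,g$ define distinct affine lines, or one of them is a nonzero constant, and the common affine locus has at most one point. Either way $\wt(\ev(f),\ev(g))\ge q^2-4$, so such pairs affect only the coefficients of $X^jY^{q^2-j}$ with $0\le j\le 4$; naming these contributions $c_jX^jY^{q^2-j}$ gives the shape asserted in the statement.

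The substance is $W^{[2],\text{com}}_{C^A_{2,2}}(X,Y)$. First, the shared line must be an $\F_q$-rational affine line $L$: a conjugate pair over $\F_{q^2}$ cannot be shared without forcing $f,g$ proportional, and $L=L_\infty$ is impossible since then $f,g$ have degree $\le 1$ with distinct affine lines as loci. I would then organize the count by the degrees of $f$ and $g$. \emph{Both degree $2$}: the loci are $L\cup M$ and $L\cup N$ for affine lines $M\ne N$; fixing $L$ (there are $q^2+q$ affine lines, and two distinct such conics share a unique common line, so no overcounting), there are $(q^2+q)(q^2+q-1)$ ordered pairs $(M,N)$, of which the $q^2(q^2-1)$ with $M\cap N$ an affine point off $L$ give $q+1$ common affine zeros and the remaining $q(q+1)(2q-1)$ give $q$; multiplying by $q^2+q$ for $L$ and $(q-1)^2$ for the defining polynomials produces the $X^{q+1}Y^{q^2-q-1}$ coefficient $q^3(q+1)^2(q-1)^3$ and a contribution $q^2(q+1)^2(q-1)^2(2q-1)$ to the $X^qY^{q^2-q}$ coefficient. \emph{One of degree $2$, the other of degree $1$}: such a pair is determined by an ordered pair of affine lines — the line $V(g)$ together with the other component of $V(f)$, with repetition allowed, the repeated case being a double line and its underlying line, which is still a linearly independent pair of codewords since $q>2$ — giving $(q^2+q)^2$ pairs, all with exactly $q$ common affine zeros, and likewise for the mirror case. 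Adding $2(q^2+q)^2(q-1)^2$ to the earlier $q^2(q+1)^2(q-1)^2(2q-1)$ gives $q^2(q+1)^2(q-1)^2(2q+1)$ for $X^qY^{q^2-q}$, matching the claim. The structural decomposition of $W^{[2]}_{C^A_{2,2}}(X,Y)$ then follows by combining these with the $c_j$ part and the known $W_{C^A_{2,2}}(X,Y)$.

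The main obstacle is the bookkeeping in the last step: one must recall that in the affine picture a degenerate ``conic'' of degree $1$ (an affine line) that is a component of a genuine conic does contribute to the common-component count — these are exactly the pairs that turn the naive factor $2q-1$ into the correct $2q+1$ — and that ``distinct affine conics'' must be read as linearly independent evaluation vectors rather than distinct zero sets, so that a double line together with its underlying line is a legitimate pair. As with $c_4$ in Theorem \ref{ThmIntConics}, I would cross-check one coefficient (say that of $X^{q+1}Y^{q^2-q-1}$) by a direct interpolation argument using $\Gamma_{S,2}$ for a suitable configuration $S$.
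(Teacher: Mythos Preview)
Your proof is correct and follows essentially the same strategy the paper indicates (mirror the projective argument of Lemma~\ref{LemmaC22}, then enumerate the possible configurations of two affine conics sharing a common $\F_q$-rational line). The paper omits all details here; your explicit case split by $\deg f,\deg g$ and the observation that the degree-$2$/degree-$1$ pairs convert the naive $2q-1$ into $2q+1$ are exactly the adaptations required in the affine setting, and your counts check against the stated coefficients.
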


An analysis of low-weight codewords of $(C_{2,2}^A)^\perp$ analogous to the one for $C_{2,2}^\perp$ shows that $(C_{2,2}^A)^\perp$ contains no nonzero codewords of weight less than $4$, and has ${(q-1)(q^2+q)\binom{q}{4}}$ codewords of weight $4$, exactly $q-1$ for each set of $4$ collinear points in the affine plane over $\F_q$.  It is then easy to show that if we write 
\[
W^{[2]}_{\left(C^A_{2,2}\right)^\perp}(X,Y) = \sum_{i=0}^{q^2} B^{[2]}_i X^{q^2 - i } Y^i,
\]
then $B^{[2]}_0 = 1,\ B^{[2]}_1 = B^{[2]}_2 = B^{[2]}_3 = 0$, and $B^{[2]}_4 = (q^2-1) (q^2+q) \binom{q}{4}$.

We apply Theorem \ref{MacWilliamsThm2} to the expression given in Lemma \ref{LemmaC22A}.  Let $M$ be the $5 \times 5$ matrix with rows and columns labeled from $0$ to $4$ and $(i,j)$ entry equal to the $X^{q^2-i} Y^i$ coefficient of $(X+(q^2-1)Y)^j (X-Y)^{q^2-j}$. Let $\mathbf{c}$ be the column vector with entries $c_0,\ldots, c_4$.  Then $M \cdot \mathbf{c}$ must be equal to the column vector with $5$ entries, labeled from $0$ to $4$, whose $i$\textsuperscript{th} entry is the $X^{q^2-i} Y^i$ coefficient of
\begin{eqnarray*}
& & q^{12} W^{[2]}_{\left(C^A_{2,2}\right)^\perp}(X,Y)- \bigg( (q+1) W_{C^A_{2,2}}(X+(q^2-1)Y,X-Y)\\
& & -q(X+(q^2-1)Y)^{q^2}+ W_{C^A_{2,2}}^{[2],\text{com}}(X+(q^2-1)Y,X-Y)\bigg).
\end{eqnarray*}
A linear algebra computation in Sage proves the following.
\begin{thm}\label{thmaffineconic}
Using the notation of Lemma \ref{LemmaC22A}, 
\begin{eqnarray*}
c_0 & = & \frac{3}{8} q(q+1)(q-1)^2 \left(q^8 + \frac{8}{9} q^7 + \frac{7}{3} q^6 - \frac{19}{9} q^5  + \frac{14}{3} q^4 + \frac{59}{9} q^3 - \frac{8}{3} q^2 + \frac{8}{3}\right)
 \\
c_1 &= & \frac{1}{3}  (q+1)(q-1)^2 q^3 \left(q^6 + 2 q^5 - \frac{5}{2} q^4 - \frac{29}{2} q^3 + \frac{15}{2} q^2 - \frac{27}{2} q + 3\right) \\
c_2 &= & \frac{1}{4}  (q+1)^2 (q-1)^3 q^4 \left(q^3 - 2q^2 +14 q -11 \right) \\
c_3 &= & \frac{2}{3}  \left(q - \frac{5}{4}\right) (q+1)^2 (q-1)^4 q^4 \\
c_4 & = & \frac{1}{24}  (q+1)^2(q-1)^4 q^4 (q^2-3q+3).
\end{eqnarray*}
\end{thm}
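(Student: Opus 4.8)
The plan is to prove Theorem~\ref{thmaffineconic} by the same two-step method already used for Theorem~\ref{ThmIntConics}, now applied to the length-$q^2$ affine code $C^A_{2,2}$ (injective evaluation, hence dimension $6$, since $q>2$). \textbf{Step 1} pins down all but five coefficients of $W^{[2]}_{C^A_{2,2}}(X,Y)$. Applying \eqref{EqSupp} to $C^A_{2,2}$, the contribution from pairs $(c_1,c_2)$ spanning a subspace of dimension $\le 1$ is $(q+1)W_{C^A_{2,2}}(X,Y)-qX^{q^2}$, which is known, and the contribution from pairs of polynomials defining distinct affine conics that share a common component is $W^{[2],\mathrm{com}}_{C^A_{2,2}}(X,Y)$, already computed in Lemma~\ref{LemmaC22A}. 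Any remaining pair gives two distinct members of the family with no common component; by B\'ezout's theorem their homogenizations meet in at most $4$ points of $\Proj^2$, hence in at most $4$ affine points, so $\wt(c_1,c_2)\ge q^2-4$ and such pairs contribute only to the monomials $X^iY^{q^2-i}$ with $0\le i\le 4$. This is exactly the shape recorded in Lemma~\ref{LemmaC22A}, with the five unknowns $c_0,\dots,c_4$.

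\textbf{Step 2} determines $c_0,\dots,c_4$ via MacWilliams duality. On the dual side, the analysis of low-weight codewords of $(C^A_{2,2})^\perp$ runs exactly as in Lemma~\ref{supportconiclemma}: Lemma~\ref{EGHlemma} forces minimum distance $4$, with $q-1$ weight-$4$ codewords on each set of $4$ collinear affine points, hence $(q-1)(q^2+q)\binom{q}{4}$ of them in total. A short count as in the proof of Lemma~\ref{lemmalowweightconics} then gives, for $W^{[2]}_{(C^A_{2,2})^\perp}(X,Y)=\sum_i B^{[2]}_iX^{q^2-i}Y^i$, the values $B^{[2]}_0=1$, $B^{[2]}_1=B^{[2]}_2=B^{[2]}_3=0$, $B^{[2]}_4=(q^2-1)(q^2+q)\binom{q}{4}$. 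Now apply Theorem~\ref{MacWilliamsThm2} with $|C_1||C_2|=q^{12}$: the $X^{q^2-j}Y^j$-coefficient of $q^{12}W^{[2]}_{(C^A_{2,2})^\perp}(X,Y)$ equals the corresponding coefficient of $W^{[2]}_{C^A_{2,2}}(X+(q^2-1)Y,\,X-Y)$. For $j\in\{0,1,2,3,4\}$ the left side is a known polynomial in $q$, and on the right side the unknown part $\sum_{i=0}^4 c_iX^iY^{q^2-i}$ contributes $\sum_{i=0}^4 c_i(X+(q^2-1)Y)^i(X-Y)^{q^2-i}$. Collecting these five coefficient identities yields a linear system $M\mathbf{c}=\mathbf{v}$, where $M$ is the $5\times5$ matrix whose $(j,i)$ entry is the $X^{q^2-j}Y^j$-coefficient of $(X+(q^2-1)Y)^i(X-Y)^{q^2-i}$ and $\mathbf{v}$ is the explicit vector built from $W_{C^A_{2,2}}$, $W^{[2],\mathrm{com}}_{C^A_{2,2}}$, and the $B^{[2]}_j$.

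\textbf{The crux, and a check.} What remains is to verify that $M$ is invertible as a matrix over $\Q(q)$ that stays nonsingular for $q>2$ (for instance its row $j=0$ is $(1,1,1,1,1)$ and its row $j=1$ is $q^2(-1,0,1,2,3)$, so the determinant is an explicit nonzero polynomial in $q$), and then to solve $\mathbf{c}=M^{-1}\mathbf{v}$ and simplify; this symbolic linear-algebra and polynomial-simplification step, rather than any conceptual difficulty, is the main obstacle, and it is carried out in Sage to produce the stated formulas for $c_0,\dots,c_4$. As an independent sanity check one can recount $c_4$ geometrically, mirroring the second proof of $c_4$ in Theorem~\ref{ThmIntConics}: two distinct affine conics without common component meet in $4$ affine points with no $3$ collinear, there are $\tfrac{1}{24}q^3(q-1)^2(q+1)(q^2-3q+3)$ such point sets, each imposes independent conditions on conics by Lemma~\ref{EGHlemma} so it lies on $q^2$ conics, and of the $(q^2-1)^2$ ordered pairs of nonzero defining polynomials exactly $(q^2-1)(q-1)$ define a single conic; multiplying $\tfrac{1}{24}q^3(q-1)^2(q+1)(q^2-3q+3)$ by $(q^2-1)^2-(q^2-1)(q-1)=q(q-1)^2(q+1)$ recovers $c_4=\tfrac{1}{24}(q+1)^2(q-1)^4q^4(q^2-3q+3)$.
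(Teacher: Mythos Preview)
Your proposal is correct and follows essentially the same approach as the paper: set up the five unknowns via Lemma~\ref{LemmaC22A}, compute the low-weight dual coefficients $B^{[2]}_0,\dots,B^{[2]}_4$, apply Theorem~\ref{MacWilliamsThm2} to obtain a $5\times 5$ linear system, and solve in Sage. Your extra geometric verification of $c_4$ is a nice addition the paper does not carry out in the affine case (it only does this for the projective analogue in Theorem~\ref{ThmIntConics}), and your explicit remarks on the invertibility of $M$ go slightly beyond what the paper states.
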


We will use this result in Section \ref{sec:large_int} when we count pairs of cubics that share an $\F_q$-rational line.

\section{Intersections of a Conic and a Cubic}\label{sec:con_cub}

In this section we explain how to use the strategy of Section \ref{sec:twoconics} to determine the number of pairs of $f\in \F_q[x,y,z]_2$ and $g\in \F_q[x,y,z]_3$ that have a given number of common zeros in $\Proj^2(\F_q)$.  Throughout the rest of this section suppose that $q>2$.

By B\'ezout's theorem, a conic and a cubic that intersect in more than $6$ points must share a common component.  We determine the contribution to $W^{[2]}_{C_{2,2},C_{2,3}}(X,Y)$ from pairs that share a common component.  This leaves $7$ unknown coefficients of $W^{[2]}_{C_{2,2},C_{2,3}}(X,Y)$.
\begin{lemm}\label{LemmaC2C3}
Let $W^{[2],\text{com}}_{C_{2,2},C_{2,3}}(X,Y)$ denote the contribution to $W^{[2]}_{C_{2,2},C_{2,3}}(X,Y)$ from pairs of nonzero polynomials $(f,g)$ defining a nonzero conic and a nonzero cubic that share a common component.  Then
\begin{eqnarray*}
W^{[2],\text{com}}_{C_{2,2},C_{2,3}}(X,Y) & = & \frac{q^2-q}{2} (q-1)^2 (q^2+q+1)^2 X Y^{q^2+q} \\
& & + \frac{1}{2}q(q+1)(q^3-1)^2 X^{2q+1} Y^{q^2-q} \\
& & +  \frac{1}{2} (q^2 + q + 1)(q + 1)(q - 1)^3 q^5 X^{q+3} Y^{q^2-2} \\
& & +  2(q^2 + q + 1)(q + 1)(q - 1)^2 q^5 X^{q+2} Y^{q^2-1} \\
& & +  \frac{1}{2} (q^2+q+1)(q-1)^2(q^7+5q^5+4q^4+2q^3+2q+2) X^{q+1} Y^{q^2},
\end{eqnarray*}
and
\begin{eqnarray*}
W^{[2]}_{C_{2,2},C_{2,3}}(X,Y)& = & W_{C_{2,2}}(X,Y) + W_{C_{2,3}}(X,Y)  - X^{q^2+q+1} +W^{[2],\text{com}}_{C_{2,2},C_{2,3}}(X,Y)\\
& & +  c_6 X^6 Y^{q^2+q-5} + c_5 X^5 Y^{q^2+q-4} + c_4 X^4 Y^{q^2+q-3} + c_3 X^3 Y^{q^2+q-2} \\
& & +   c_2 X^2 Y^{q^2+q-1} + c_1 X Y^{q^2+q} + c_0 Y^{q^2+q+1},
\end{eqnarray*}
for some values $c_0, c_1, c_2, c_3, c_4, c_5, c_6$.
\end{lemm}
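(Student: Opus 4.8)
The plan is to mirror the two-step strategy of Section~\ref{sec:twoconics}. First, observe that $W^{[2]}_{C_{2,2},C_{2,3}}(X,Y)$ splits according to the pair $(f,g)$: pairs with $f=0$ or $g=0$ contribute exactly $W_{C_{2,2}}(X,Y)+W_{C_{2,3}}(X,Y)-X^{q^2+q+1}$, where the subtraction removes the doubly counted $(0,0)$ term, since for such a pair $\wt(f,g)$ equals the weight of whichever polynomial is nonzero. For pairs with both $f$ and $g$ nonzero and defining curves with no common component over $\overline{\F_q}$, B\'ezout's theorem bounds the number of intersection points over $\overline{\F_q}$, hence the number of common $\F_q$-points, by $2\cdot 3=6$; such pairs therefore contribute only to the coefficients of $X^j Y^{q^2+q+1-j}$ with $0\le j\le 6$, which we name $c_0,\dots,c_6$. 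This already yields the asserted form of $W^{[2]}_{C_{2,2},C_{2,3}}(X,Y)$ and reduces everything to an exact evaluation of $W^{[2],\text{com}}_{C_{2,2},C_{2,3}}(X,Y)$, the contribution of pairs of nonzero polynomials whose curves share a common component.

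To compute $W^{[2],\text{com}}_{C_{2,2},C_{2,3}}(X,Y)$ I would stratify the relevant pairs by the degree of $\gcd(f,g)$ in $\F_q[x,y,z]$. If $\deg\gcd(f,g)=2$ then $f\mid g$, so $g=f\ell$ for a nonzero linear form $\ell$ and $C\cap D=C$; the number of common $\F_q$-points is then $|C(\F_q)|$, which is $q+1$ if $C$ is a double line or a smooth conic, $2q+1$ if $C$ is a union of two distinct $\F_q$-rational lines, and $1$ if $C$ is a union of $\F_{q^2}$-conjugate lines. Using the number of polynomials of each conic type (as recorded around Proposition~\ref{WC22prop}) together with the $q^3-1$ choices of $\ell$ gives this contribution. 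If $\deg\gcd(f,g)=1$, write $\gcd(f,g)=L$ for an $\F_q$-rational line $L$; then $f$ defines $L\cup L'$ for a residual $\F_q$-rational line $L'$, $g$ defines $L\cup Q$ for a residual conic $Q$ with $L$ not a component of $Q$, and as sets $C\cap D=L\cup(L'\cap Q)$, so the number of common $\F_q$-points is $q+1$ plus the number of $\F_q$-points of $L'\cap Q$ lying off $L$. The remaining work is an elementary case analysis on the configuration of $L'$ relative to $Q$: whether $L'=L$ (so $C$ is a double line) or $L'\ne L$; whether $Q$ is smooth or a union of lines; and whether $L'$ meets $Q$ in $0$, $1$, or $2$ $\F_q$-points and how many of these lie on $L$. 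Summing the resulting contributions and adding the $\deg\gcd=2$ contribution produces the claimed formula; I would sanity check it by specializing $X=Y=1$ and comparing against an independent count of pairs of nonzero forms of degrees $2$ and $3$ with a nonconstant common factor, and by comparing the overall shape with the analogous two-conic Lemma~\ref{LemmaC22}.

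The main obstacle is precisely the $\deg\gcd=1$ bookkeeping. Unlike the two-conic case in Section~\ref{sec:twoconics}, where the residual objects are single lines, here the cubic contributes a residual \emph{conic}, so its intersection with the residual line of the quadric breaks into many sub-cases, and one must be vigilant about the boundaries between strata: a pair in which $L'$ is a component of $Q$ actually has $f\mid g$ and belongs to the previous stratum, not this one; and the double-line case $L'=L$ must be counted once, not once as ``$C$ a double line'' and again elsewhere. Organizing the count so that these strata are disjoint and exhaustive --- for instance by first fixing $L$, then choosing $L'$, then choosing $Q$ with $L\nmid Q$, and finally reading off $|L'\cap Q|$ and its position relative to $L$ --- and then collecting terms by the resulting $\F_q$-intersection number, is where essentially all of the effort lies; the rest is formal.
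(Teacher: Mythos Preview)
Your proposal is correct and reaches the same decomposition of $W^{[2]}_{C_{2,2},C_{2,3}}$ as the paper, but the paper organizes the common-component count differently and uses a shortcut in the main case that is worth knowing. Rather than stratifying by $\deg\gcd(f,g)$, the paper stratifies by the type of the conic $f$ (pair of Galois-conjugate lines, smooth conic, pair of distinct $\F_q$-rational lines, double line) together with whether the cubic contains all of $C$ or only one of its rational lines. In particular, the double-line conic is handled in one stroke by counting all $q^6-1$ nonzero cubics vanishing on the underlying line, whereas your gcd stratification splits that case between the two strata; both partitions are valid, they just cut along different seams.

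The substantive difference is in the residual case (your $\deg\gcd=1$ with $L'\ne L$). Instead of classifying the residual conic $Q$ by type and then counting $|(L'\cap Q)\setminus L|$ case by case, the paper fixes coordinates so that $L=\{x=0\}$ and $L'=\{z=0\}$, writes $g=x\cdot Q$ with $Q=a_0x^2+a_1xy+a_2xz+a_3y^2+a_4yz+a_5z^2$, and observes that the number of extra common points is exactly the number of $\F_q$-roots of the univariate polynomial $Q(1,y,0)=a_3y^2+a_1y+a_0$, while $a_2,a_4,a_5$ are completely free (contributing a factor of $q^3$). This collapses your proposed case analysis on the type of $Q$ into a single five-way split on $(a_0,a_1,a_3)\ne(0,0,0)$ according to the number of affine roots, which is quicker to tabulate and makes the final assembly of the five coefficients short. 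Your route would arrive at the same numbers, but with more intermediate strata to track and reconcile.
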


\begin{proof}
If a conic and a cubic share a common component, that component must either be: a pair of Galois-conjugate lines defined over $\F_{q^2}$ but not over $\F_q$, a smooth conic, a pair of distinct $\F_q$-rational lines, or a single $\F_q$-rational line.  
\begin{itemize}
\item If a conic and a cubic share a pair of Galois-conjugate lines defined over $\F_{q^2}$ but not over $\F_q$, then the conic is this pair of lines, and the cubic is this pair of lines together with an additional $\F_q$-rational line.  The only $\F_q$-rational point in the intersection of such a conic and cubic, is the single rational point of the pair of Galois-conjugate lines.  There are $(q^2+q+1)(q^2-q)/2$ pairs of Galois-conjugate lines, $q^2+q+1$ choices for the additional line of the cubic. We include a factor of $(q-1)^2$ to account for scaling.

\item There are $q^5-q^2$ smooth conics defined over $\F_q$. The number of nonzero cubic polynomials vanishing on a given smooth conic is $q^3-1$.  

\item There are $\binom{q^2+q+1}{2}$ pairs of distinct $\F_q$-rational lines.  The number of nonzero cubic polynomials vanishing on a given pair of lines is $q^3-1$.  

\item There are $(q-1)(q^2+q+1)$ quadratic polynomials that define a double line.  There are $q^6-1$ nonzero cubic polynomials vanishing on a given line.
\end{itemize}

In every other case in which a conic and a cubic share a common component, the conic factors as a product of a pair of distinct $\F_q$-rational lines, and the cubic contains one, but not both, of those lines.  The number of $\F_q$-points of the intersection depends on how the cubic intersects the second line.  

We focus on the particular case where the conic is defined by the polynomial $xz$, and the cubic contains the line $x = 0$ but not $z=0$.  At the end of the calculation we multiply by $(q^2+q+1)(q^2+q)$ to account for the choice of a pair of lines and the choice of the shared line.

A cubic polynomial that vanishes on the line $x=0$ is of the form
\[
f_3(x,y,z) = x(a_0 x^2 + a_1 xy + a_2 xz + a_3 y^2 + a_4 yz + a_5 z^2).
\]
Since the cubic does not contain the line $z=0$ at least one of $a_0, a_1, a_3$ is nonzero.  We want to count the number of zeros of this polynomial on the line $z=0$ away from the point $[0:1:0]$.  Therefore, we need only count the number of zeros of $a_0 + a_1 y + a_3 y^2$ on the affine line with coordinate $y$.  Note that this does not depend on $a_2, a_4,$ or $a_5$, which leads to a factor of $q^3$ at the end of the count.

There are $q^3-1$ choices of $(a_0,a_1, a_3)$:
\begin{enumerate}
\item There are $(q-1) \binom{q}{2}$ choices that give a quadratic polynomial with distinct $\F_q$-rational roots.
\item There are $(q-1) q$ choices that give a quadratic polynomial with a double root.
\item If $a_3 = 0$ and $a_1 \neq 0$ we get a linear polynomial with a single $\F_q$-rational root.
\item If $a_3 = a_1 = 0$ and $a_0 \neq 0$ we get a constant polynomial with no $\F_q$-rational roots.
\item There are $(q-1) \frac{q^2-q}{2}$ choices that give an irreducible quadratic polynomial.
\end{enumerate}
Combining these observations completes the proof. For example note that 
\begin{eqnarray*}
& & (q-1)(q^5-q^2)(q^3-1)+(q-1)(q^2+q+1)(q^6-1) \\
& &+(q-1)(q^2+q+1)(q^2+q)q^3\left((q-1)\frac{q^2-q}{2} + (q-1)\right) \\ 
& & = \frac{1}{2}(q^2+q+1)(q-1)^2 \left(q^7+5q^5+4q^4+2q^3+2q+2\right).
\end{eqnarray*}

\end{proof}

We are now ready to prove the following result.
\begin{thm}
Using the notation of Lemma \ref{LemmaC2C3}, 
\begin{eqnarray*}
c_0 & = & \frac{53}{144} (q-1)^3 q^6 (q^2+q+1) \left(q^5 + q^4 + \frac{9}{53} q^3 + \frac{27}{53} q^2 + \frac{58}{53} q - \frac{32}{53}\right)
 \\
c_1 &= & \frac{11}{30}  q^4 (q-1)^2 (q^2+q+1) (q+1) \cdot\\ 
& &  \left(q^7 + \frac{1}{44} q^6 + \frac{5}{11} q^5 + \frac{20}{11} q^4 - \frac{31}{11} q^3 +\frac{159}{44} q^2 + \frac{15}{11}q - \frac{30}{11}\right) \\
c_2 &= & \frac{3}{16} (q-1)^2 (q+1)^2 q^5 (q^2+q+1) \left(q^5 - \frac{2}{9} q^4 + \frac{35}{9}q^3 - \frac{70}{9} q^2 + \frac{160}{9} q - \frac{32}{3} \right) \\
c_3 &= & \frac{1}{18}  (q+1)(q-1)^3 q^5 (q^2+q+1) \left(q^5+\frac{9}{2} q^4 + \frac{3}{2} q^3 + \frac{39}{2} q^2 + \frac{79}{2} q - 9\right) \\
c_4 & = & \frac{1}{48}  (q+1)(q-1)^3 q^6 (q^2+q+1) \left(q^4 + 13 q^2 + 26q - 48\right)\\
c_5 &= & \frac{1}{24}  (q+1)(q-1)^4 q^6 (q^2+q+1) (q^2+2q-5) \\
c_6 & = & \frac{1}{720}  (q-2)(q+1)(q-1)^4q^6 (q^2+q+1)(q^2+3q-8).
\end{eqnarray*}
\end{thm}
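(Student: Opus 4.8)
The plan is to follow the same two-step strategy used to prove Theorem~\ref{ThmIntConics}. Lemma~\ref{LemmaC2C3} already determines every coefficient of $W^{[2]}_{C_{2,2},C_{2,3}}(X,Y)$ in terms of known quantities together with the seven unknowns $c_0,\dots,c_6$, which record exactly the pairs $(f,g)$ defining a nonzero conic and a nonzero cubic with \emph{no} common component; by B\'ezout such a pair has at most six common $\F_q$-points, which is why no further unknown is needed, and the only undetermined coefficients are those of $X^iY^{q^2+q+1-i}$ for $0\le i\le 6$. It therefore suffices to produce seven independent linear relations among $c_0,\dots,c_6$, and these come from the second MacWilliams identity.

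First I would apply Theorem~\ref{MacWilliamsThm2} with $C_1=C_{2,2}$ and $C_2=C_{2,3}$, so that $|C_1|\cdot|C_2|=q^{6}\cdot q^{10}=q^{16}$ and
\[
q^{16}\,W^{[2]}_{C_{2,2}^{\perp},C_{2,3}^{\perp}}(X,Y)=W^{[2]}_{C_{2,2},C_{2,3}}\bigl(X+(q^{2}-1)Y,\ X-Y\bigr).
\]
The coefficient of $X^{q^2+q+1-k}Y^{k}$ on the left, for $0\le k\le 6$, equals $q^{16}B^{[2]}_{k}$, and Lemma~\ref{lemmalowweightconiccubic} gives these as explicit polynomials in $q$ once one substitutes the known low-weight dual coefficients from Proposition~\ref{WC22Dualprop} and Theorem~\ref{C23LowWeightDual}. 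On the right, substitute the decomposition from Lemma~\ref{LemmaC2C3}. The ``known'' summand $W_{C_{2,2}}+W_{C_{2,3}}-X^{q^2+q+1}+W^{[2],\mathrm{com}}_{C_{2,2},C_{2,3}}$ transforms into an expression whose low-weight coefficients are polynomial in $q$: the term $X^{q^2+q+1}$ becomes $(X-Y)^{q^2+q+1}$; the polynomial expression $W^{[2],\mathrm{com}}_{C_{2,2},C_{2,3}}$ transforms termwise; and although $W_{C_{2,3}}$ carries class numbers, writing $X+(q^{2}-1)Y=A+(q-1)B$ and $X-Y=A-B$ with $A=X+(q-1)Y$, $B=qY$ and invoking the ordinary MacWilliams theorem shows $W_{C_{2,3}}(X+(q^2-1)Y,X-Y)=q^{10}\sum_{i}B^{2,3}_{i}\,q^{i}\,(X+(q-1)Y)^{q^2+q+1-i}Y^{i}$, whose coefficients of $X^{q^2+q+1-k}Y^{k}$ with $k\le 6$ involve only the polynomials $B^{2,3}_{0},\dots,B^{2,3}_{6}$ of Theorem~\ref{C23LowWeightDual} (the same reduction, with exponent $6$ and $B^{2,2}_{i}$, handles $W_{C_{2,2}}$). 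The unknown summand $\sum_{i=0}^{6}c_iX^iY^{q^2+q+1-i}$ becomes $\sum_{i=0}^{6}c_i(X+(q^2-1)Y)^i(X-Y)^{q^2+q+1-i}$.

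Extracting the coefficient of $X^{q^2+q+1-k}Y^{k}$ for $k=0,\dots,6$ then yields a linear system $M\mathbf{c}=\mathbf{v}$, where $M$ is the $7\times 7$ matrix whose $(k,i)$ entry is the $X^{q^2+q+1-k}Y^{k}$-coefficient of $(X+(q^2-1)Y)^{i}(X-Y)^{q^2+q+1-i}$ (a polynomial in $q$ built from binomial coefficients such as $\binom{q^2+q+1}{k}$), and $\mathbf v$ has $k$-th entry $q^{16}B^{[2]}_{k}$ minus the $X^{q^2+q+1-k}Y^{k}$-coefficient of the transformed known summand. One checks directly that $\det M$ is a nonzero rational function of $q$, hence $\mathbf{c}=M^{-1}\mathbf v$ is uniquely determined; a symbolic linear-algebra computation in Sage, exactly as for Theorem~\ref{ThmIntConics} and Theorem~\ref{thmaffineconic}, then produces the stated formulas. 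The main work is bookkeeping: correctly forming the MacWilliams substitution for a length-$(q^2+q+1)$ enumerator, assembling $W^{[2],\mathrm{com}}_{C_{2,2},C_{2,3}}(X,Y)$ via Lemma~\ref{LemmaC2C3}, and extracting seven low-weight coefficients; there is no conceptual obstacle once one observes that the class-number contributions are absorbed through Theorem~\ref{C23LowWeightDual}. As a consistency check one can compare leading terms with Entin's Theorem~\ref{ThmEntin}: since $\pi(6,6)/6!=1/720$ the $q^{16}$-coefficient of $c_6$ should be $1/720$, while $\pi(5,6)=0$ forces $c_5$ to have degree $15$ rather than $16$, both of which match the displayed formulas.
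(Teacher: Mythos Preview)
Your proposal is correct and follows essentially the same route as the paper: apply Theorem~\ref{MacWilliamsThm2}, feed in the low-weight dual coefficients from Lemma~\ref{lemmalowweightconiccubic}, reduce to a $7\times 7$ linear system in $c_0,\dots,c_6$, and solve symbolically in Sage. Your MacWilliams change-of-variables $A=X+(q-1)Y$, $B=qY$ to show that only $B^{2,3}_0,\dots,B^{2,3}_6$ enter the first seven coefficients is a cleaner justification than the paper's citation to \cite[Section~4]{KaplanAGCT}, though note one slip: under the substitution the monomial $X^{q^2+q+1}$ becomes $(X+(q^2-1)Y)^{q^2+q+1}$, not $(X-Y)^{q^2+q+1}$.
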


\begin{proof}
We apply Theorem \ref{MacWilliamsThm2} to the expression given in Lemma \ref{LemmaC2C3}.  Let $M$ be the $7 \times 7$ matrix with rows and columns labeled from $0$ to $6$ and $(i,j)$ entry equal to the $X^{q^2+q+1-i} Y^i$ coefficient of $(X+(q^2-1)Y)^j (X-Y)^{q^2+q+1-j}$. Let $\mathbf{c}$ be the column vector with entries $c_0,\ldots, c_6$.  Then $M \cdot \mathbf{c}$ must be equal to the column vector with $7$ entries, labeled from $0$ to $6$, whose $i$\textsuperscript{th} entry is the $X^{q^2+q+1-i} Y^i$ coefficient of
\begin{eqnarray*}
& & q^{16} W^{[2]}_{C_{2,2}^\perp, C_{2,3}^\perp}(X,Y)- \bigg( W_{C_{2,2}}(X+(q^2-1)Y,X-Y) + W_{C_{2,3}}(X+(q^2-1)Y,X-Y) \\
& & -(X+(q^2-1)Y)^{q^2+q+1}+ W_{C_{2,2},C_{2,3}}^{[2],\text{com}}(X+(q^2-1)Y,X-Y)\bigg).
\end{eqnarray*}
The $X^{q^2+q+1-i}Y^i$ coefficient of $W_{C_{2,3}}(X+(q-1)Y,X-Y)$ is a polynomial in $q$ for each $i \in [0,9]$ \cite[Section 4]{KaplanAGCT}.  For the same reason, for each $i \in [0,9]$ the $X^{q^2+q+1-i}Y^i$ coefficient of $W_{C_{2,3}}(X+(q^2-1)Y,X-Y)$ is also a polynomial in $q$.  

A linear algebra calculation in Sage completes the proof.

\end{proof}

Just as we did following the proof of Theorem \ref{ThmIntConics}, we include a separate more geometric argument for the computation of one of the weight enumerator coefficients.

\begin{proof}[Second Proof for the Computation of $c_6$]
If a conic and a cubic intersect in $6$ points and do not share a common component, then no $4$ of these points are collinear. By Lemma \ref{EGHlemma}, such a collection of points imposes independent conditions on cubics.  If $3$ of the points are collinear, then the conic must contain this line.  If a conic contains $6$ points with no $4$ on a line, and $3$ of those points are collinear, then the remaining $3$ points must also be collinear and none of the $6$ points is the intersection point of the two lines.  

Suppose $S$ is a collection of $6$ points in $\Proj^2(\F_q)$ with no $4$ collinear.  Since $S$ imposes independent conditions on cubics, there are $q^4-1$ nonzero cubic polynomials vanishing on $S$.  We divide the count into two cases.

\begin{enumerate}[wide, labelwidth=!, labelindent=0pt]  
\item Suppose $S$ is a set of $6$ points on a smooth conic.  There are $q-1$ nonzero quadratic polynomials vanishing on $S$, so there are $(q-1)(q^4-1)$ pairs of a nonzero quadratic polynomial and a nonzero cubic polynomial such that the corresponding conic and cubic contain $S$ in their intersection.  In $(q-1)(q^3-1)$ of these cases, the cubic consists of this conic together with an additional line.

\item Suppose $S$ is a set of $6$ points on two $\F_q$-rational lines, with $3$ points on each, not including the intersection point.  There are $q-1$ nonzero quadratic polynomials vanishing on $S$, so there are are $(q-1)(q^4-1)$ pairs of a nonzero quadratic polynomial and a nonzero cubic polynomial such that the corresponding conic and cubic contain $S$ in their intersection.  By B\'ezout's theorem, a cubic that contains a line and also contains $3$ collinear points that do not lie on that line, must contain the second line as well.  Therefore, there are $(q-1)(q^3-1)$ pairs such that the conic and cubic share a common component.
\end{enumerate}
Noting that
\begin{eqnarray*}
&(q^5-q^2) \binom{q+1}{6} (q-1)\left(q^4-1-(q^3-1)\right) +  \binom{q^2+q+1}{2}\binom{q}{3}^2 (q-1)\left(q^4-1-(q^3-1)\right) \\
& = \frac{1}{720}  (q-2)(q+1)(q-1)^4q^6 (q^2+q+1)(q^2+3q-8)
\end{eqnarray*}
completes the proof.
\end{proof}

\section{Cubic Curves that Share a Common Component}\label{sec:large_int}

The goal of this section is to prove analogues for a pair of cubics of Lemmas \ref{LemmaC22}, \ref{LemmaC22A}, and \ref{LemmaC2C3}.

\begin{prop}\label{LemmaC23}
Let $W_{C_{2,3}}^{[2],\text{com}}(X,Y)$ denote the contribution to $W_{C_{2,3}}^{[2]}(X,Y)$ from pairs of nonzero polynomials $(f,g)$ that define distinct cubic curves that share a common component.  Then
\begin{eqnarray*}
W_{C_{2,3}}^{[2],\text{com}}(X,Y) & = &  a_1 X Y^{q^2+q} + a_2 X^2 Y^{q^2+q-1} + a_{2q+1} X^{2q+1} Y^{q^2-q} \\
& &+ a_{2q+2} X^{2q+2} Y^{q^2-q-1}  + \sum_{i=q+1}^{q+5} a_{i} X^{i} Y^{q^2+q+1 - i},
\end{eqnarray*}
where 
\begin{eqnarray*}
a_1 & = & \frac{1}{2}(q-1)^2 (q^2+q+1) (q^2-q) (q+1)q \\
a_2 & = &  \frac{1}{2}(q-1)^2 (q^2+q+1) (q^2-q) (q^2+q)(q+1)q \\
a_{2q+1} & = & \frac{1}{2}(q^2+q+1)(2q+1)(q-1)^2q^2(q+1)^2 \\
a_{2q+2} & = & \frac{1}{2}(q^2+q+1)(q-1)^2(q^2+q)^2(q^2-q) \\
a_{q+1} & = & \cfrac{1}{24} (9q^8 + 8q^7 + 21q^6 - 19q^5 + 66q^4 + 59q^3 - 48q^2 +24)(q^2 + q + 1)(q + 1)(q - 1)^2 q \\
a_{q+2} &= & 	\frac{1}{6}(2q^5 + 2q^4 - 7q^3 + 42q^2 - 33q + 6)(q^2 + q + 1)(q + 1)^2(q - 1)^2q^3 \\
a_{q+3} &= & 	\frac{1}{4}(q^3 - 2q^2 + 14q - 11)(q^2 + q + 1)(q + 1)^2(q - 1)^3q^4 \\
a_{q+4} &= &  \frac{1}{6}(q^2 + q + 1)(4q - 5)(q + 1)^2(q - 1)^4 q^4 \\
a_{q+5} & = & \frac{1}{24}(q-1)^4q^4(q+1)^2(q^2+q+1)(q^2-3q+3).
\end{eqnarray*}
We have
\begin{eqnarray*}
W_{C_{2,3}}^{[2]}(X,Y)& = &  (q+1) W_{C_{2,3}}(X,Y) -qX^{q^2+q+1} + W_{C_{2,3}}^{[2],\text{com}}(X,Y) \\
& & +  \sum_{j=0}^9 c_j X^j Y^{q^2+q+1-j},
\end{eqnarray*}
for some values $c_0, c_1, \ldots, c_9$.
\end{prop}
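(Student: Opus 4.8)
The plan is to express everything in terms of the known Hamming weight enumerator $W_{C_{2,3}}(X,Y)$ of Elkies and a direct count of pairs of cubics sharing a common component. Equation~\eqref{EqSupp} together with $W_{C_{2,3}}(X,Y) = X^{q^2+q+1}+(q-1)W^{(1)}_{C_{2,3}}(X,Y)$ gives
\[
W^{[2]}_{C_{2,3}}(X,Y) = (q+1)W_{C_{2,3}}(X,Y) - qX^{q^2+q+1} + (q^2-1)(q^2-q)\,W^{(2)}_{C_{2,3}}(X,Y),
\]
so it suffices to understand $W^{(2)}_{C_{2,3}}$, the sum over $2$-dimensional subspaces $S\subseteq C_{2,3}$ of $X^{\,q^2+q+1-\wt(S)}Y^{\wt(S)}$. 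Whether the cubics in a pencil share a common component over $\overline{\F_q}$ depends only on $S$: if $f=hq_f$ and $g=hq_g$ then $\alpha f+\beta g = h(\alpha q_f+\beta q_g)$. Hence $W^{(2)}_{C_{2,3}}$ splits into the contribution of pencils whose members share a component and the contribution of the remaining (``coprime'') pencils. Each $2$-dimensional subspace is spanned by $(q^2-1)(q^2-q)$ ordered pairs, all with the same support, so $(q^2-1)(q^2-q)$ times the first contribution is exactly $W^{[2],\mathrm{com}}_{C_{2,3}}(X,Y)$; and by B\'ezout two cubics with no common component meet in at most $9$ points of $\Proj^2(\overline{\F_q})$, hence in at most $9$ points of $\Proj^2(\F_q)$, so $(q^2-1)(q^2-q)$ times the second contribution is supported on the monomials $X^jY^{q^2+q+1-j}$ with $0\le j\le 9$. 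This is precisely the second displayed identity, with $c_0,\dots,c_9$ the (undetermined) coefficients of that last sum.

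It remains to establish the first displayed identity, i.e.\ to compute $W^{[2],\mathrm{com}}_{C_{2,3}}$. I would begin by noting that for forms over $\F_q$ the greatest common divisor is the same whether computed over $\F_q$ or over $\overline{\F_q}$ (standard, for instance via subresultants), so a pair of distinct cubics shares a common component if and only if $h:=\gcd(f,g)$ is a nonconstant $\F_q$-form, necessarily of degree $1$ or $2$. This partitions the ordered pairs $(f,g)$ counted by $W^{[2],\mathrm{com}}_{C_{2,3}}$ according to the type of the curve $V(h)$: a single $\F_q$-line, a double $\F_q$-line, a pair of distinct $\F_q$-lines, a smooth $\F_q$-conic, or a conjugate pair of lines over $\F_{q^2}$. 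In the four degree-$2$ cases one writes $f=h\ell_1$, $g=h\ell_2$ with $\ell_1,\ell_2$ distinct $\F_q$-linear forms, and $V(f)\cap V(g) = V(h)\cup\bigl(V(\ell_1)\cap V(\ell_2)\bigr)$; since $V(\ell_1)\cap V(\ell_2)$ is a single $\F_q$-point, the number of common $\F_q$-zeros is $\#V(h)(\F_q)$ or $\#V(h)(\F_q)+1$ according as this point does or does not lie on $V(h)$, and counting divisors of each type (with a factor $(q-1)^2$ for rescaling $f,g$, a factor $(q+1)q$ for an ordered pair of distinct lines through a prescribed point, etc.) yields $a_1,a_2,a_{2q+1},a_{2q+2}$ and the degree-$2$ part of $a_{q+1},a_{q+2}$. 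In the remaining case $h$ is a line $\ell$ and $f=\ell q_1$, $g=\ell q_2$ with $q_1,q_2$ coprime conics, so $V(f)\cap V(g)=V(\ell)\cup\bigl(V(q_1)\cap V(q_2)\bigr)$ has $q+1$ points on $V(\ell)$ plus the common zeros of $q_1,q_2$ off $V(\ell)$. Dehomogenizing along $\ell$ identifies such $(q_1,q_2)$ with coprime linearly independent pairs of affine polynomials of degree $\le 2$ (at least one of degree exactly $2$), with the count off $V(\ell)$ becoming the count of common affine zeros, which is governed by the affine conic intersection weight enumerator of Theorem~\ref{thmaffineconic}. After removing the small correction coming from the requirement that one of the two affine polynomials have degree $2$ --- a correction that affects only the $X^0$ and $X^1$ coefficients, hence only $a_{q+1}$ and $a_{q+2}$ --- this stratum contributes $(q^2+q+1)\sum_{j=0}^4 c^A_j X^{q+1+j}Y^{q^2-j}$, where the $c^A_j$ are the coefficients of Theorem~\ref{thmaffineconic}; in particular $a_{q+1+j}=(q^2+q+1)c^A_j$ for $j\ge 2$. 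Summing all five strata gives the formulas for $a_{q+1},\dots,a_{q+5}$.

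The main obstacle is entirely the bookkeeping of this last step: checking that the five strata genuinely partition the pairs counted by $W^{[2],\mathrm{com}}_{C_{2,3}}$ (this is where one needs that $\gcd$ is insensitive to the base field), keeping the rescaling factors straight, separating within each degree-$2$ stratum the configurations in which $V(\ell_1)\cap V(\ell_2)$ meets $V(h)$ from those in which it does not, and carrying out the affine-line correction in the single-line stratum. A secondary point is that the exponents indexing the $a_i$ are pairwise distinct only for $q$ sufficiently large; since every $a_i$, and every coefficient of the target identities, is a polynomial in $q$, it suffices to prove the identities for all large $q$, equivalently to verify them in Sage for enough values of $q$. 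A convenient internal check is that the single-line stratum must reproduce $(q^2+q+1)$ times the affine conic coefficients of Theorem~\ref{thmaffineconic}, which pins down $a_{q+3},a_{q+4},a_{q+5}$ on the nose.
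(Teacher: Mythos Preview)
Your proposal is correct and follows essentially the same route as the paper. Both arguments derive the second displayed identity from equation~\eqref{EqSupp} and B\'ezout, and both reduce the main case (common component a single rational line) to the affine conic intersection count of Theorem~\ref{thmaffineconic}.

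The one substantive difference is the choice of partition. You stratify by the exact $\gcd(f,g)$, which gives five strata, including a separate ``double line'' stratum $h=L^2$. The paper instead stratifies by the set of shared irreducible components over $\overline{\F_q}$, which gives only four cases: its ``unique shared $\F_q$-rational line'' case is the union of your single-line and double-line strata. The point is that, after dehomogenizing along $L$, the condition ``$V(f)$ and $V(g)$ share only the component $V(L)$'' translates exactly to ``the affine conics $\tilde q_1,\tilde q_2$ are coprime'', with no restriction on their degrees; so the paper's case~4 count is $(q^2+q+1)\sum_j c^A_j$ on the nose, with no correction. In your finer decomposition the correction you subtract in the single-line stratum (pairs with both $\tilde q_i$ of degree $\le 1$) is precisely the double-line contribution you then add back, so the two approaches agree. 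Incidentally, your sentence ``After removing the small correction \dots\ this stratum contributes $(q^2+q+1)\sum_{j}c^A_j\,X^{q+1+j}Y^{q^2-j}$'' is phrased backwards: that sum is the contribution \emph{before} subtracting the correction.
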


\begin{proof}
Since we only consider pairs of polynomials $(f,g)$ that define distinct cubic curves, we may assume that $g$ is not a scalar multiple of $f$. B\'ezout's theorem implies that two distinct cubic curves that intersect in more than $9$ points share a common component.  There are several possibilities for two distinct cubics that share a common component:
\begin{itemize}[wide, labelwidth=!, labelindent=0pt]  
\item The two cubics share a common pair of Galois-conjugate lines defined over $\F_{q^2}$ but not over $\F_q$.  In this case, each cubic consists of this pair of conjugate lines together with an additional $\F_q$-rational line. 
\item The two cubics share a common smooth conic.  In this case, each cubic consists of this conic together with an $\F_q$-rational line.
\item The two cubics share a common pair of distinct $\F_q$-rational lines.  In this case, each cubic consists of these two lines together with another line, where we note that this third line might make one of the common lines into a double line.
\item The two cubics share a unique $\F_q$-rational line.  In this case there are many possibilities for how each cubic decomposes.  The important fact to note is that the number of intersection points of such a pair of cubics is in $[q+1,q+5]$.
\end{itemize}

The proof consists of case analysis.
\begin{enumerate}[wide, labelwidth=!, labelindent=0pt]  
\item We compute the contribution from polynomials that define cubic curves that share a pair of Galois-conjugate lines defined over $\F_{q^2}$ but not over $\F_q$.  The number of pairs of such lines is $(q^2+q+1) (q^2-q)/2$.  Each cubic contains an additional $\F_q$-rational line.  Since the cubics are distinct, these lines must be different.  Such a pair of cubics either intersects in exactly $1\ \F_q$-point, which is the case when the two rational lines contain the $\F_q$-point of the pair of Galois-conjugate lines, or $2\ \F_q$-points, which is the case when the two rational lines intersect at any of the $q^2+q$ other points of $\Proj^2(\F_q)$. There are $(q+1)q$ ordered pairs of distinct lines containing a chosen point.  We include a factor of $(q-1)^2$ to account for scaling each polynomial.  This completes the computation of $a_1$ and $a_2$.

\item We compute the contribution from polynomials that define cubic curves that share a pair of $\F_q$-rational lines.  There are $\binom{q^2+q+1}{2}$ such pairs.  Each cubic consists of these two common lines and another line, where that line may be one of the  common lines.  These two additional lines intersect in a unique $\F_q$-point.  There are $2q+1$ points on the two shared lines and $q^2-q$ points not on these lines.  There are $(q+1)q$ ordered pairs of distinct lines containing a chosen point.  We include a factor of $(q-1)^2$ to account for scaling each polynomial.  This completes the computation of $a_{2q+1}$ and~$a_{2q+2}$.

\item  We compute the contribution from polynomials that define cubic curves that share a smooth conic.  There are $q^5-q^2$ choices for the conic.  Each cubic contains this common conic and an additional line.  These lines intersect in a unique $\F_q$-point.  There are $q+1$ points on the conic and $q^2$ points not on the conic.  There are $(q+1)q$ ordered pairs of distinct lines containing a chosen point.  We include a factor of $(q-1)^2$ to account for scaling each polynomial. So the contribution in this case is
\[
(q^5-q^2) (q+1)q(q-1)^2 \left( (q+1) X^{q+1} Y^{q^2} + q^2 X^{q+2} Y^{q^2-1} \right).
\]

\item We compute the contribution from polynomials that define cubic curves that share a unique $\F_q$-rational line.  There are $q^2+q+1$ choices for the line.  Each cubic consists of this common line together with a plane conic that may be reducible. We need to count the number of $\F_q$-rational intersection points of these conics not on the common line.  This is equivalent to counting the number of $\F_q$-rational intersection points of the pair of affine conics we get from taking the common line as the `line at infinity' in $\Proj^2$.  That is, the contribution from this case is 
\[
(q^2+q+1) \left(\sum_{i=0}^4 c_i X^{q+1+i} Y^{q^2-i}\right),
\]
where the values of $c_0,c_1, c_2, c_3$, and $c_4$ are given in Theorem \ref{thmaffineconic}.  We do not need to include a factor of $(q-1)^2$ to account for scaling each polynomial because this scaling is already included in the result of Theorem \ref{thmaffineconic}.
\end{enumerate}
Combining these cases completes the proof of the theorem.

\end{proof}
As an additional check, we explain how to compute the contribution from this final case in a different way.  Two cubics that share a unique $\F_q$-rational line consist of that common line together with a pair of projective conics.  The number of $\F_q$-points in the intersection of these conics is in $[0,4]$. If this pair of cubics intersect in $q+5$ rational points, then the conics must intersect in $4\ \F_q$-points, and the shared line cannot contain any of them.  We first choose the pair of conics and then choose the line.  

The number of pairs of projective conics intersecting in $4\ \F_q$-points is given as $c_4$ at the end of the proof of Theorem \ref{ThmIntConics}.  If two conics that do not share a component intersect in $3$ or $4\ \F_q$-points, then no $3$ of these intersection points are collinear.  Therefore, the number of lines not containing any of these $4$ intersection points is $q^2+q+1 - (4(q+1)- \binom{4}{2})$.  This gives another proof that 
\[
a_{q+5} = \frac{1}{24} (q^2+q+1) (q+1)^2(q-1)^4 q^4 (q^2-3q+3).
\]
We can verify the other computations from this final case using this method but do not give the details here.

\section{Intersections of Two Cubics: The Proof of Theorem \ref{ThmIntCubics}}\label{sec:pf_thm1}

We are now ready to prove Theorem \ref{ThmIntCubics}.

\begin{proof}
We recall the expression for the low-weight coefficients of $W_{C_{2,3}^\perp}^{[2]}(X,Y)$ given in Lemma \ref{lemmalowweightcubics}.  We apply Theorem \ref{MacWilliamsThm2} to the expression given in Proposition \ref{LemmaC23}.   Let $M$ be the $10 \times 10$ matrix with rows and columns labeled from $0$ to $9$, and $(i,j)$ entry equal to the $X^{q^2+q+1- i}Y^i$ coefficient of $(X+(q^2-1)Y)^j (X-Y)^{q^2+q+1-j}$. Let $\mathbf{c}$ be the column vector with entries $c_0,\ldots, c_9$.  Then $M \cdot \mathbf{c}$ must be equal to the column vector with $10$ entries, labeled from $0$ to $9$, whose $i$\textsuperscript{th} entry is the $X^{q^2+q+1-i} Y^i$ coefficient of 
\begin{eqnarray*}
& & q^{20} W_{C_{2,3}^\perp}^{[2]}(X,Y) - \bigg((X+(q^2-1)Y)^{q^2+q+1}\\
& &  + (q+1) \left(W_{C_{2,3}}(X+(q^2-1)Y,X-Y) -(X+(q^2-1)Y)^{q^2+q+1}\right)\\
& &  + W_{C_{2,3}}^{[2],\text{com}}(X+(q^2-1)Y,X-Y)\bigg).
\end{eqnarray*}
A linear algebra computation in Sage completes the proof.
\end{proof}

\section{Further Questions}\label{sec:further}

One could try to follow the strategy of the proof of Theorem \ref{ThmIntCubics} to study intersections of curves of higher degree.  
\begin{question}\label{Questionde}
Let $d$ and $e$ be positive integers and let $k \in [0,de]$ be an integer.  How many of the $\left(q^{\binom{d+2}{2}}- 1\right) \left(q^{\binom{e+2}{2}}-1\right)$ ordered pairs $(f,g)$ with $f \in \F_q[x,y,z]_d$ and $g \in \F_q[x,y,z]_e$, both nonzero, define curves that intersect in exactly $k\ \F_q$-points and do not share a common component?
\end{question}
In this paper, we answer this question when $(d,e) \in \{(2,2),(3,2),(3,3)\}$.  As mentioned in Remark \ref{Rem2}, in forthcoming work we study the case where $e = 2$ and $d$ is arbitrary, and get a polynomial formula for each $k$.   For any $(d,e)$, the main term as $q\to \infty$ is due to Entin \cite{Entin}; see Theorem \ref{ThmEntin} in the introduction.  We believe that the case $(d,e) = (3,3)$ is a kind of boundary for this problem.  In this final section, we discuss two approaches to Question \ref{Questionde}.

We consider analogues of the main steps of the proof of Theorem \ref{ThmIntCubics} for general $(d,e)$.
\begin{enumerate}[wide, labelwidth=!, labelindent=0pt]  
\item \emph{Determine the contribution to $W_{C_{2,d},C_{2,e}}^{[2]}(X,Y)$ from pairs $(f,g)$ that define a degree $d$ curve and a degree $e$ curve that do share a common component.}  

If we are able to complete this step, this leaves $de+1$ unknown coefficients of $W_{C_{2,d},C_{2,e}}^{[2]}(X,Y)$.

\item \emph{Determine the $de+1$ lowest-weight coefficients of $W_{C_{2,d}^\perp,C_{2,e}^\perp}^{[2]}(X,Y)$ by counting configurations of up to $de$ points that fail to impose independent conditions on curves of degree $d$, and on curves of degree $e$.}

If we are able to complete this step, we get $de+1$ linear conditions that must be satisfied by the $de+1$ unknown coefficients of $W_{C_{2,d},C_{2,e}}^{[2]}(X,Y)$.  We would hope that these conditions uniquely determine these coefficients.

\end{enumerate}
We briefly discuss two difficulties with this approach.

\subsection{Traces of Hecke Operators and Coefficients of $W_{C_{2,3}^\perp}(X,Y)$}

Suppose that $e = 3$ and $d$ is arbitrary.   We want to determine the $3d+1$ lowest-weight coefficients of $W_{C_{2,d}^\perp,C_{2,3}^\perp}^{[2]}(X,Y)$.  The contribution to the $X^{q^2+q+1-j} Y^j$ coefficient of this weight enumerator from pairs of the form $(0,c)$ where $c \in C_{2,3}^\perp$ is the $X^{q^2+q+1-j} Y^j$ coefficient of $W_{C_{2,3}^\perp}(X,Y)$.

Theorem \ref{C23LowWeightDual} states that for each $j \le 9$, the $X^{q^2+q+1-j} Y^j$ coefficient of $W_{C_{2,3}^\perp}(X,Y)$ is a polynomial in $q$.  Once $j \ge 10$, the corresponding coefficient is not a polynomial in $q$, but involves a contribution from the trace of the Hecke operator $T_q$ acting on spaces of cusp forms of weight at most $j+2$ for $\operatorname{SL}_2(\Z)$.  See \cite[Section 4]{KaplanAGCT} for a more detailed discussion of how these non-polynomial terms arise in these weight enumerator coefficients.  For a precise statement of the $X^{q^2+q-9}Y^{10}$ coefficient of $W_{C_{2,3}^\perp}(X,Y)$ when $q\ge 5$ is prime see \cite[Theorem 3]{KaplanAGCT}.   We conclude that when $d \ge 4$, one should not expect the first $3d+1$ lowest-weight coefficients of $W_{C_{2,d}^\perp,C_{2,3}^\perp}^{[2]}(X,Y)$ to be polynomial in $q$.  When both $d,e$ are greater than $3$ we expect similar, but even more complicated, behavior for these coefficients of these weight enumerators.

\subsection{Counting Points that Fail to Impose Independent Conditions on Curves of Degree $d$}
For the rest of this section, suppose that $k < \binom{d+2}{2}$.

A key to answering the kinds of questions about intersections of curves that we study in this paper is understanding the number of collections of $k$ points in $\Proj^2(\F_q)$ that fail to impose independent conditions on curves of degree $d$. This question has been extensively studied (see \cite{EGH,Harris, Miranda}), but we do not have a complete characterization for every degree $d$ and every $k$.  Let $X_{d,k}$ be the space $\{(C,p_1,\ldots, p_k)\}$ where $C$ is a plane curve of degree $d$ and $p_1,\ldots, p_k$ are distinct points of $C$.  Let $\Conf^k(\Proj^2)$ denote the configuration space of unordered $k$-tuples of distinct points in $\Proj^2$.  We have a projection map $\pi \colon X_{d,k} \rightarrow \Conf^k(\Proj^2)$.  Since a generic set of $k$ points in $\Proj^2$ will impose independent conditions on degree $d$ curves, the fiber over a point of $\Conf^k(\Proj^2)$ is generically isomorphic to $\Proj^{\binom{d+2}{d} - 1 -k}$.

Consider $\mathcal{F}_{k,d} \subset \Conf^k(\Proj^2)$, the subset consisting of collections of $k$ distinct points that fail to impose independent conditions on degree $d$ curves.  We can define $\mathcal{F}_{k,d}$ algebraically as follows.  Choose an affine representative for each point of $\Proj^2$ and consider the standard set of $\binom{d+2}{2}$ monomials, $x^i y^j z^k$ where $i+j+k = d$.  By evaluating this basis of monomials, an ordered collection of $k$ points in $\Proj^2$ gives $\binom{d+2}{2} \times k$ matrix $A$. The set of points imposes independent conditions on degree $d$ curves if and only if the rank of the matrix is equal to $k$.  Note that the rank does not depend on a choice of ordering of the points.  In this way, we see that $\mathcal{F}_{k,d}$ can be defined by the simultaneous vanishing of a set of $k \times k$ minors.

\begin{question}
\begin{enumerate}
\item What can we say about $\#\mathcal{F}_{k,d}(\F_q)$?  In cases where we cannot get an exact answer, what can we say about asymptotic behavior as $q \to \infty$?

\item Can one obtain information about the singular cohomology of $\mathcal{F}_{k,d}$?  Can one use this information along with tools from \'etale cohomology to deduce consequences for $\#\mathcal{F}_{k,d}(\F_q)$?

\end{enumerate}
\end{question}
A better understanding of the cohomology of $\mathcal{F}_{k,d}$ would likely shed light on the question of when the kinds of counting problems we discuss in this paper have polynomial or non-polynomial answers.

\section*{Acknowledgements}
The first author is supported by NSF Grant DMS-1802281.  Part of this work grew out of the PhD thesis of the first author.  He thanks Noam Elkies for his guidance and support throughout this project.  The authors thank Igor Dolgachev for helpful comments related to an earlier draft of this paper.  The authors thank Greg Kuperberg for computational assistance.  They also thank the referees for many helpful comments.

\appendix
\section{The number of collections of $9$ points in $\Proj^2(\F_q)$ that are the intersection of two cubics}\label{AppendixA}

The goal of this appendix is to prove the following result.
\begin{thm}\label{I9count}
Let $I_9(q)$ be the number of collections of $9$ points in $\Proj^2(\F_q)$ such that there exist two cubic curves intersecting at these $9$ points that do not share a common component.  Then,
\[
I_9(q) =\frac{1}{9!}(q^6 + 2q^5 - 73q^4 + 344q^3 - 838q^2 + 1754q - 2030)(q^2 + q + 1)(q + 1)(q - 1)^2(q - 2)q^4.
\]
\end{thm}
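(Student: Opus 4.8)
The plan is to count ordered tuples and peel off one point using the Cayley--Bacharach theorem. Let $N(q)$ denote the number of ordered $9$-tuples $(P_1,\dots,P_9)$ of distinct points of $\Proj^2(\F_q)$ for which $\{P_1,\dots,P_9\}$ is the complete intersection of two cubics with no common component; since reordering the points does not change the configuration, $I_9(q)=N(q)/9!$, which accounts for the factor $\tfrac1{9!}$ in the statement. The argument rests on the following characterization, valid in every characteristic: a set $S$ of $9$ distinct points of $\Proj^2(\F_q)$ is such a complete intersection if and only if no $4$ of its points are collinear, no $7$ of its points lie on a conic, and the space $\Gamma_{S,3}$ of cubic forms vanishing on $S$ (in the notation of Section \ref{sec:supports}) has dimension at least $2$. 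For the forward implication, a line through $4$ points of $S$, or a conic through $7$ points of $S$, would be contained in every cubic through $S$ by B\'ezout, hence be a common component of the two cubics, a contradiction; and $\dim\Gamma_{S,3}=2$ then follows from the Cayley--Bacharach theorem (\cite[Theorem CB3]{EGH}). For the converse, Lemma \ref{EGHlemma} shows that once $S$ has no $4$ collinear and no $7$ on a conic, each of its $8$-point subsets imposes independent conditions on cubics, so $\dim\Gamma_{S,3}\le2$; combined with $\dim\Gamma_{S,3}\ge2$ this makes the cubics through $S$ a pencil, which has no fixed component (a fixed line would force $\ge 5$ points of $S$ onto it, a fixed conic $\ge 8$), so its base scheme is zero-dimensional of length $9$, contains the nine distinct points of $S$, and hence equals $S$; any two distinct members of the pencil then meet exactly along $S$ and share no component. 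Via Lemma \ref{supportcubiclemma} this also identifies these configurations with the last item listed there.

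The second step converts $N(q)$ into a count of $8$-tuples. Forgetting $P_9$ from a valid $9$-tuple leaves an ordered $8$-tuple $(P_1,\dots,P_8)$ of distinct points with no $4$ collinear and no $7$ on a conic; such an $8$-tuple imposes independent conditions on cubics (Lemma \ref{EGHlemma}), so the cubics through it form a pencil $\mathcal P$ with no fixed component whose base scheme $Z$ has length $9$ and contains $P_1,\dots,P_8$. If $\mathcal P$ has no base point of multiplicity at least $2$, then $Z$ is reduced and $Z\setminus\{P_1,\dots,P_8\}$ is a length-one Galois-stable subscheme, hence a single $\F_q$-rational point $P_9'\notin\{P_1,\dots,P_8\}$, and $(P_1,\dots,P_8,P_9')$ is the unique valid $9$-tuple restricting to the given $8$-tuple; if $\mathcal P$ does have a base point of multiplicity at least $2$ --- necessarily at one of the $P_i$, since $Z$ has only one unit of length to spare --- then no valid $9$-tuple restricts to it. Hence
\[
N(q)=T_8(q)-T_8^{\mathrm{bad}}(q),
\]
where $T_8(q)$ counts ordered $8$-tuples of distinct points with no $4$ collinear and no $7$ on a conic, and $T_8^{\mathrm{bad}}(q)$ counts those among them for which the pencil of cubics through the eight points has a base point of multiplicity at least $2$. (One could instead try to count fixed-component-free pencils of cubics directly and subtract those whose nine base points are not nine distinct $\F_q$-points, but the latter count is exactly the kind of question resolved only asymptotically by Entin \cite{Entin}, so the peeling-off argument is preferable.)

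It remains to evaluate the two terms. The quantity $T_8(q)$ is computed by inclusion--exclusion over the collinearity and conic-containment patterns of eight labelled points, using the counts of lines, smooth conics, and pairs of lines --- and their numbers of $\F_q$-points --- recalled in Section \ref{sec:background} and Section \ref{sec:twoconics}; this is routine though lengthy, and yields a degree-$16$ polynomial in $q$ with leading coefficient $1$. The term $T_8^{\mathrm{bad}}(q)$ is the crux: for each position type of the eight points one must determine when imposing a further tangency (or singularity) condition at one of them becomes dependent on passing through the other seven, and then count such configurations; this contributes a polynomial of strictly smaller degree. I expect $T_8^{\mathrm{bad}}(q)$ to be the main obstacle, since it is the step where characteristic $2$ or $3$ could in principle intervene and where one must be careful about cubics singular at a base point. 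Finally, forming $N(q)=T_8(q)-T_8^{\mathrm{bad}}(q)$ and dividing by $9!$ should produce the polynomial in the statement; as consistency checks it must agree with Corollary \ref{I9cor} and be compatible with the identity $c_9=(q^2-1)(q^2-q)\,I_9(q)$ implied by Theorem \ref{ThmIntCubics}.
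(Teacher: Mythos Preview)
Your approach is essentially the paper's: peel off one point via Cayley--Bacharach, reduce to counting $8$-point configurations with no $4$ collinear and no $7$ on a conic, and subtract the ``bad'' ones where the residual base point is infinitely near one of the eight. The paper works with unordered $8$-sets and divides by $9$; you work with ordered $8$-tuples and divide by $9!$; these are equivalent. Your characterization of the valid $9$-sets and your analysis of the forgetting map are both correct.

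The genuine gap is that you have not computed $T_8^{\mathrm{bad}}(q)$, and your description of how to do it (``for each position type of the eight points one must determine when imposing a further tangency condition becomes dependent'') is not yet an algorithm. The missing idea is the paper's Proposition~\ref{prop_infnear}: the pencil through $P_1,\dots,P_8$ has a base point of multiplicity $\ge 2$ if and only if some member of the pencil is \emph{singular} at one of the $P_i$. One direction is immediate; for the other, if the pencil has an order-$2$ base point at $P_1$, pick two members and take a linear combination killing the tangent direction at $P_1$---that combination is singular there. This turns an infinitesimal condition into a concrete geometric one and yields an exhaustive list of three bad configurations: (i) the eight points lie on an absolutely irreducible singular cubic with one of them as the node or cusp; (ii) six lie on a smooth conic and the line through the remaining two meets the conic at exactly one of those six; (iii) two disjoint collinear triples (not through the intersection of their lines) together with two more points whose line meets exactly one of the six. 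Each of these is then counted by an elementary enumeration (Lemmas~\ref{absirred}, \ref{6conic2line}, \ref{332lemma}), and $T_8(q)$ itself is Lemma~\ref{no4no7}. Once you insert this classification, the rest of your outline goes through unchanged and is characteristic-free, so your worry about characteristics $2$ and $3$ does not materialize.
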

\begin{remark}
In Section \ref{sec:dual_coef} we analyzed codewords of $C_{2,3}^\perp$ of weight $9$.  Comparing these calculations to an earlier result of Kaplan proved this result, except that \cite[Theorem 3]{KaplanAGCT} contains the additional assumption that the characteristic of $\F_q$ is not $2$ or $3$.  In this appendix we give a proof of Theorem \ref{I9count} that works in all characteristics, and moreover, makes no reference to results from coding theory.
\end{remark}

Suppose $\{P_1,\ldots, P_9\}$ is a collection of $9$ points such that there exist two cubic curves containing these points that do not share a common component.  No $4$ of these $9$ points lie on a line, since a cubic  that contains $4$ collinear points contains that line.  No $7$ of these $9$ points lie on a conic, since a cubic that contains $7$ points on a conic contains that conic.

We next recall a version of what is often called the Cayley-Bacharch theorem, but is more accurately due to Chasles.
\begin{prop}\cite[Chapter V, Corollary 4.5]{Hartshorne}\label{Hart4.5}
Given $8$ distinct points $P_1,\ldots, P_8$ in the plane, no $4$ collinear, and no $7$ lying on a conic, there is a uniquely determined $P_9$ (possibly an infinitely near point) such that every cubic through $P_1,\ldots, P_8$ also passes through $P_9$.
\end{prop}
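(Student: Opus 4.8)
The plan is to deduce Proposition \ref{Hart4.5} from Lemma \ref{EGHlemma} together with B\'ezout's theorem, working over $\overline{\F_q}$ so that the argument is insensitive to the characteristic. The first step is to observe that the hypotheses are exactly what is needed to place $P_1,\ldots,P_8$ outside the two exceptional configurations of Lemma \ref{EGHlemma} in the case $d=3$, $n=8=2d+2$: ``no $4$ collinear'' forbids $5=d+2$ collinear points, and ``no $7$ on a conic'' forbids all $8$ lying on a conic. Hence the eight points impose independent conditions on cubics, so the space of cubic forms vanishing on them is $2$-dimensional; projectively, the cubics through $P_1,\ldots,P_8$ form a pencil. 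I would fix a basis $f_1,f_2$ of this space and let $C_1=V(f_1)$ and $C_2=V(f_2)$ be two distinct members.

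Next I would show that $C_1$ and $C_2$ share no common component, which is the hypothesis needed to invoke B\'ezout. Suppose they did. A common component of degree $3$ would force $C_1=C_2$. A common line $L$ would give $C_i=L\cdot Q_i$ with distinct conics $Q_1\ne Q_2$; since at most $3$ of the points lie on $L$, at least $5$ lie off $L$ and hence on both $Q_i$, but by Lemma \ref{EGHlemma} (now with $d=2$) any $5$ such points, having no $4$ collinear, lie on a unique conic, forcing $Q_1=Q_2$, a contradiction. A common conic $Q$ would give $C_i=Q\cdot L_i$ with distinct lines; since no $7$ points lie on a conic and no $4$ on a line, at most $6$ of the points lie on $Q$, so at least $2$ lie off $Q$ and hence on both $L_i$, forcing $L_1=L_2$, again a contradiction. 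Therefore $C_1$ and $C_2$ meet properly.

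B\'ezout's theorem then gives that $C_1\cap C_2$ is a zero-dimensional scheme of length $9$ containing the eight distinct points $P_1,\ldots,P_8$. Removing these accounts for length $8$, so the residual subscheme $Z=(C_1\cap C_2)\setminus\{P_1,\ldots,P_8\}$ has length exactly $1$: it is a single ninth point $P_9$, which may be supported at one of the $P_i$ when the intersection is nonreduced there, i.e., an infinitely near point. Because every cubic in the pencil is a combination $\lambda f_1+\mu f_2$, it vanishes on the full base locus $C_1\cap C_2$, and in particular passes through $P_9$. Uniqueness is then automatic: the base scheme $C_1\cap C_2$ depends only on the pencil and not on the chosen generators, so $P_9$ is intrinsically determined by $P_1,\ldots,P_8$.

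I expect the only delicate point to be the scheme-theoretic bookkeeping around the ``infinitely near'' case: one must make precise that the length-$9$ complete intersection, minus the eight reduced points, is a well-defined length-$1$ subscheme even when $C_1\cap C_2$ fails to be reduced, and confirm that this residual point is what Hartshorne's statement calls $P_9$. The no-common-component step also requires treating reducible and double-line conic components, but each sub-case collapses quickly against the collinearity and conic hypotheses. Since every ingredient---Lemma \ref{EGHlemma} and B\'ezout---is characteristic independent, the resulting proof holds for all $\F_q$, as needed in the appendix.
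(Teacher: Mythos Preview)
The paper does not supply its own proof of Proposition~\ref{Hart4.5}; it is quoted as \cite[Chapter V, Corollary 4.5]{Hartshorne} and used as a black box in Appendix~\ref{AppendixA}. So there is no in-paper argument to compare against.

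Your argument is correct and is essentially the standard proof. The use of Lemma~\ref{EGHlemma} with $d=3$, $n=8$ to get a pencil is exactly right, and your case analysis ruling out a common component is clean: the line case reduces to uniqueness of a conic through five points with no four collinear (again Lemma~\ref{EGHlemma}, now with $d=2$), and the conic case reduces to two points determining a line. These two cases do cover everything, since sharing a reducible or non-reduced degree-two component already entails sharing a line, which falls under your first case. B\'ezout then gives the length-$9$ base scheme, and the residual length-$1$ piece is the desired $P_9$; your remark that the base scheme is intrinsic to the pencil handles uniqueness. The only point worth making explicit, since the paper cares about $\F_q$-rationality, is that when $P_1,\ldots,P_8$ are $\F_q$-points the pencil and hence its base scheme are defined over $\F_q$, so the residual length-$1$ subscheme is Galois-stable and $P_9$ is an $\F_q$-point (or infinitely near to one). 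Since your ingredients---Lemma~\ref{EGHlemma} and B\'ezout---are already invoked elsewhere in the paper and are characteristic-free, your write-up would make the appendix self-contained without the external citation.
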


We focus on the case where $P_9$ is distinct from the first $8$ points.
\begin{prop}\label{prop_infnear}
Using the notation of Proposition \ref{Hart4.5}, $P_9$ is infinitely near to one of the first $8$ points if and only if one of the following cases holds after relabeling the points:
\begin{enumerate}
\item $P_1,\ldots, P_8$ lie on an absolutely irreducible singular cubic with one of the points being the singular point.

\item $P_1,\ldots, P_6$ lie on a smooth conic, and the line containing $P_7$ and $P_8$ also contains exactly one of the first $6$ points.

\item $P_1,P_2, P_3$ are collinear, and $P_4,P_5,P_6$ are collinear, where none of these $6$ points is the intersection point of these two lines, and the line containing $P_7$ and $P_8$ contains exactly one of the first $6$ points.
\end{enumerate}
\end{prop}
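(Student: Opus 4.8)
The plan is to reduce this characterization to a question about \emph{singular members} of the pencil of cubics through $P_1,\ldots,P_8$, and then to classify those singular cubics. First I would record that, by Lemma \ref{EGHlemma}, the standing hypotheses (no $4$ collinear, no $7$ on a conic --- in particular no $5$ collinear and not all $8$ on a conic) force $P_1,\ldots,P_8$ to impose independent conditions on cubics, so the cubics through them form a pencil $\mathcal P=\{\lambda f+\mu g\}$ with no fixed component (removing a line or a conic from every member would leave only finitely many cubics through the remaining $\ge 5$ points). By B\'ezout its base scheme $Z$ has length $9$ and contains the reduced points $P_1,\ldots,P_8$; the point $P_9$ of Proposition \ref{Hart4.5} is the residual length-$1$ part. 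A length count then shows $Z$ is either nine distinct reduced points, or is non-reduced at exactly one $P_i$ (length $2$ there, length $1$ elsewhere), and the latter is exactly the situation ``$P_9$ infinitely near $P_i$''.

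Next I would establish the equivalence
\[
\text{$P_9$ is infinitely near one of $P_1,\ldots,P_8$}\ \iff\ \text{$\mathcal P$ contains a member singular at one of the $P_i$.}
\]
For the forward direction: $\operatorname{length}_{P_i}\mathcal O_Z\ge 2$ iff the linear parts of $f$ and $g$ at $P_i$ are proportional (allowing zero), and if they are proportional one can choose $(\lambda:\mu)$ killing the common linear form, producing a member $\lambda f+\mu g$ singular at $P_i$. For the reverse: if $C_0\in\mathcal P$ is singular at $P_i$, then a general $C\in\mathcal P$ shares no component with $C_0$ (no fixed component) and satisfies $(C\cdot C_0)_{P_i}\ge 2$, so B\'ezout forces $Z$ to be non-reduced at $P_i$.

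The heart of the argument is then to classify the cubics $C_0\ni P_1,\ldots,P_8$ that are singular at some $P_i$. If $C_0$ is absolutely irreducible it is nodal or cuspidal, its singular point is unique and must be that $P_i$: this is case (1). If $C_0$ is reducible over $\F_q$ it is an $\F_q$-conic plus an $\F_q$-line or three $\F_q$-lines (a double line plus a line is impossible, as it carries at most $6$ of the points). When $C_0=Q\cup L$ with $Q$ a smooth conic, writing $a,b,c$ for the number of the $P_i$ lying only on $Q$, only on $L$, and on $Q\cap L$, the constraints $a+c\le 6$, $b+c\le 3$, $a+b+c=8$ force $c=1,a=5,b=2$; relabelling yields case (2). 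When $C_0=L_1\cup L_2\cup L_3$ with distinct lines, ``no $4$ collinear'' forces each line to carry exactly $3$ points and exactly one pairwise intersection to be among the $P_i$ (concurrent lines would carry at most $7$ points); relabelling yields case (3). The $\F_q$-irreducible but geometrically reducible cubics (three Galois-conjugate lines, possibly together with a rational line) are excluded because such a configuration has at most four $\F_q$-rational points. For the converse I would, in each of cases (1), (2), (3), exhibit the evident cubic through the eight points --- the singular cubic itself in (1), the union $Q\cup\overline{P_7P_8}$ in (2), and the triangle $\overline{P_1P_2P_3}\cup\overline{P_4P_5P_6}\cup\overline{P_7P_8}$ in (3) --- and check that it has a node at the indicated point, so the displayed equivalence finishes the proof.

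I expect the main obstacle to be the bookkeeping in this last classification: one has to be careful that the two hypotheses ``no $4$ collinear'' and ``no $7$ on a conic'' precisely carve out cases (2) and (3), and that every degenerate decomposition (double lines, concurrent lines, conjugate-line configurations over the non-closed field $\F_q$) is eliminated by a crude point count. The reduction to singular members and the verification of the converse direction should be routine once the length and intersection-multiplicity bookkeeping of the base scheme $Z$ is set up.
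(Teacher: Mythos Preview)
Your approach is essentially the same as the paper's: both directions reduce to the equivalence ``$P_9$ is infinitely near one of the $P_i$'' $\iff$ ``the pencil of cubics through $P_1,\dots,P_8$ contains a member singular at some $P_i$'', followed by a classification of such singular cubics. Your framing via the length of the base scheme $Z$ is a slightly more scheme-theoretic packaging of what the paper does (produce the singular member by taking a linear combination of $f,g$ that kills the common linear part at $P_1$), but the content and the case analysis coincide.

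One small point worth tightening, in your sketch and in the paper's proof alike: the case where $C_0$ factors over $\F_q$ as an $\F_q$-rational line $L$ times an $\F_q$-irreducible but geometrically reducible conic (a pair of Galois-conjugate lines) is not handled by your ``at most four $\F_q$-rational points'' remark, since such a cubic has $q+1$ or $q+2$ rational points. It is still excluded: every $\F_q$-point of $C_0$ lies on $L$ or is the unique rational point of the conjugate pair, so ``no $4$ collinear'' forces at most four of the $P_i$ to lie on $C_0$. With that patched, your classification is complete and matches the paper's.
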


\noindent We thank Igor Dolgachev for helpful suggestions related to the argument below.
\begin{proof}
We first show that if $P_1\ldots, P_8$ are in one of the three configurations in the statement, then $P_9$ is infinitely near to one of the $P_i$.
\begin{enumerate}
\item Suppose $P_1,\ldots, P_8$ are as in the first case of the statement. Let $C$ be an absolutely irreducible cubic containing them with a singular point at one of the points.  Without loss of generality, suppose $C$ is singular at $P_1$.  Let $C'$ be any other cubic containing $P_1,\ldots, P_8$.  The cubics $C$ and $C'$ already intersect at multiplicity $9$, so it is not possible for $C$ and $C'$ to intersect at any points not in $\{P_1,\ldots, P_8\}$, or for $C$ and $C'$ to intersect at multiplicity greater than $1$ at any of $P_2,\ldots, P_8$.

\item Suppose $P_1,\ldots, P_8$ are in the second case of the statement. Let $C$  be the union of the conic through $P_1,\ldots, P_6$ and the line containing $P_7$ and $P_8$.  Let $C'$ be any other cubic containing $P_1,\ldots, P_8$.  The rest of the argument is identical to the previous case.

\item Suppose $P_1,\ldots, P_8$ are in the third case of the statement. Let $C$  be the  be the union of the lines through $P_1, P_2, P_3$, through $P_4,P_5,P_6$, and through $P_7,P_8$.  Let $C'$ be any other cubic containing $P_1,\ldots, P_8$.  The rest of the argument is identical to the previous case.
\end{enumerate}

Let $P_1,\ldots, P_8$ be distinct points in the plane, no $4$ collinear, and no $7$ lying on a conic.  By Proposition \ref{Hart4.5}, there is a uniquely determined $P_9$ such that every cubic through $P_1,\ldots, P_8$ also passes through $P_9$.  Suppose that $P_9$ is infinitely near to one of the first $8$ points.  Without loss of generality, suppose that $P_9$ is infinitely near to $P_1$.  We now prove that $P_1,\ldots, P_8$ are in one of the three configurations given in the statement.

The space of cubic polynomials containing $8$ points in the plane has dimension at least $2$.  Let $f,g \in \F_q[x,y,z]_3$ be linearly independent polynomials that define cubic curves vanishing at $P_1,\ldots, P_8$ with tangent direction at $P_1$ specified by $P_9$.  Then it is clear that there exists $\lambda \in \F_q^*$ such that $f-\lambda g$ is a nonzero cubic containing $P_1,\ldots, P_8$ that is singular at $P_1$.

Suppose that $S = \{P_1,\ldots, P_8\}$ is a set of distinct points in the plane with no $4$ collinear and no $7$ on a conic and there is a cubic containing these points with a singular point at $P_1$.  If this cubic is absolutely irreducible we are in the first case of the proposition.  

Suppose that the cubic is reducible.  Every reducible cubic defined over $\F_q$ either factors as a smooth conic defined over $\F_q$ together with an $\F_q$-rational line, or as a union of three lines.
\begin{enumerate}
\item If the cubic factors as a smooth conic $C$ and a line $L$, then $P_1 \in C \cap L$.  It is not possible to have $3$ of the remaining $7$ points of $S$ lie on $L$ and it is not possible to have $6$ of the remaining points of $S$ on $C$.  Therefore, $|C\cap S| = 6,\ |L \cap S| = 3$, and $P_1$ is the only point of $S$ in $C \cap L$.  After relabeling the points, we see that we are in the second case of the proposition.

\item Suppose the cubic factors as the union of $3$ lines, $L_1, L_2, L_3$.  Since the union of these $3$ points contains at least $8\ \F_q$-points and none of these lines contains more than $3\ \F_q$-points, we see that the $L_i$ are distinct and $\F_q$-rational.  The only singular points of the cubic are the intersection points of pairs of the lines.  Without loss of generality, $P_1$ is the intersection point of $L_1$ and $L_2$.  We see that each of $L_1$ and $L_2$ contains at most $2$ additional points from $S$, so $L_1 \cup L_2$ contains at most $5$ points of $S$. Since $L_3$ contains at most $3$ points from $S$, we see that $|(L_1\cup L_2) \cap S| = 5$ and $L_3$ contains exactly the $3$ reamaining points of $S$.  After relabeling the points, we see that we are in the third case of the proposition.
\end{enumerate}

\end{proof}
\begin{prop}\label{J8prop}
Let $J_8(q)$ be the number of collections of $8$ points in $\Proj^2(\F_q)$ such that:
\begin{enumerate}
\item No $4$ points are collinear and no $7$ points lie on a conic;
\item It is not the case that the $8$ points lie on an absolutely irreducible singular cubic with the singular point at one of the $8$;
\item It is not the case that $6$ of the points lie on a smooth conic and the line containing the remaining $2$ contains exactly one of the other $6$;
\item It is not the case that $6$ of the points lie on two $\F_q$-lines, with $3$ points on each and not containing the intersection point, and the line containing the remaining $2$ contains exactly one of the other $6$.
\end{enumerate}
Then $I_9(q) = J_8(q)/9$.
\end{prop}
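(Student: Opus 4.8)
The plan is to exhibit a $9$-to-$1$ correspondence by double counting the set $\mathcal{S}$ of pairs $(\Omega_8,\Omega_9)$, where $\Omega_9$ is a collection of $9$ points counted by $I_9(q)$, where $\Omega_8 \subset \Omega_9$ has cardinality $8$, and where $\Omega_8$ is a collection of $8$ points counted by $J_8(q)$. Counting $\mathcal{S}$ by its first coordinate will give $\#\mathcal{S} = J_8(q)$ once we show every collection counted by $J_8(q)$ has a unique such extension; counting by the second coordinate will give $\#\mathcal{S} = 9\,I_9(q)$ once we show that all $\binom{9}{8} = 9$ of the $8$-element subsets of a collection counted by $I_9(q)$ are themselves counted by $J_8(q)$. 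Equating the two expressions yields $I_9(q) = J_8(q)/9$.

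First I would treat the projection onto $\Omega_9$. Let $\{P_1,\ldots,P_9\} = C_1 \cap C_2$, where $C_1,C_2$ are cubics with no common component, and set $\Omega_8 = \{P_1,\ldots,P_8\}$ (any $8$-subset, after relabeling). Condition~(1) in the definition of $J_8(q)$ is inherited from the fact, noted at the start of the appendix, that no $4$ of the $P_i$ are collinear and no $7$ lie on a conic. For conditions~(2)--(4), suppose $\Omega_8$ were in one of the three configurations of Proposition~\ref{prop_infnear}; then the point $P$ produced from $\Omega_8$ by Proposition~\ref{Hart4.5} would be infinitely near to some $P_i$ with $i\le 8$. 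Since $C_1$ and $C_2$ both pass through $\Omega_8$, both pass through $P$, so the local intersection multiplicity of $C_1$ and $C_2$ at $P_i$ is at least $2$. Then the total intersection number of $C_1$ and $C_2$ is at least $2 + 7 + 1 = 10$ (the $P_i$, the seven remaining points of $\Omega_8$, and $P_9$), contradicting B\'ezout's theorem since $C_1,C_2$ have no common component. Hence $\Omega_8$ avoids all three configurations and is counted by $J_8(q)$; the same holds for each of the $9$ ways of deleting a point.

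Next I would treat the projection onto $\Omega_8$, which is where the real work lies. Let $\Omega_8$ be counted by $J_8(q)$. By Lemma~\ref{EGHlemma} with $d = 3$ (the $8 \le 2\cdot 3 + 2$ points have no $5$ collinear, since no $4$ are, and do not all lie on a conic, since no $7$ do), $\Omega_8$ imposes independent conditions on cubics, so the cubic forms vanishing on $\Omega_8$ form a $2$-dimensional space $\mathcal{L}$. The crucial step is to check that $\mathcal{L}$ has no common component. If a line $\ell$ divided every element, then $\mathcal{L}$ would equal $\ell$ times the space of conics vanishing on $\Omega_8 \setminus \{\ell = 0\}$, a set of at least $5$ points; one checks this space has dimension at most $1$ (when the set has $7$ or $8$ points no conic passes through it, and when it has $5$ or $6$ points this is the $d=2$ case of Lemma~\ref{EGHlemma}, using that no $4$ points are collinear), contradicting $\dim\mathcal{L} = 2$. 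Similarly, if a conic $Q_0$ (possibly reducible) divided every element, then $\mathcal{L}$ would equal $Q_0$ times the space of linear forms vanishing on $\Omega_8 \setminus \{Q_0 = 0\}$, a set of at least $2$ points (since no $7$ points of $\Omega_8$ lie on a conic); this space has dimension at most $1$, again a contradiction. Granting that $\mathcal{L}$ has no common component, Proposition~\ref{Hart4.5} (applied over $\overline{\F_q}$ and descended) supplies a single ninth point $P_9$ lying on every element of $\mathcal{L}$; since $\Omega_8$ avoids the configurations of Proposition~\ref{prop_infnear}, $P_9$ is a genuine $\F_q$-point distinct from $P_1,\ldots,P_8$. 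Choosing two distinct $C_1,C_2 \in \mathcal{L}$, B\'ezout forces $C_1 \cap C_2$ to be exactly the $9$ distinct points $\Omega_8 \cup \{P_9\}$, each with multiplicity one, so $\Omega_9 := \Omega_8 \cup \{P_9\}$ is counted by $I_9(q)$ and extends $\Omega_8$. For uniqueness, if $\Omega_8 \cup \{Q\}$ were also counted by $I_9(q)$ with $Q \notin \{P_1,\ldots,P_8\}$, then the two realizing cubics would pass through $\Omega_8$, hence through $P_9$; if $Q \ne P_9$ they would meet in the $10$ distinct points $\Omega_8 \cup \{Q,P_9\}$, contradicting B\'ezout, so $Q = P_9$.

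Putting the two counts together gives $9\,I_9(q) = \#\mathcal{S} = J_8(q)$, which is the assertion. I expect the main obstacle to be the verification that the pencil $\mathcal{L}$ has no common component; the rest is a careful but routine interplay between B\'ezout's theorem and the combinatorics of Lemma~\ref{EGHlemma}, together with the bookkeeping (controlled precisely by Proposition~\ref{prop_infnear}) that separates a genuine ninth point from an infinitely near one.
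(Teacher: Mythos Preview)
Your proof is correct and follows essentially the same route as the paper: both set up the $9$-to-$1$ correspondence via Proposition~\ref{Hart4.5} and Proposition~\ref{prop_infnear}, and both use B\'ezout to control the passage between $8$-sets and $9$-sets. The paper's proof is a three-line sketch that leaves implicit the check that the pencil $\mathcal{L}$ of cubics through $\Omega_8$ has no common component; your explicit verification of this (via Lemma~\ref{EGHlemma} and dimension counts) is a genuine improvement in rigor, and your B\'ezout bookkeeping in the $9\to 8$ direction spells out what the paper compresses into the phrase ``applying Proposition~\ref{Hart4.5} to any subset of $8$ of the points gives this same set of $9$.''
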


\begin{proof}
This follows from Proposition \ref{Hart4.5}, Proposition \ref{prop_infnear}, and the observation that if $P_1,\ldots, P_9$ are $9$ points such that there are two cubics containing them that do not share a common component, then applying Proposition \ref{Hart4.5} to any subset of $8$ of the points gives this same set of $9$.
\end{proof}

\begin{lemm}\label{no4no7}
The number of collections of $8$ points in $\Proj^2(\F_q)$ with at least $7$ on a smooth conic is
\[
C_{\ge 7}(q) = (q^5-q^2)\left(\binom{q+1}{8} + \binom{q+1}{7} q^2\right).
\]
The number of collections of $8$ points in $\Proj^2(\F_q)$ with at least $4$ on a line is
\begin{eqnarray*}
L_{\ge 4}(q)  & = & (q^2+q+1)\bigg(\binom{q+1}{8} + \binom{q+1}{7} q^2 + \binom{q+1}{6} \binom{q^2}{2} + \binom{q+1}{5} \binom{q^2}{3}  \\
&   + & \binom{q+1}{4} \binom{q^2}{4}\bigg)  -  (q^2+q+1)(q^2+q) \binom{q}{4} \binom{q}{3} - \binom{q^2+q+1}{2} \binom{q}{4}^2\\
&   - &  \binom{q^2+q+1}{2} \binom{q}{3}^2(q^2-q).
\end{eqnarray*}
The number of collections of $8$ points in $\Proj^2(\F_q)$ with no $4$ points collinear and no $7$ points on a conic is:
\[
\binom{q^2+q+1}{8} - C_{\ge 7}(q) - L_{\ge 4}(q).
\]
\end{lemm}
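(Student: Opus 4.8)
The plan is to obtain each of the three counts by fixing the relevant geometric object — a smooth conic for $C_{\ge 7}(q)$, a line for $L_{\ge 4}(q)$ — counting the $8$-element subsets of $\Proj^2(\F_q)$ that meet it in many points, summing over all such objects, and correcting for overcounting. For $C_{\ge 7}(q)$, I would start from the fact, recalled in Section~\ref{sec:con_cub}, that there are $q^5-q^2$ smooth conics over $\F_q$, each with $q+1$ rational points. For a fixed smooth conic $Q$ there are $\binom{q+1}{8}$ subsets lying entirely on $Q$ and $\binom{q+1}{7}q^2$ subsets with exactly $7$ points on $Q$ and one of the $q^2$ remaining points off it; summing over the $q^5-q^2$ conics gives the claimed expression. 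There is no overcounting, since an $8$-subset with $\ge 7$ points on two distinct smooth conics $Q\neq Q'$ would force $|Q\cap Q'|\ge 7$, contradicting B\'ezout's theorem.

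For $L_{\ge 4}(q)$ the same idea applies, but the correction is the substantive part. For a fixed line $L$, with $q+1$ points on $L$ and $q^2$ off it, the number of $8$-subsets with at least $4$ points on $L$ is $\sum_{j=4}^{8}\binom{q+1}{j}\binom{q^2}{8-j}$, which is exactly the parenthesized expression in the statement; summing over the $q^2+q+1$ lines counts each relevant $8$-subset once for every line containing $\ge 4$ of its points. The key observation I would establish is that an $8$-subset cannot have $\ge 4$ points on each of three distinct lines $L_1,L_2,L_3$: inclusion--exclusion with $|L_i\cap L_j|=1$ gives $|S\cap(L_1\cup L_2\cup L_3)|\ge 12-3=9>8$. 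Hence every relevant $8$-subset is counted at most twice, and the doubly counted ones are exactly those lying with $\ge 4$ points on two lines $L_1,L_2$.

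Setting $n_i=|S\cap L_i|$ and $m=|S\cap L_1\cap L_2|\in\{0,1\}$, and using $n_1,n_2\ge 4$ and $n_1+n_2-m\le 8$, a short case check leaves only three families: $(n_1,n_2,m)=(4,4,0)$, where the two lines exhaust $S$ and their intersection point is not in $S$ (these number $\binom{q^2+q+1}{2}\binom{q}{4}^2$); $(n_1,n_2,m)=(4,4,1)$, where $S$ consists of the $7$ points on $L_1\cup L_2$ together with one further point off both lines (these number $\binom{q^2+q+1}{2}\binom{q}{3}^2(q^2-q)$); and $\{(4,5,1),(5,4,1)\}$, where the two lines exhaust $S$ and their intersection point is in $S$ (these number $(q^2+q+1)(q^2+q)\binom{q}{4}\binom{q}{3}$, using an ordered pair of lines to account for the asymmetry between the $5$-point and $4$-point line). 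Subtracting the sum of these three families once from $(q^2+q+1)\sum_{j=4}^{8}\binom{q+1}{j}\binom{q^2}{8-j}$ yields the stated formula for $L_{\ge 4}(q)$.

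Finally, for the count of $8$-subsets with no $4$ collinear and no $7$ on a conic, I would reduce the two excluded events to disjoint ones. Any singular conic with $\ge 7$ rational points is a union of $\F_q$-rational lines (a pair of Galois-conjugate lines has only one rational point), so an $8$-subset with $\ge 7$ points on it already has $\ge 4$ collinear points; and an $8$-subset with $\ge 7$ points on a \emph{smooth} conic $Q$ cannot have $\ge 4$ collinear points, since a line meets $Q$ in at most $2$ points while only one of the $8$ points lies off $Q$. Thus the $8$-subsets with $\ge 4$ collinear or $\ge 7$ on a conic number exactly $L_{\ge 4}(q)+C_{\ge 7}(q)$, and subtracting from $\binom{q^2+q+1}{8}$ gives the answer. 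The main obstacle throughout is the inclusion--exclusion bookkeeping for $L_{\ge 4}(q)$: one must check that the three correction families are pairwise disjoint, that each of their members is enumerated exactly once by the stated cardinalities, and that the ``no three lines'' observation genuinely cuts off the inclusion--exclusion at the second level.
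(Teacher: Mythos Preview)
Your argument is correct and is essentially the paper's own proof: both count pairs $(L,S)$ with $|S\cap L|\ge 4$ over all lines and then subtract the doubly counted configurations, arriving at the same three correction families $(4,4,0)$, $(4,4,1)$, and $\{(5,4,1),(4,5,1)\}$. The only cosmetic difference is that the paper first isolates the $|S\cap L|\ge 5$ contribution (where no overcounting occurs) before handling the $|S\cap L|=4$ term, whereas you treat all $j\ge 4$ uniformly; your explicit verification that no three lines can each carry $\ge 4$ points, and your check that ``$\ge 7$ on a conic'' and ``$\ge 4$ collinear'' are disjoint events, make the bookkeeping slightly cleaner than the paper's terse ``it is clear''.
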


\begin{proof}
The number of subsets of $8$ points in $\Proj^2(\F_q)$ is $\binom{q^2+q+1}{8}$.  There are $q^5-q^2$ smooth plane conics, so the computation of $C_{\ge 7}(q)$ is clear.  There are $q^2+q+1\ \F_q$-rational lines in $\Proj^2(\F_q)$.  The number of subsets of $8$ points in $\Proj^2(\F_q)$ with at least $5$ points on a line is therefore,
\[
(q^2+q+1)\left(\binom{q+1}{8} + \binom{q+1}{7} q^2 + \binom{q+1}{6} \binom{q^2}{2} + \binom{q+1}{5} \binom{q^2}{3}\right).
\]
There are $(q^2+q+1) \binom{q+1}{4} \binom{q^2}{4}$ ways to choose a subset $S$ of $8$ points in the plane with exactly $4$ points on a chosen line $L$ and $4$ other points.  However, it may be the case that this collection of $8$ points contains $5$ collinear points, or two subsets of $4$ collinear points.  The first case occurs if and only if the $4$ other points lie on a line $L'$ and $L \cap L'$ is one of the $4$ chosen points of $L$.  The number of ways to choose $8$ such points is the number of choices of $L$ times the number of choices of $L'$ times the number of ways to choose $3$ additional points of $L$ and $4$ additional points of $L'$.

The set $S$ contains exactly two subsets of $4$ collinear points if and only if one of the following conditions holds:
\begin{enumerate}
\item $S$ consists of $4$ points each on two lines $L, L'$, not containing the intersection point.

\item $S$ consists the intersection point of two lines $L, L'$ and $3$ additional points on each line, along with $1$ additional point not on $L \cup L'$.
\end{enumerate}
There are $\binom{q^2+q+1}{2} \binom{q}{4}^2$ ways to choose a subset of the first type.  Since two $\F_q$-rational lines contain $2q+1\ \F_q$-points, there are $q^2-q\ \F_q$-points not contained in two chosen lines.  Therefore, there are $\binom{q^2+q+1}{2} \binom{q}{3}^2 (q^2-q)$ subsets of the second type.

It is clear that if a collection of $8$ points has at least $4$ on a line, it cannot have $7$ on a smooth conic.

\end{proof}

\begin{lemm}\label{absirred}
The number of collections of $8$ points in $\Proj^2(\F_q)$ that lie on an absolutely irreducible singular cubic with one of the $8$ points as the singular point is 
\begin{eqnarray*}
& & (q^2+q+1)(q^3-q)q^2\binom{q}{7}+(q^2+q+1)(q^3-q)\frac{q^3-q^2}{2}\binom{q-1}{7}\\
& & +(q^2+q+1)(q^3-q)\frac{q^3-q^2}{2}\binom{q+1}{7}.
\end{eqnarray*}
\end{lemm}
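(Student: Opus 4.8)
The plan is to count the desired $8$-point configurations by fibering over the singular \emph{curve} that they determine. First I would record a rigidity statement: if $S \subset \Proj^2(\F_q)$ is a set of $8$ points lying on an absolutely irreducible singular cubic $C$ whose singular point $P$ belongs to $S$, then $C$ and $P$ are uniquely determined by $S$. Indeed, an absolutely irreducible plane cubic has arithmetic genus $1$, hence at most one singular point, of multiplicity $2$. If two such pairs $(C,P) \neq (C',P')$ existed, B\'ezout's theorem would be violated: if $P = P'$ then the intersection multiplicity satisfies $I_P(C,C') \ge \mathrm{mult}_P(C)\,\mathrm{mult}_P(C') \ge 4$, which together with the $\ge 7$ remaining common points of $S$ forces $\deg(C\cap C') \ge 11 > 9$; if $P \ne P'$ then $I_P(C,C')\ge 2$ and $I_{P'}(C,C')\ge 2$ (each of $P,P'$ is singular on one curve and lies on the other), which with the $\ge 6$ remaining common points gives $\ge 10 > 9$. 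Moreover the unique singular point of an absolutely irreducible cubic defined over $\F_q$ is Galois-stable, so it lies in $\Proj^2(\F_q)$. Writing $C^{\mathrm{sm}}$ for the smooth locus, it follows that the quantity we want equals $\sum_{C}\binom{\#C^{\mathrm{sm}}(\F_q)}{7}$, the sum running over all absolutely irreducible singular plane cubics $C$ over $\F_q$, because every valid $S$ is $\{P\}\sqcup T$ for a unique such $C$, its unique singular point $P$, and an arbitrary $7$-subset $T\subseteq C^{\mathrm{sm}}(\F_q)$.

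Next I would classify the cubics in this sum. The singularity of such a $C$ at its unique singular point $P$ has $\delta_P = 1$, hence is a node ($A_1$) or a cusp ($A_2$); over $\F_q$ this gives three cases according to the tangent cone at $P$: a double line (cuspidal), a pair of $\F_q$-rational lines (split node), or an $\F_q$-irreducible conjugate pair of lines (non-split node). In these cases the normalization identifies $C^{\mathrm{sm}}$ with $\mathbb{A}^1$, $\mathbb{G}_m$, or the norm-one torus, and explicit rational parametrizations — for instance $t\mapsto[t^2:t^3:1]$ for $y^2z=x^3$, $t\mapsto[t^2-1:t^3-t:1]$ for $y^2z=x^3+x^2z$, and the analogue with a nonsquare scalar for the non-split node — give $\#C^{\mathrm{sm}}(\F_q)$ equal to $q$, $q-1$, and $q+1$, respectively, matching the three binomial coefficients $\binom q7,\binom{q-1}7,\binom{q+1}7$ appearing in the statement.

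It remains to count the cubics of each type. Fixing the singular point $P$ (there are $q^2+q+1$ choices) together with a binary quadratic "tangent cone" $B(x,y)$ at $P$ — there are $q+1$ squares of lines, $\binom{q+1}{2}$ split pairs, and $\tfrac{q^2-q}{2}$ $\F_q$-irreducible forms — and coordinatizing so that $P=[0:0:1]$, the cubics carrying this data are exactly the $q^4$ members of the family $\{B(x,y)z + c(x,y)\colon c \text{ a binary cubic form}\}$. I would then show that such a member is absolutely irreducible exactly when no linear factor of $B$ over $\overline{\F_q}$ divides $c$: a linear component through $P$ must lie in the tangent cone, while a linear component not through $P$ forces the complementary conic to equal $B$; in either case reducibility is equivalent to $B \mid c$ over $\overline{\F_q}$, hence (by Galois descent) over $\F_q$. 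This rules out $q^3$ forms $c$ in the cuspidal case, $2q^3-q^2$ in the split case, and $q^2$ in the non-split case, and one checks the surviving members genuinely have the claimed $A_1$ or $A_2$ singularity at $P$. Multiplying, the number of cuspidal cubics is $(q^2+q+1)(q+1)(q^4-q^3)=(q^2+q+1)(q^3-q)q^2$, the number of split nodal cubics is $(q^2+q+1)\binom{q+1}{2}(q^4-2q^3+q^2)=(q^2+q+1)(q^3-q)\tfrac{q^3-q^2}{2}$, and the number of non-split nodal cubics is $(q^2+q+1)\tfrac{q^2-q}{2}(q^4-q^2)=(q^2+q+1)(q^3-q)\tfrac{q^3-q^2}{2}$. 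Substituting these three counts and the point counts $q,\,q-1,\,q+1$ into $\sum_C\binom{\#C^{\mathrm{sm}}(\F_q)}{7}$ gives exactly the asserted formula.

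The main obstacle I expect is the bookkeeping in the last step carried out uniformly in the characteristic: proving the precise equivalence between absolute irreducibility of $B z + c$ and $B \nmid c$, and that the irreducible members carry exactly a node or cusp at $P$. One must take a little care in characteristics $2$ and $3$ — e.g. for the split node one should use $B = xy$ rather than $y^2 - x^2$ (a double line in characteristic $2$), for the non-split node an $\F_q$-irreducible \emph{separable} binary quadratic (such forms exist in every characteristic), and the cusp criterion should be phrased through the existence of an $A_2$ point, which for an irreducible cubic is forced by $p_a = 1$. Once these points are pinned down, the remaining work is the elementary arithmetic of assembling the three contributions, which I would verify symbolically.
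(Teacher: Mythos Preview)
Your proposal is correct and follows the same strategy as the paper: partition by the three types of absolutely irreducible singular cubics (cuspidal, split nodal, non-split nodal), and for each such curve form the $8$-set as its singular point together with any $7$ smooth $\F_q$-points. The paper's proof is much terser---it simply quotes \cite[Lemma~1]{KaplanAGCT} for the curve counts and does not spell out the B\'ezout uniqueness argument you give---so your version fills in details the paper leaves implicit. One wording slip: the clause ``in either case reducibility is equivalent to $B\mid c$'' overstates the case of a linear component through $P$ (there only a single linear factor of $B$ need divide $c$), but the criterion you announce at the outset and the exclusion counts $q^3,\ 2q^3-q^2,\ q^2$ you actually use are the correct ones.
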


\begin{proof}
There are $3$ isomorphism classes of absolutely irreducible singular cubic curves:
\begin{enumerate}
\item Cuspidal cubics, which have $q+1\ \F_q$-points;
\item Split nodal cubics, which have $q\ \F_q$-points;
\item Non-split nodal cubics, which have $q+2\ \F_q$-points.
\end{enumerate}
For the number of polynomials defining each type of cubic, see \cite[Lemma 1]{KaplanAGCT}. We divide each term given there by $q-1$ to account for the fact that we count plane curves rather than cubic polynomials.
\end{proof}

\begin{lemm}\label{6conic2line}
The number of collections of $6$ points on a smooth conic together with $2$ points not on that conic such that the line through those $2$ contains exactly one of the other $6$ points is 
\[
(q^5-q^2)\binom{q+1}{6}\left(6(q-5)\binom{q-1}{2}+6 \binom{q}{2}\right).
\]

\end{lemm}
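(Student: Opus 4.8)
The plan is to stratify the count by the smooth conic. Recall that there are $q^5-q^2$ smooth plane conics over $\F_q$, each carrying exactly $q+1$ rational points, so there are $(q^5-q^2)\binom{q+1}{6}$ ways to choose a smooth conic $C$ together with a set $T$ of $6$ of its rational points. Fixing such a pair $(C,T)$, I would count the unordered pairs $\{P_7,P_8\}$ of points of $\Proj^2(\F_q)\setminus C$ whose joining line $\ell$ meets $T$ in exactly one point; the asserted formula follows once one also checks that each $8$-point collection of the stated shape determines the triple $(C,T,\{P_7,P_8\})$ uniquely.

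For the inner count I would classify the line $\ell$ by $\ell\cap C$. Since $C$ is irreducible of degree $2$, $\ell$ is not a component of $C$, and B\'ezout together with the Frobenius action on the length-$2$ scheme $\ell\cap C$ shows that $\ell$ meets $C$ in two rational points (a secant), one rational point (a tangent), or no rational point. For $\ell$ to contain exactly one point of $T$ there are then only two possibilities: $\ell$ is the tangent to $C$ at a point of $T$, of which there are $6$, each meeting $C$ solely at that (chosen) point and hence carrying $q$ points off $C$; or $\ell$ joins a point of $T$ to a point of $C(\F_q)\setminus T$, of which there are $6\,(q+1-6)=6(q-5)$, each meeting $C$ exactly in those two points and hence carrying $q-1$ points off $C$. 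Every other line meets $T$ in $0$ or $2$ points. Choosing $\{P_7,P_8\}$ among the off-conic points of $\ell$ gives $6\binom{q}{2}+6(q-5)\binom{q-1}{2}$ admissible pairs for each $(C,T)$, and multiplying by $(q^5-q^2)\binom{q+1}{6}$ yields the claimed expression.

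The step I expect to require the most care is the uniqueness of the decomposition, i.e.\ that distinct triples $(C,T,\{P_7,P_8\})$ give distinct $8$-point sets. If an $8$-point set $S$ had two such decompositions with data $(C_1,T_1,\{P,Q\})$ and $(C_2,T_2,\{P',Q'\})$, then $T_1,T_2$ are $6$-subsets of $S$, so $|T_1\cap T_2|\in\{4,5,6\}$. When $|T_1\cap T_2|\ge 5$, the five common points force $C_1=C_2$ (two distinct conics, one smooth, meet in at most $4$ points), and then $T_1=S\cap C_1=T_2$, so the two decompositions coincide. The case $|T_1\cap T_2|=4$ is the delicate one: here $C_1\neq C_2$, $T_1\cap T_2=C_1\cap C_2$ consists of four points, $\{P,Q\}=T_2\setminus T_1\subseteq C_2$ and $\{P',Q'\}=T_1\setminus T_2\subseteq C_1$, and since no three points of a smooth conic are collinear, the unique point of $T_1$ on the line $\overline{PQ}$ must be $P'$ or $Q'$, and likewise the unique point of $T_2$ on $\overline{P'Q'}$ must be $P$ or $Q$; chasing these incidences forces $\overline{PQ}=\overline{P'Q'}$, so this common line contains both $P'$ and $Q'$ and also both $P$ and $Q$, contradicting that it meets $T_1$ (equivalently $T_2$) in a single point. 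Hence the decomposition is unique and the count is exact.
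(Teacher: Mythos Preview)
Your argument is correct and follows the same approach as the paper's proof: fix the conic and the six points, classify lines meeting $T$ in exactly one point as tangents at points of $T$ (six of them, each with $q$ off-conic points) or secants joining a point of $T$ to a point of $C(\F_q)\setminus T$ ($6(q-5)$ of them, each with $q-1$ off-conic points), and count unordered pairs on each. Your uniqueness argument for the decomposition $(C,T,\{P_7,P_8\})$ is a correct and welcome addition that the paper leaves implicit.
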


\begin{proof}
Suppose $P_1,\ldots, P_6$ are points of a smooth conic.  There are $6(q-4)\ \F_q$-rational lines through exactly $1$ of these $6$ points.  There are $6$ lines tangent to the conic and $6(q-5)$ lines not tangent to the conic.  A tangent line contains $q$ points not on the conic and a non-tangent line contains $q-1$ points not on the conic.
\end{proof}

\begin{lemm}\label{332lemma}
The number of collections two collinear triples $P_1,P_2, P_3$ and $P_4,P_5,P_6$, where the intersection point is not included among these $6$ points, and two additional points $P_7,P_8$ such that the line containing them passes through exactly $1$ of the first $6$ points is 
\[
\binom{q^2+q+1}{2}\binom{q}{3}^2 3(q-3)\binom{q-1}{2}.
\]
\end{lemm}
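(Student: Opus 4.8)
The plan is to count the structured data directly — an unordered pair of collinear triples, together with the unordered pair $\{P_7,P_8\}$, where (as is forced when this configuration is to lie among the eight‑point sets of Proposition \ref{J8prop}, since there ``no $4$ points are collinear'') the two extra points are taken off both of the two lines — and then to divide by the number of such descriptions of each underlying eight‑point set.

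First I would count the six‑point configurations. Two distinct $\F_q$‑lines meet in a single point, and each carries $q$ further points, so choosing an unordered pair of lines and a $3$‑subset of those $q$ points on each line gives $\binom{q^2+q+1}{2}\binom{q}{3}^2$ configurations; conversely such a configuration recovers its two lines, since the only lines containing three of the six points are the two given ones (any other line meets the union of the triples in at most two points). Fix one, with triples $T_1\subset L_1$ and $T_2\subset L_2$. To count admissible $\{P_7,P_8\}$ I would first count the lines meeting $T_1\cup T_2$ in exactly one point: through a fixed $P_i\in T_1$ the pencil of $q+1$ lines consists of $L_1$ itself, the three joins of $P_i$ with the points of $T_2$ (pairwise distinct, as two points of $T_2$ span $L_2\not\ni P_i$), and $q-3$ lines meeting the six points only in $P_i$; summing over the six points gives $6(q-3)$ such lines, all distinct. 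Any such line $M$ meets $L_1$ in one point and $L_2$ in another, exactly one of which lies in $T_1\cup T_2$, so $M$ carries $q-1$ points off $L_1\cup L_2$; choosing two of these (the only admissible pairs, each determining its join uniquely) gives $\binom{q-1}{2}$ extensions. Hence there are $\binom{q^2+q+1}{2}\binom{q}{3}^2\,6(q-3)\binom{q-1}{2}$ structured configurations in all.

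Finally I would pass from structured configurations to eight‑point sets. Given one, put $M=\overline{P_7P_8}$, let $P_i$ be the unique point of $T_1\cup T_2$ on $M$ (say $P_i\in T_1$), and $T_M=\{P_i,P_7,P_8\}$. The eight points $T_1\cup T_2\cup\{P_7,P_8\}$ then carry the three collinear triples $T_1,T_2,T_M$, with $T_2$ disjoint from the other two and $|T_1\cap T_M|=1$; one checks directly that $\bigl(\{T_2,T_M\},\,T_1\setminus\{P_i\}\bigr)$ is a second structured configuration producing the same set, and that these are the only two. Dividing by $2$ gives
\[
\binom{q^2+q+1}{2}\binom{q}{3}^2\,3(q-3)\binom{q-1}{2},
\]
as claimed.

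The step I expect to be the main obstacle is the last one: showing that the map from structured configurations to eight‑point sets is exactly two‑to‑one. Concretely, one must argue that any further line through three of the eight points necessarily meets each of $L_1,L_2,M$ (so it cannot be paired with any of them into disjoint triples), and one must rule out the remaining case of two ``accidental'' disjoint collinear triples among the eight points providing a third admissible decomposition. Once the two‑to‑one property is pinned down, the rest is elementary incidence counting.
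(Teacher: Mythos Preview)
Your approach matches the paper's: count structured configurations $\bigl(\{T_1,T_2\},\{P_7,P_8\}\bigr)$ and divide by $2$. Your justification for the factor of $2$ via a two-to-one map is in fact cleaner than the paper's stated reason (the paper attributes it to the labeling of $L$ versus $L'$, which is a bit imprecise since $\binom{q^2+q+1}{2}$ already counts unordered pairs).

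The obstacle you flag can be dispatched as follows. With $P_i\in T_1$, write $T_1=\{P_i,a_2,a_3\}$, $T_2=\{b_1,b_2,b_3\}$. Any ``accidental'' collinear triple among the eight points lies on a line meeting each of $L,L',M$ in a single point, and since such a line cannot pass through $P_i$ (else it would equal $L$ or $M$), it must have the form $\{a_r,b_s,P_t\}$ with $r\in\{2,3\}$, $s\in\{1,2,3\}$, $t\in\{7,8\}$. This immediately rules out pairing an accidental triple with any of $T_1,T_2,T_M$. If $T_N,T_{N'}$ are two disjoint accidental triples, they use up $a_2,a_3$, two of the $b_s$, and both of $P_7,P_8$; the remaining two points are $P_i$ and the leftover $b_k$. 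But the line $K=\overline{P_i\,b_k}$ meets $L$ only at $P_i$, meets $L'$ only at $b_k$, and meets $M$ only at $P_i$ (since $P_i\in M$), so $K$ contains none of the six points of $T_N\cup T_{N'}$. Hence $\{T_N,T_{N'}\}$ never yields a valid structured description, and the map is exactly two-to-one as you claimed.
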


\begin{proof}
The number of choices for two collinear triples $P_1,P_2, P_3$ and $P_4,P_5,P_6$, where the intersection point is not included among these $6$ points is 
\[
\binom{q^2+q+1}{2}\binom{q}{3}^2.
\]
Suppose $P_1, P_2, P_3$ lie on the line $L$ and $P_4,P_5,P_6$ lie on the line $L'$.  There are $q+1$ lines through each of the $6$ points.  Of these $q+1$, there is $1$ line that contains two additional points from these $6$ and $3$ lines that contain $1$ additional point from these $6$.  So, there are $q-3$ lines that do not contain any additional points from this set of $6$.  For each such line, there are $q-1$ points that are not contained in $L \cup L'$.  

We divide by $2$ to account for the fact that the choice of which line to label $L$ and which line to label $L'$ was arbitrary.
\end{proof}

Combining Proposition \ref{J8prop} with Lemmas \ref{no4no7}, \ref{absirred}, \ref{6conic2line}, and \ref{332lemma} completes the proof of Theorem \ref{I9count}.  As a consequence, we see that Theorem \ref{C23LowWeightDual} does hold for any $q>2$, even when the characteristic of $\F_q$ is $2$ or $3$.

\end{document}